\begin{document}
\title[An isomorphism theorem for parabolic problems]{An isomorphism theorem for parabolic problems in H\"ormander spaces and its applications}

\author[V.~Los, V.~Mikhailets, A.~Murach]
{Valerii Los, Vladimir A. Mikhailets, Aleksandr A. Murach}  

\address{Valerii Los \newline
National Technical University of Ukraine\ "Kyiv Polytechnic Institute";
\newline
Chernihiv National Technological University, Ukraine}
\email{v$\underline{\phantom{k}}$\,los@yahoo.com}

\address{Vladimir A. Mikhailets \newline
Institute of Mathematics, National Academy of Sciences of Ukraine;
\newline
National Technical University of Ukraine\ "Kyiv Polytechnic Institute"}
\email{mikhailets@imath.kiev.ua}

\address{Aleksandr A. Murach \newline
Institute of Mathematics, National Academy of Sciences of Ukraine;
\newline
Chernihiv National Pedagogical University, Ukraine}
\email{murach@imath.kiev.ua}

\subjclass[2000]{35K35, 46B70, 46E35}
\keywords{Parabolic initial-boundary value problem; H\"ormander space;
\hfill\break\indent slowly varying function; isomorphism property; interpolation with a function parameter}

\begin{abstract}
We investigate a general parabolic initial-boundary value problem with zero Cauchy data in some anisotropic H\"ormander inner product spaces. We prove that the operators corresponding to this problem are isomorphisms between appropriate H\"ormander spaces. As an application of this result, we establish a theorem on the local increase in regularity of solutions to the problem. We also obtain new sufficient conditions under which the generalized derivatives, of a given order, of the solutions should be continuous.
\end{abstract}

\maketitle
\numberwithin{equation}{section}
\newtheorem{theorem}{Theorem}[section]
\newtheorem{lemma}[theorem]{Lemma}
\newtheorem{definition}[theorem]{Definition}
\newtheorem{proposition}[theorem]{Proposition}
\newtheorem{remark}[theorem]{Remark}
\newtheorem{condition}[theorem]{Condition}
\newtheorem{corollary}[theorem]{Corollary}

\allowdisplaybreaks

\section{Introduction}\label{9sec1}

General linear parabolic initial-boundary value problems has been investigated completely enough on the classical scales of H\"older--Zygmund spaces and Sobolev spaces \cite{AgranovichVishik64, Eidelman69, Eidelman94, EidelmanZhitarashu98, Friedman64, LadyzhenskajaSolonnikovUraltzeva67, LionsMagenes72ii}. The central result of the theory of these problems states that they are well posed by Hadamard in appropriate pairs of the function spaces belonging to these scales; in other words, the operators corresponding to the parabolic problems are isomorphisms on these pairs. This fact has important applications to the investigation of the regularity of solutions to parabolic problems, to the study of properties of Green functions of these problems, to the optimal control problems and others.

However, for some applications to differential equations, the H\"older--Zygmund and Sobolev scales are not calibrated finely enough by number parameters \cite{Hormander63, Hormander83, MikhailetsMurach14}. In this connection, L.~H\"ormander \cite[Sect.~2.2]{Hormander63} introduced and investigated normed function spaces for which a function parameter, not a number, serves as an index of regularity of functions or distributions. L.~H\"ormander \cite{Hormander63, Hormander83} gave applications of these spaces to the investigation of regularity of solutions to hypoelliptic partial differential equations, among which parabolic equations are.

Of course, the inner product spaces are of the most important among H\"ormander spaces. Lately Mikhai\-lets and Murach \cite{MikhailetsMurach05UMJ5, MikhailetsMurach06UMJ2, MikhailetsMurach06UMJ3, MikhailetsMurach07UMJ5, MikhailetsMurach06UMJ11, MikhailetsMurach08UMJ4, MikhailetsMurach12BJMA2, MikhailetsMurach14, Murach07UMJ6} built the theory of general elliptic differential operators on manifolds and elliptic boundary-value problems in Hilbert scales formed by the H\"ormander spaces
\begin{equation}\label{9f1.1}
H^{s;\varphi}:=B_{2,\mu}:=\{w\in\mathcal{S}'(\mathbb{R}^{n}):\,
\mu\mathcal{F}w\in L_{2}(\mathbb{R}^{n})\}
\end{equation}
and their analogs for Euclidean domains and smooth manifolds. Here, the function
\begin{equation*}
\mu(\xi):=(1+|\xi|^{2})^{s/2}\varphi\bigl((1+|\xi|^{2})^{1/2}\bigr)
\quad\mbox{of}\quad\xi\in\mathbb{R}^{n}
\end{equation*}
serves as the index of regularity, with $s$ being a real number and with $\varphi$ being a slowly varying function at infinity in the sense of J.~Karamata. (As usual, $\mathcal{F}$ denotes the Fourier transform.) The class of the spaces \eqref{9f1.1} contains the
Sobolev scale $\{H^{s}\}=\{H^{s;1}\}$ and is attached to it by
means of $s$ but is calibrated more finely than the Sobolev scale.

This theory is based on the method of the interpolation with a function parameter between Hilbert spaces, specifically, between Sobolev spaces. Using this interpolation systematically, Mikhailets and Murach transferred the classical theory of elliptic equations from Sobolev spaces to wide classes of H\"ormander inner product spaces. Their results were supplemented in \cite{AnopMurach14MFAT2, AnopMurach14UMJ7, MurachZinchenko13MFAT1, ZinchenkoMurach12UMJ11} for the class of all Hilbert spaces that are interpolation spaces between Sobolev inner product spaces. (This class admits a description in terms of H\"ormander spaces and is closed with respect to the interpolation with a function parameter \cite{MikhailetsMurach13UMJ3, MikhailetsMurach15ResMath1}.)

Note that the methods of interpolation with a number parameter are fruitful in the classical theory of partial differential equations \cite{Berezansky68, LionsMagenes72i, LionsMagenes72ii, Triebel95}. However, to pass from the classical spaces parametrized by numbers parameters to more general classes of spaces parametrized by function parameters, we need to use the interpolation with a function parameter
\cite{CobosFernandez88, Merucci84, MikhailetsMurach08MFAT1}.

The purpose of this paper is to prove a theorem about an isomorphism generated by the general initial-boundary value parabolic problem in anisotropic analogs of the H\"ormander spaces \eqref{9f1.1}. We consider the problem in a bounded many-dimensional cylinder and suppose that the initial conditions are zero Cauchy data. (The case of inhomogeneous Cauchy data can be reduced to the homogeneous ones.) We will deduce this theorem from the classical result by M.~S.~Agranovich and M.~I.~Vishik \cite[Theorem 12.1]{AgranovichVishik64} by means of the interpolation with a function parameter between anisotropic Sobolev spaces. To this end, we investigate necessary interpolation properties of the used anisotropic spaces. We also give some applications of this isomorphism theorem to investigation of local regularity of solutions to the parabolic problem.

Some results of this paper are announced (without proofs) in~\cite{LosMurach14Dop6}. The case of a bounded two-dimensional cylinder was investigated in~\cite{LosMurach13MFAT2, Los15UMJ5}. In this case, the H\"ormander spaces over the lateral area of the cylinder are isotropic, which essentially simplifies the reasoning.

The paper consists of eight sections. Section~\ref{9sec1} is Introduction. In Section~\ref{9sec2}, we state an initial-boundary value problem for a general parabolic equation with zero Cauchy data. The necessary anisotropic H\"ormander spaces are introduced in Section~\ref{9sec3}. The next Section~\ref{9sec4} contains the main results of the paper. They are the isomorphism Theorem~\ref{9th4.1} and its applications, Theorems \ref{9th4.3} and \ref{9th4.4}. Theorem~\ref{9th4.3} says about the local regularity of generalized solutions to the parabolic problem in the H\"ormander spaces. Theorem \ref{9th4.4} gives sufficient conditions under which the generalized derivatives (of a prescribed order) of the solutions are continuous on a given subset of the cylinder. These conditions are formulated in terms of H\"ormander spaces. In Section~\ref{9sec5}, we discuss the isomorphism theorem in the case of anisotropic Sobolev spaces and compare it with the known result by M.~S.~Agranovich and M.~I.~Vishik.    Section~\ref{9sec6} is devoted to the method of the interpolation with a function parameter between Hilbert spaces. We recall its definition and some of its properties. In Section~\ref{9sec7}, we prove formulas that connect anisotropic Sobolev spaces with H\"ormander spaces by means of this interpolation. The last Section~\ref{9sec8} contains the proofs of the main results of this paper.

\section{Statement of the problem}\label{9sec2}

Let an integer $n\geq2$ and a real number $\tau>0$ be arbitrarily chosen. Suppose that $G$ is a bounded domain in $\mathbb{R}^{n}$ and that its boundary $\Gamma:=\partial G$ is an infinitely smooth closed manifold (of dimension $n-1$), with the $C^{\infty}$-structure on $\Gamma$ being induced by~$\mathbb{R}^{n}$. Let $\Omega:=G\times(0,\tau)$ and $S:=\Gamma\times(0,\tau)$. Thus, $\Omega$ is an open cylinder in $\mathbb{R}^{n+1}$, and $S$ is its lateral area, with their closures $\overline{\Omega}=\overline{G}\times[0,\tau]$ and
$\overline{S}=\Gamma\times[0,\tau]$.

We consider the following parabolic initial-boundary value problem in $\Omega$:
\begin{equation}\label{9f2.1}
\begin{gathered}
A(x,t,D_x,\partial_t)u(x,t)\equiv\sum_{|\alpha|+2b\beta\leq 2m}a^{\alpha,\beta}(x,t)\,D^\alpha_x\partial^\beta_t
u(x,t)=f(x,t)\\
\mbox{for all}\quad x\in G\quad\mbox{and}\quad t\in(0,\tau);
\end{gathered}
\end{equation}
\begin{equation}\label{9f2.2}
\begin{gathered}
B_{j}(x,t,D_x,\partial_t)u(x,t)\equiv\sum_{|\alpha|+2b\beta\leq m_j}
b_{j}^{\alpha,\beta}(x,t)\,D^\alpha_x\partial^\beta_t u(x,t)=g_{j}(x,t)\\
\mbox{for all}\quad x\in\Gamma,\quad t\in(0,\tau),\quad\mbox{and}\quad j\in\{1,\dots,m\};
\end{gathered}
\end{equation}
\begin{equation}\label{9f2.3}
\partial^k_t u(x,t)\big|_{t=0}=0\quad\mbox{for all}\quad x\in G\quad\mbox{and}\quad
k\in\{0,\ldots,\varkappa-1\}.
\end{equation}
Note that the initial data \eqref{9f2.3} are assumed to be zero. Here, $b$, $m$, and all $m_j$ are arbitrarily given integers such that $m\geq b\geq1$, $\varkappa:=m/b\in\mathbb{Z}$, and $m_j\geq0$. All coefficients of the linear partial differential expressions $A:=A(x,t,D_x,\partial_t)$ and $B_{j}:=B_{j}(x,t,D_x,\partial_t)$, with $j\in\{1,\dots,m\}$, are supposed to be infinitely smooth complex-valued functions given on $\overline{\Omega}$ and $\overline{S}$ respectively; i.e., each
\begin{equation*}
a^{\alpha,\beta}\in C^{\infty}(\overline{\Omega}):=
\bigl\{w\!\upharpoonright\overline{\Omega}\!:\,w\in C^{\infty}(\mathbb{R}^{n+1})\bigr\}
\end{equation*}
and each
\begin{equation*}
b_{j}^{\alpha,\beta}\in C^{\infty}(\overline{S}):=
\bigl\{v\!\upharpoonright\overline{S}\!:\,v\in C^{\infty}(\Gamma\times\mathbb{R})\bigr\}.
\end{equation*}
(Naturally, we consider $\Gamma\times\mathbb{R}$ as an infinitely smooth manifold with the $C^{\infty}$-structure induced by $\mathbb{R}^{n+1}$.)

We use the notation
$D^\alpha_x:=D^{\alpha_1}_{1}\dots D^{\alpha_n}_{n}$, with $D_{k}:=i\,\partial/\partial{x_k}$, and $\partial_t:=\partial/\partial t$
for partial derivatives of functions depending on $x=(x_1,\ldots,x_n)\in\mathbb{R}^{n}$ and $t\in\mathbb{R}$. Here, $i$ is imaginary unit, and $\alpha=(\alpha_1,...,\alpha_n)$ is a multi-index, with $|\alpha|:=\alpha_1+\cdots+\alpha_n$. In formulas \eqref{9f2.1} and \eqref{9f2.2} and their analogs, we take summation over the integer-valued nonnegative indices $\alpha_1,...,\alpha_n$ and $\beta$ that satisfy the condition written under the integral sign. As usual, $\xi^{\alpha}:=\xi_{1}^{\alpha_{1}}\ldots\xi_{n}^{\alpha_{n}}$ for $\xi:=(\xi_{1},\ldots,\xi_{n})\in\mathbb{C}^{n}$.

We recall \cite[Section~9, Subsection~1]{AgranovichVishik64} that the initial-boundary value problem \eqref{9f2.1}--\eqref{9f2.3} is said to be parabolic in $\Omega$ if the following Conditions \ref{9cond2.1} and \ref{9cond2.2} are fulfilled.

\begin{condition}\label{9cond2.1}
For arbitrary $x\in\overline{G}$, $t\in[0,\tau]$,
$\xi\in\mathbb{R}^{n}$, and $p\in\mathbb{C}$ with $\mathrm{Re}\,p\geq0$, we have
\begin{equation*}
A^{\circ}(x,t,\xi,p)\equiv\sum_{|\alpha|+2b\beta=2m} a^{\alpha,\beta}(x,t)\,\xi^\alpha
p^{\beta}\neq0\quad\mbox{whenever}\quad|\xi|+|p|\neq0.
\end{equation*}
\end{condition}

To formulate Condition~\ref{9cond2.2}, we arbitrarily choose a point $x\in\Gamma$, real number $t\in[0,\tau]$, vector $\xi\in\mathbb{R}^{n}$ tangent to the boundary $\Gamma$ at $x$, and number $p\in\mathbb{C}$ with $\mathrm{Re}\,p\geq0$ such that $|\xi|+|p|\neq0$. Let $\nu(x)$ is the unit vector of the inward normal to $\Gamma$ at $x$. It follows from Condition~\ref{9cond2.1} and the inequality $n\geq2$ that the polynomial
$A^{\circ}(x,t,\xi+\zeta\nu(x),p)$ in $\zeta\in\mathbb{C}$ has $m$ roots $\zeta^{+}_{j}(x,t,\xi,p)$, $j=\nobreak1,\ldots,m$, with positive imaginary part and $m$ roots with negative imaginary part provided that each root is taken the number of times equal to its multiplicity.

\begin{condition}\label{9cond2.2}
For each choice indicated above, the polynomials
$$
B_{j}^{\circ}(x,t,\xi+\zeta\nu(x),p)\equiv\sum_{|\alpha|+2b\beta=m_{j}}
b_{j}^{\alpha,\beta}(x,t)\,(\xi+\zeta\nu(x))^{\alpha}\,p^{\beta},\quad j=1,\dots,m,
$$
in $\zeta\in\mathbb{C}$ are linearly independent modulo
$$
\prod_{j=1}^{m}(\zeta-\zeta^{+}_{j}(x,t,\xi,p)).
$$
\end{condition}

Note that Condition~\ref{9cond2.1} is the condition for the partial differential equation $Au=\nobreak f$ to be $2b$-parabolic in $\overline{\Omega}$ in the sense of I.~G.~Petrovskii \cite{Petrovskii38}, whereas Condition~\ref{9cond2.2} claims  that the system of boundary partial differential expressions $\{B_{1},\ldots,B_{m}\}$ covers $A$ on $\overline{S}$.

We associate the linear mapping
\begin{gather}\label{9f2.4}
C^{\infty}_{+}(\overline{\Omega})\ni u\mapsto(Au,Bu)
:=\bigl(Au,B_1u,\ldots,B_mu\bigr)\in
C^{\infty}_{+}(\overline{\Omega})\times
\bigl(C^{\infty}_{+}(\overline{S})\bigr)^{m}
\end{gather}
with the parabolic problem \eqref{9f2.1}--\eqref{9f2.3}. Here and below, \begin{gather*}
C^{\infty}_{+}(\overline{\Omega}):=
\bigl\{w\!\upharpoonright\overline{\Omega}:\,
w\in C^{\infty}(\mathbb{R}^{n+1}),\;\,
\mathrm{supp}\,w\subseteq\mathbb{R}^{n}\times[0,\infty)\bigr\},\\
C^{\infty}_{+}(\overline{S}):=\bigl\{h\!\upharpoonright\overline{S}:
\,h\in C^{\infty}(\Gamma\times\mathbb{R}),\;\,\mathrm{supp}\,
h\subseteq\Gamma\times[0,\infty)\bigr\}.
\end{gather*}
In the paper, all functions and distributions are supposed to be complex-valued.

The mapping \eqref{9f2.4} is a bijection of $C^{\infty}_{+}(\overline{\Omega})$ onto
$C^{\infty}_{+}(\overline{\Omega})\times(C^{\infty}_{+}(\overline{S}))^{m}$. This follows specifically from \cite[Theorem 12.1]{AgranovichVishik64} (see the reasoning at the end of Section~\ref{9sec5}).

The main purpose of the paper is to prove that the mapping \eqref{9f2.4} extends uniquely (by continuity) to an isomorphism between appropriate H\"ormander inner product spaces.

\section{H\"ormander spaces}\label{9sec3}

Here, we will define the H\"ormander inner product spaces being used in the paper. They are built on the base of the anisotropic function spaces $H^{s,s/(2b);\varphi}(\mathbb{R}^{k+1})$ given over $\mathbb{R}^{k+1}$, with $k\geq1$. These spaces are parametrized with the pair of the real numbers $s$ and $s/(2b)$ and with the function $\varphi\in\mathcal{M}$.

By definition, the class $\mathcal{M}$ consists of all Borel measurable functions $\varphi:[1,\infty)\rightarrow(0,\infty)$ that satisfy the following two conditions:
\begin{itemize}
\item[(i)] both the functions $\varphi$ and $1/\varphi$ are bounded on each compact interval $[1,b]$ with $1<b<\infty$;
\item[(ii)] the function $\varphi$ varies slowly at infinity in the sense of J.~Karamata \cite{Karamata30a}, i.e.,
\begin{equation*}
\lim_{r\rightarrow\infty}\frac{\varphi(\lambda r)}{\varphi(r)}=1\quad\mbox{for every}\quad\lambda>0.
\end{equation*}
\end{itemize}

Note that the theory of slowly varying functions is set forth, e.g., in monographs \cite{BinghamGoldieTeugels89, Seneta76}. An important example of a function $\varphi\in\mathcal{M}$ is given by a continuous function $\varphi:[1,\infty)\rightarrow(0,\infty)$ such that
\begin{equation*}
\varphi(r)=(\log r)^{q_{1}}\,(\log\log r)^{q_{2}} \ldots
(\,\underbrace{\log\ldots\log}_{k\;\mbox{\small{times}}}r\,)^{q_{k}}
\quad\mbox{for}\quad r\gg1,
\end{equation*}
with $k\in\mathbb{Z}$, $k\geq1$, and    $q_{1},q_{2},\ldots,q_{k}\in\mathbb{R}$.

Let $s\in\mathbb{R}$, $\varphi\in\mathcal{M}$, and real $\gamma>0$.
Although we need the space $H^{s,s\gamma;\varphi}(\mathbb{R}^{k+1})$ only in the case where $\gamma=1/(2b)$, it is naturally to introduce this space for arbitrary $\gamma>0$.

By definition, the complex linear space $H^{s,s\gamma;\varphi}(\mathbb{R}^{k+1})$ consists of all tempered distributions $w$ on $\mathbb{R}^{k+1}$ whose (complete) Fourier transform $\widetilde{w}$ is locally Lebesgue integrable over $\mathbb{R}^{k+1}$ and meets the condition
\begin{equation*}
\int\limits_{\mathbb{R}^{k}}\int\limits_{\mathbb{R}}
r_{\gamma}^{2s}(\xi,\eta)\,\varphi^{2}(r_{\gamma}(\xi,\eta))\,
|\widetilde{w}(\xi,\eta)|^{2}\,d\xi\,d\eta<\infty.
\end{equation*}
Here and below, we use the notation
$$
r_{\gamma}(\xi,\eta):=\bigl(1+|\xi|^2+|\eta|^{2\gamma}\bigr)^{1/2},
\quad\mbox{with}
\quad\xi\in\mathbb{R}^{k}\quad\mbox{and}\quad\eta\in\mathbb{R}.
$$

This space is equipped with the inner product
\begin{equation*}
(w_{1},w_{2})_{H^{s,s\gamma;\varphi}(\mathbb{R}^{k+1})}:=
\int\limits_{\mathbb{R}^{k}}\int\limits_{\mathbb{R}}
r_{\gamma}^{2s}(\xi,\eta)\,\varphi^{2}(r_{\gamma}(\xi,\eta))\,
\widetilde{w_{1}}(\xi,\eta)\,\overline{\widetilde{w_{2}}(\xi,\eta)}\,
d\xi\,d\eta
\end{equation*}
of $w_{1},w_{2}\in H^{s,s\gamma;\varphi}(\mathbb{R}^{k+1})$. The inner product naturally induces the norm
\begin{equation*}
\|w\|_{H^{s,s\gamma;\varphi}(\mathbb{R}^{k+1})}:=
(w,w)_{H^{s,s\gamma;\varphi}(\mathbb{R}^{k+1})}^{1/2}.
\end{equation*}

We note that $H^{s,s\gamma;\varphi}(\mathbb{R}^{k+1})$ is a special case of the spaces $\mathcal{B}_{p,k}$ introduced by L.~H\"ormander \cite[Sect.~2.2]{Hormander63}. Namely, $H^{s,s\gamma;\varphi}(\mathbb{R}^{k+1})=\mathcal{B}_{p,k}$ provided that $p=2$ and the function parameter
\begin{equation*}
k(\xi,\eta)=r_{\gamma}^{s}(\xi,\eta)\,\varphi(r_{\gamma}(\xi,\eta))
\quad\mbox{for all}\quad
\xi\in\mathbb{R}^{k}\quad\mbox{and}\quad\eta\in\mathbb{R}.
\end{equation*}
The spaces $\mathcal{B}_{p,k}$ were systematically investigated by L.~H\"ormander \cite[Sect.~2.2]{Hormander63}, \cite[Sect.~10.1]{Hormander83} and in the $p=2$ case by L.~R.~Volevich and B.~P.~Paneah \cite{VolevichPaneah65}. If $\gamma=1/(2b)$, then we say that $H^{s,s\gamma;\varphi}(\mathbb{R}^{k+1})$ is a $2b$-anisotropic H\"ormander space.

According to \cite[Theorem 2.2.1]{Hormander63}, the H\"ormander space
$H^{s,s\gamma;\varphi}(\mathbb{R}^{k+1})$ is complete (i.e., Hilbert) and separable and is continuously embedded in the linear topological space $\mathcal{S}'(\mathbb{R}^{k+1})$ of tempered distributions on $\mathbb{R}^{k+1}$. Furthermore, the set $C^{\infty}_{0}(\mathbb{R}^{k+1})$ of test functions on $\mathbb{R}^{k+1}$ is dense in $H^{s,s\gamma;\varphi}(\mathbb{R}^{k+1})$.

In the $\varphi(r)\equiv1$ case, the space $H^{s,s\gamma;\varphi}(\mathbb{R}^{k+1})$ becomes an anisotropic Sobolev space and is denoted by $H^{s,s\gamma}(\mathbb{R}^{k+1})$. Generally, we have the continuous and dense embeddings
\begin{equation}\label{9f3.1}
\begin{gathered}
H^{s_{1},s_{1}\gamma}(\mathbb{R}^{k+1})\hookrightarrow
H^{s,s\gamma;\varphi}(\mathbb{R}^{k+1})\hookrightarrow
H^{s_{0},s_{0}\gamma}(\mathbb{R}^{k+1})\\
\mbox{whenever}\quad s_{0}<s<s_{1}.
\end{gathered}
\end{equation}
They result directly from the following property of $\varphi\in\mathcal{M}$: for every $\varepsilon>0$ there exists a number $c=c({\varepsilon})\geq1$ such that
$c^{-1}r^{-\varepsilon}\leq\varphi(r)\leq c\,r^{\varepsilon}$ for all $r\geq1$ (see \cite[Sect.~1.5, Subsect.~1]{Seneta76}).

The embeddings \eqref{9f3.1} clarify the role of the function parameter $\varphi\in\mathcal{M}$ in the class of Hilbert function spaces
\begin{equation}\label{9f3.2}
\bigl\{H^{s,s\gamma;\varphi}(\mathbb{R}^{k+1}):\,
s\in\mathbb{R},\,\varphi\in\mathcal{M}\,\bigr\}.
\end{equation}
We see that $\varphi$ defines a supplementary (subpower) regularity of distributions with respect to the basic (power) regularity given by the pair of numbers $(s,s\gamma)$. Specifically, if
$\varphi(r)\rightarrow\infty$ [or $\varphi(r)\rightarrow0$] as $r\rightarrow\infty$, then $\varphi$ defines a positive [or negative] supplementary regularity. So, we can briefly say that $\varphi$ refines the power anisotropic regularity $(s,s\gamma)$.

Note that in the $\gamma=1$ case the space $H^{s,s\gamma;\varphi}(\mathbb{R}^{k+1})$ becomes the isotropic H\"ormander space denoted by $H^{s;\varphi}(\mathbb{R}^{k+1})$. The spaces $H^{s;\varphi}(\mathbb{R}^{k+1})$, with $s\in\nobreak\mathbb{R}$ and $\varphi\in\mathcal{M}$, form the refined Sobolev scale introduced and investigated by V.~A.~Mikha\-i\-lets and A.~A.~Murach \cite{MikhailetsMurach05UMJ5, MikhailetsMurach06UMJ3}. This scale has various applications in the theory of elliptic partial differential equations \cite{MikhailetsMurach12BJMA2, MikhailetsMurach14}.

Using the scale \eqref{9f3.2}, we now introduce the function spaces relating to the problem \eqref{9f2.1}--\eqref{9f2.3}. Let $V$ be an open nonempty set in $\mathbb{R}^{k+1}$. (Specifically, $V=\Omega$, with $k=n$.) We put
\begin{equation}\label{9f3.3}
H^{s,s\gamma;\varphi}_{+}(V):=\bigl\{w\!\upharpoonright\!V:\,
w\in H^{s,s\gamma;\varphi}(\mathbb{R}^{k+1}),\;\,\mathrm{supp}\,
w\subseteq\mathbb{R}^{k}\times[0,\infty)\bigl\}.
\end{equation}
The norm in the linear space \eqref{9f3.3} is defined by the formula
\begin{equation}\label{9f3.4}
\begin{gathered}
\|u\|_{H^{s,s\gamma;\varphi}_{+}(V)}:=
\inf\bigl\{\,\|w\|_{H^{s,s\gamma;\varphi}(\mathbb{R}^{k+1})}:\\
w\in H^{s,s\gamma;\varphi}(\mathbb{R}^{k+1}),\;\,
\mathrm{supp}\,w\subseteq\mathbb{R}^{k}\times[0,\infty),\;\,
u=w\!\upharpoonright\!V\bigl\},
\end{gathered}
\end{equation}
with $u\in H^{s,s\gamma;\varphi}_{+}(V)$.

Considering the $V=\mathbb{R}^{k+1}$ case, we note that  $H^{s,s\gamma;\varphi}_{+}(\mathbb{R}^{k+1})$ consists of all $w\in H^{s,s\gamma;\varphi}(\mathbb{R}^{k+1})$ with $\mathrm{supp}\,w\subseteq\mathbb{R}^{k}\times[0,\infty)$ and is a (closed) subspace of the Hilbert space $H^{s,s\gamma;\varphi}(\mathbb{R}^{k+1})$. According to \cite[Lemma~3.3]{VolevichPaneah65}, the set
$$
C^{\infty}_{0}(\mathbb{R}^{k}\times(0,\infty)):=
\bigl\{w\in C^{\infty}_{0}(\mathbb{R}^{k+1}):\,
\mathrm{supp}\,w\subseteq\mathbb{R}^{k}\times(0,\infty)\bigr\}
$$
is dense in the space $H^{s,s\gamma;\varphi}_{+}(\mathbb{R}^{k+1})$.

Generally, $H^{s,s\gamma;\varphi}_{+}(V)$ is a Hilbert space because
formulas \eqref{9f3.3} and \eqref{9f3.4} mean that $H^{s,s\gamma;\varphi}_{+}(V)$ is the factor space of the Hilbert space $H^{s,s\gamma;\varphi}_{+}(\mathbb{R}^{k+1})$ by its subspace
\begin{equation}\label{9f3.5}
H^{s,s\gamma;\varphi}_{+}(\mathbb{R}^{k+1},V):=\bigl\{w\in H^{s,s\gamma;\varphi}_{+}(\mathbb{R}^{k+1}):\,
w=0\;\,\mbox{in}\;\,V\bigr\}.
\end{equation}
The norm \eqref{9f3.4} is induced by the inner product
$$
(u_{1},u_{2})_{H^{s,s\gamma;\varphi}_{+}(V)}:=
(w_{1}-\Upsilon w_{1},w_{2}-\Upsilon w_{2})_{H^{s,s\gamma;\varphi}(\mathbb{R}^{k+1})},
$$
with $u_{1},u_{2}\in H^{s,s\gamma;\varphi}_{+}(V)$. Here, $w_{j}\in H^{s,s\gamma;\varphi}_{+}(\mathbb{R}^{k+1})$, $w_{j}=u_{j}$ in $V$ for every $j\in\{1,2\}$, and $\Upsilon$ is the orthogonal projector of the space $H^{s,s\gamma;\varphi}_{+}(\mathbb{R}^{k+1})$ onto its subspace \eqref{9f3.5}.

In the Sobolev case of $\varphi(r)\equiv1$, we will omit the index $\varphi$ in the designations of $H^{s,s\gamma;\varphi}_{+}(V)$ and similar spaces. We note the continuous and dense embeddings
\begin{equation}\label{9f3.6}
H^{s_{1},s_{1}\gamma}_{+}(V)\hookrightarrow
H^{s,s\gamma;\varphi}_{+}(V)\hookrightarrow
H^{s_{0},s_{0}\gamma}_{+}(V)\quad
\mbox{whenever}\quad s_{0}<s<s_{1}.
\end{equation}
They result from \eqref{9f3.1} and the density of the set $\{w\!\upharpoonright\!V:w\in C^{\infty}_{0}(\mathbb{R}^{k}\times(0,\infty))\}$ in the spaces appearing in \eqref{9f3.6}.

As to the problem \eqref{9f2.1}--\eqref{9f2.3}, we need the space $H^{s,s\gamma;\varphi}_{+}(V)$ in the case where $k=n$ and $V=\Omega$. Note that the set $C^{\infty}_{+}(\overline{\Omega})$ is dense in $H^{s,s\gamma;\varphi}_{+}(\Omega)$.

We also need to introduce an analog of the space $H^{s,s\gamma;\varphi}_{+}(V)$ for the lateral area $S$ of the cylinder $\Omega$. It is sufficient for our purposes to restrict ourselves to the $s>0$ case. Let $\Pi:=\mathbb{R}^{n-1}\times(0,\tau)$, and consider the Hilbert space $H^{s,s\gamma;\varphi}_{+}(V)$ in the case where $k=n-1$ and $V=\Pi$. We will define the space $H^{s,s\gamma;\varphi}_{+}(S)$ on the base of the space $H^{s,s\gamma;\varphi}_{+}(\Pi)$ with the help of special local charts on $\overline{S}$.

We arbitrarily choose a finite atlas from the $C^{\infty}$-structure on the closed manifold $\Gamma$. Let this atlas be formed by the local charts $\theta_{j}:\mathbb{R}^{n-1}\leftrightarrow\Gamma_{j}$, with
$j=1,\ldots,\lambda$. Here, the open sets $\Gamma_{1},\ldots,\Gamma_{\lambda}$ make up a covering of $\Gamma$. Besides, we arbitrarily choose functions
$\chi_{j}\in C^{\infty}(\Gamma)$, with $j=1,\ldots,\lambda$, such that
$\mathrm{supp}\,\chi_{j}\subset\Gamma_{j}$ and $\chi_{1}+\cdots+\chi_{\lambda}=1$ on $\Gamma$. So, these functions form a $C^{\infty}$-partition of unity on $\Gamma$ which is subordinate to the covering.

This atlas of $\Gamma$ induces the collection of the special local charts
$$
\theta_{j}^{*}:\mathbb{R}^{n-1}\times[0,\tau]\leftrightarrow
\Gamma_{j}\times[0,\tau],\quad j=1,\ldots,\lambda,
$$
of $\overline{S}=\Gamma\times[0,\tau]$ by the formula $\theta_{j}^{*}(x,t):=(\theta_{j}(x),t)$ for all
$x\in\mathbb{R}^{n-1}$ and $t\in[0,\tau]$.
Consider the functions
$\chi_{j}^{*}(x,t):=\chi_{j}(x)$ of $x\in\Gamma$ and  $t\in[0,\tau]$, with $j=1,\ldots,\lambda$. They form a $C^{\infty}$-partition of unity on $\overline{S}$ which is subordinate to the covering $\{\Gamma_{j}\times[0,\tau]:j=1,\ldots,\lambda\}$ of~$\overline{S}$.

Now, we put
\begin{equation}\label{9f3.7}
H^{s,s\gamma;\varphi}_{+}(S):=\bigl\{v\in L_2(S):\,
(\chi_{j}^{*}v)\circ\theta_{j}^{*}\in H^{s,s\gamma;\varphi}_{+}(\Pi)
\;\,\mbox{for all}\;\,j\in\{1,\ldots,\lambda\}\bigr\}.
\end{equation}
Here, recall, $s>0$, and, $L_2(S)$ is the Hilbert space of all square integrable functions $v:S\rightarrow\mathbb{C}$ with respect to the Lebesgue measure on the smooth surface $S$. As usual, $\circ$ is the sign of composition of functions or mappings, so that
\begin{equation*}
((\chi_{j}^{*}v)\circ\theta_{j}^{*})(x,t)=
(\chi_{j}^{*}v)(\theta_{j}^{*}(x,t))=
\chi_{j}(\theta_{j}(x))\,v((\theta_{j}(x),t))
\end{equation*}
for all $x\in\mathbb{R}^{n-1}$ and $t\in(0,\tau)$. The inner product in the linear space \eqref{9f3.7} is defined by the formula
\begin{equation}\label{9f3.8}
(v_{1},v_{2})_{H^{s,s\gamma;\varphi}_{+}(S)}:=\sum_{j=1}^{\lambda}\,
((\chi_{j}^{*}v_{1})\circ\theta_{j}^{*},
(\chi_{j}^{*}v_{2})\circ
\theta_{j}^{*})_{H^{s,s\gamma;\varphi}_{+}(\Pi)},
\end{equation}
with $v_{1},v_{2}\in H^{s,s\gamma;\varphi}_{+}(S)$. This inner product naturally induces the norm
\begin{equation*}
\|v\|_{H^{s,s\gamma;\varphi}_{+}(S)}:=
(v,v)_{H^{s,s\gamma;\varphi}_{+}(S)}^{1/2}.
\end{equation*}

\begin{lemma}\label{9lem3.1}
Let $s>0$, $\gamma>0$, and $\varphi\in\mathcal{M}$. Then we state the following:
\begin{itemize}
\item[(i)] The space $H^{s,s\gamma;\varphi}_{+}(S)$ is complete (i.e., Hilbert) and separable and does not depend up to equivalence of norms on the indicated choice of an atlas and partition of unity on~$\Gamma$.
\item[(ii)] The set $C^{\infty}_{+}(\overline{S})$ is dense in this space.
\end{itemize}
\end{lemma}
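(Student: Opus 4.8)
The plan is to reduce all three assertions to two local invariance properties of the model space $H^{s,s\gamma;\varphi}_{+}(\Pi)$ on the half-strip $\Pi=\mathbb{R}^{n-1}\times(0,\tau)$, namely: \textbf{(A)} multiplication by any $\psi\in C^{\infty}_{0}(\mathbb{R}^{n-1})$ acting only in the spatial variable $x$ is bounded on $H^{s,s\gamma;\varphi}_{+}(\Pi)$; and \textbf{(B)} the pullback $w\mapsto w\circ\Theta$ by any $\Theta(x,t)=(\theta(x),t)$, with $\theta$ a $C^{\infty}$-diffeomorphism of $\mathbb{R}^{n-1}$ equal to the identity outside a compact set, is bounded on $H^{s,s\gamma;\varphi}_{+}(\Pi)$. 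The decisive structural observation is that every transition map has the special product form $(x,t)\mapsto(\theta_{j}^{-1}\circ\theta'_{l}(x),t)$ and hence fixes the time variable, so the $2b$-anisotropic weight is respected; once (A) and (B) are in hand, the standard machinery of function spaces on manifolds applies.

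First I would note that, by formula \eqref{9f3.8}, the linear map $v\mapsto\bigl((\chi_{1}^{*}v)\circ\theta_{1}^{*},\dots,(\chi_{\lambda}^{*}v)\circ\theta_{\lambda}^{*}\bigr)$ is an isometry of $H^{s,s\gamma;\varphi}_{+}(S)$ into the direct sum $\bigoplus_{j=1}^{\lambda}H^{s,s\gamma;\varphi}_{+}(\Pi)$. To prove completeness it suffices to show its range is closed. Given a Cauchy sequence $(v_{k})$, each component $(\chi_{j}^{*}v_{k})\circ\theta_{j}^{*}$ is Cauchy in $H^{s,s\gamma;\varphi}_{+}(\Pi)$ and so converges there to some $w_{j}$. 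Because $s>0$, the embeddings \eqref{9f3.6} give $H^{s,s\gamma;\varphi}_{+}(\Pi)\hookrightarrow L_{2}(\Pi)$, so this convergence also holds in $L_{2}(\Pi)$; pushing the components forward by $(\theta_{j}^{*})^{-1}$ and summing (using $\sum_{j}\chi_{j}=1$) yields a limit $v\in L_{2}(S)$ with $v_{k}\to v$ in $L_{2}(S)$. Continuity of multiplication by $\chi_{j}^{*}$ and of pullback by $\theta_{j}^{*}$ on $L_{2}$ then forces $(\chi_{j}^{*}v)\circ\theta_{j}^{*}=w_{j}\in H^{s,s\gamma;\varphi}_{+}(\Pi)$, whence $v\in H^{s,s\gamma;\varphi}_{+}(S)$ and $v_{k}\to v$ in its norm. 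Separability is immediate, since $H^{s,s\gamma;\varphi}_{+}(S)$ is isometric to a subspace of a finite direct sum of separable spaces.

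For independence of the norm on the atlas and partition of unity, I would compare two choices $(\theta_{j},\chi_{j})$ and $(\theta'_{l},\chi'_{l})$. Writing $(\chi'_{l})^{*}v=\sum_{j}(\chi'_{l})^{*}\chi_{j}^{*}v$ and setting $F_{j}:=(\chi_{j}^{*}v)\circ\theta_{j}^{*}$ and $\theta_{jl}^{*}:=(\theta_{j}^{*})^{-1}\circ(\theta'_{l})^{*}$, one unravels the compositions to get $\bigl((\chi'_{l})^{*}\chi_{j}^{*}v\bigr)\circ(\theta'_{l})^{*}=(\chi'_{l}\circ\theta'_{l})\cdot\bigl(F_{j}\circ\theta_{jl}^{*}\bigr)$, i.e.\ each summand arises from the component $F_{j}$ by a change of variables of type (B) followed by a multiplication of type (A). Hence (A) and (B) bound the primed norm by a constant times the unprimed one, and symmetry gives equivalence. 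The density of $C^{\infty}_{+}(\overline{S})$ follows the same pattern: it is enough to approximate a $v$ supported in a single chart $\Gamma_{j}\times[0,\tau]$, for which $F:=v\circ\theta_{j}^{*}\in H^{s,s\gamma;\varphi}_{+}(\Pi)$ has compact $x$-support. By the density noted just after \eqref{9f3.6} (the restrictions to $\Pi$ of $C^{\infty}_{0}(\mathbb{R}^{n-1}\times(0,\infty))$ are dense), together with a cut-off of type (A) keeping the support inside the chart, I can choose $G$ smooth and compactly supported with $\|F-G\|_{H^{s,s\gamma;\varphi}_{+}(\Pi)}$ small; then $G\circ(\theta_{j}^{*})^{-1}$, extended by zero, lies in $C^{\infty}_{+}(\overline{S})$, and the estimate $\|v-G\circ(\theta_{j}^{*})^{-1}\|_{H^{s,s\gamma;\varphi}_{+}(S)}\le C\,\|F-G\|_{H^{s,s\gamma;\varphi}_{+}(\Pi)}$, obtained once more from (A) and (B) on the overlapping charts, closes the argument.

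The hard part will be the invariance properties (A) and (B) for a genuine function parameter $\varphi\in\mathcal{M}$; for the Sobolev members $\varphi\equiv1$ they are classical (Volevich--Paneah), and the real issue is transferring them to the whole scale. I expect to obtain them either from H\"ormander's general results on the behaviour of the spaces $\mathcal{B}_{2,k}$ under multiplication and diffeomorphisms, after verifying that the anisotropic weight $r_{\gamma}^{s}(\xi,\eta)\,\varphi(r_{\gamma}(\xi,\eta))$ is a temperate weight whose equivalence class is preserved by maps acting only in the $\xi$-variables, or, more in the spirit of this paper, by interpolating with a function parameter between the Sobolev estimates at two exponents $s_{0}<s<s_{1}$, so that the slowly varying factor $\varphi$ is produced automatically. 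In either route the extra care demanded by the anisotropy is mild, precisely because the transition maps leave the time variable untouched.
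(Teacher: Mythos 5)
Your plan is sound, and its crux coincides with the paper's: the flattening/sewing comparison through the special charts $\theta_j^{*}$, the key structural observation that transition maps fix the time variable, and the transfer of the local invariance properties from the Sobolev endpoints to the whole scale by interpolation with a function parameter. Indeed, the paper proves exactly your properties (A) and (B) in combined form: in the proof of Lemma~\ref{9lem7.3} the operators $Q_{k,l}$ defined in \eqref{9f7.25} are precisely a type-(B) pullback followed by a type-(A) multiplication; their boundedness at the Sobolev exponents is obtained from the tensor-product description of $H^{s_j,s_j\gamma}(\mathbb{R}^{n})$ together with H\"ormander's Theorem~B.1.8 for the isotropic factor, and then the interpolation formula \eqref{9f7.17} produces the bound on $H^{s,s\gamma;\varphi}_{+}(\Pi)$. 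So of your two suggested routes to (A) and (B), you should commit to the interpolation one: the alternative via H\"ormander's general $\mathcal{B}_{2,k}$ theory gives multiplier invariance readily enough, but diffeomorphism invariance of anisotropically weighted $\mathcal{B}_{2,k}$ spaces is not an off-the-shelf result, and the paper itself avoids relying on it. Where you genuinely depart from the paper is assertion (i): you prove completeness and separability directly, for all $\varphi\in\mathcal{M}$ at once, by showing the range of the isometric flattening map is closed, using only $L_2$-continuity of multiplication and pullback plus preservation of supports in the limit. The paper instead first treats the case $\varphi\equiv1$, $s\gamma-1/2\notin\mathbb{Z}$, by invoking Lemma~\ref{9lem5.1} to identify $H^{s,s\gamma}_{+}(S)$ with a subspace of $H^{s,s\gamma}(S)$ and citing known Sobolev facts, and only then obtains the general case from the interpolation formula \eqref{9f7.13} --- a route that forces the authors to be explicit about non-circularity, since \eqref{9f7.13} presupposes admissibility (hence completeness) of the Sobolev pair over $S$. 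Your direct argument buys freedom from Lemma~\ref{9lem5.1} and from the half-integer restriction in the base case, at no real extra cost; for atlas independence and density your argument and the paper's are essentially the same (the paper approximates $Lv$ componentwise by smooth vectors and applies the sewing operator $K$, which is your single-chart cut-off argument in aggregated form).
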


We will prove this lemma at the end of Section~\ref{9sec7}.

Ending the present section, we note that statement \eqref{9f3.6} about continuous and dense embeddings also holds true in the case where $V=S$ and $s_{0}>0$. This follows directly from the validity of this statement for the open set $V=\Pi\subset\mathbb{R}^{n}$ and from Lemma~\ref{9lem3.1}(ii).

\section{Main results}\label{9sec4}

Here, we formulate an isomorphism theorem for the parabolic problem \eqref{9f2.1}--\eqref{9f2.3} in H\"ormander spaces introduced above and then consider applications of this theorem to the investigation of the regularity of the generalized solutions to the problem.

Let $\sigma_0$ denote the smallest integer such that
$$
\sigma_0\geq2m,\quad\sigma_0\geq m_j+1\;\;\mbox{for each}\;\;j\in\{1,\ldots,m\},
\quad\mbox{and}\quad\frac{\sigma_0}{2b}\in\mathbb{Z}.
$$
Note, if $m_j\leq2m-1$ for each $j\in\{1,\ldots,m\}$, then $\sigma_0=2m$.

Isomorphism Theorem is formulated as follows.

\begin{theorem}\label{9th4.1}
For every real number $\sigma>\sigma_0$ and every function parameter $\varphi\in\nobreak\mathcal{M}$, the mapping \eqref{9f2.4} extends uniquely (by continuity) to an isomorphism
\begin{equation}\label{9f4.1}
(A,B):H^{\sigma,\sigma/(2b);\varphi}_{+}(\Omega)\leftrightarrow
\mathcal{H}^{\sigma-2m,(\sigma-2m)/(2b);\varphi}_{+}(\Omega,S),
\end{equation}
where
\begin{equation}\label{9f4.2}
\begin{gathered}
\mathcal{H}^{\sigma-2m,(\sigma-2m)/(2b);\varphi}_{+}(\Omega,S)\\
:=H^{\sigma-2m,(\sigma-2m)/(2b);\varphi}_{+}(\Omega)\oplus
\bigoplus_{j=1}^{m}H^{\sigma-m_j-1/2,(\sigma-m_j-1/2)/(2b);\varphi}_{+}(S).
\end{gathered}
\end{equation}
\end{theorem}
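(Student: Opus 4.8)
The plan is to reduce the theorem to the corresponding \emph{Sobolev} result and to pass from Sobolev spaces to Hörmander spaces by interpolation with a function parameter. Two ingredients are needed. The first is the $\varphi\equiv1$ version of \eqref{9f4.1}: for every real $\sigma>\sigma_0$ the mapping \eqref{9f2.4} extends to an isomorphism
$$
(A,B):H^{\sigma,\sigma/(2b)}_{+}(\Omega)\leftrightarrow
\mathcal{H}^{\sigma-2m,(\sigma-2m)/(2b)}_{+}(\Omega,S),
$$
which is the content of Section~\ref{9sec5} and rests on \cite[Theorem~12.1]{AgranovichVishik64}. The second is the family of interpolation formulas, to be established in Section~\ref{9sec7}, identifying each Hörmander space in \eqref{9f4.1}--\eqref{9f4.2} as an interpolation space between two Sobolev spaces of the same scale.

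Fix $\varphi\in\mathcal{M}$ and $\sigma>\sigma_0$, choose $\varepsilon>0$ so small that $\sigma-\varepsilon>\sigma_0$, and set $\sigma_{\mp}:=\sigma\mp\varepsilon$. Then $\sigma_{\mp}-2m>0$ and $\sigma_{\mp}-m_j-\tfrac12>0$ for all $j$, so all the target spaces are well defined. By the Sobolev case, the continuous extension of \eqref{9f2.4} is an isomorphism for each of the parameters $\sigma_-$ and $\sigma_+$. I would then take the interpolation parameter
$$
\psi(r):=r^{1/2}\,\varphi\bigl(r^{1/(2\varepsilon)}\bigr)\quad(r\geq1),\qquad
\psi(r):=\varphi(1)\quad(0<r<1);
$$
this $\psi$ is regularly varying of index $\tfrac12$, hence admissible as a function parameter because $\varphi\in\mathcal{M}$. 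Applying the interpolation functor $[\,\cdot\,,\,\cdot\,]_{\psi}$ to the two isomorphisms at the endpoints $\sigma_{\mp}$, and using the interpolation property (a bounded operator consistent on both endpoints is bounded between the interpolated spaces, and an isomorphism on both endpoints yields an isomorphism between them), I obtain an isomorphism between the interpolated domain and the interpolated target.

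It then remains to identify these interpolated spaces. For the domain, the connecting formula of Section~\ref{9sec7} gives
$$
\bigl[H^{\sigma_-,\sigma_-/(2b)}_{+}(\Omega),\,
H^{\sigma_+,\sigma_+/(2b)}_{+}(\Omega)\bigr]_{\psi}
=H^{\sigma,\sigma/(2b);\varphi}_{+}(\Omega),
$$
since, on the Fourier side, the generating operator of the pair has symbol $r_\gamma^{\,2\varepsilon}$ (with $\gamma=1/(2b)$) and the choice of $\psi$ turns the weight $r_\gamma^{\sigma_-}\psi(r_\gamma^{\,2\varepsilon})$ into $r_\gamma^{\sigma}\varphi(r_\gamma)$. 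For the target I would use that the interpolation functor commutes with finite orthogonal sums, so that it suffices to interpolate each summand separately. The $\Omega$-summand is handled exactly as above; in each boundary summand the exponents are shifted uniformly by $m_j+\tfrac12$, which leaves the generating symbol $r_\gamma^{\,2\varepsilon}$ and hence $\psi$ unchanged, so the same parameter serves throughout. The spaces $H^{\cdot;\varphi}_{+}(S)$ are reduced to the model spaces on $\Pi$ through the special local charts $\theta_j^{*}$ and the partition of unity $\chi_j^{*}$ of \eqref{9f3.7}--\eqref{9f3.8}, using Lemma~\ref{9lem3.1} and the interpolation formula for $H^{s,s\gamma;\varphi}_{+}(\Pi)$. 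Because $C^{\infty}_{+}(\overline{\Omega})$ is dense in $H^{\sigma,\sigma/(2b);\varphi}_{+}(\Omega)$, the extension by continuity is unique, and the interpolated isomorphism agrees with \eqref{9f2.4} on $C^{\infty}_{+}(\overline{\Omega})$; hence it is precisely the required $(A,B)$, which proves \eqref{9f4.1}.

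The principal obstacle is the identification of the interpolated spaces, i.e. the formulas of Section~\ref{9sec7}. On $\mathbb{R}^{k+1}$ and on the cylinder this is a weighted Plancherel argument, delicate because the anisotropic weight $r_\gamma$ couples the spatial and the time frequencies; one must verify, using the regular-variation properties of $\varphi$, that the function parameter $\psi$ indeed reproduces $r_\gamma^{\sigma}\varphi(r_\gamma)$ after interpolation, and that the pair of Sobolev spaces is a \emph{regular} interpolation pair. On the lateral surface $S$ the difficulty is compounded by the chart-by-chart definition \eqref{9f3.7}: one must show that interpolation commutes with the flattening and gluing, so that $H^{s,s\gamma;\varphi}_{+}(S)$ is genuinely the interpolation space of the corresponding Sobolev spaces on $S$. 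Establishing these compatibilities, together with the admissibility of $\psi$, is the technical heart of the matter; once they are in place, the isomorphism property follows formally from the Agranovich--Vishik theorem via the interpolation property.
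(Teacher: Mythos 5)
Your overall strategy is exactly the paper's: obtain endpoint isomorphisms in the Sobolev scale from Agranovich--Vishik, interpolate with a function parameter built from $\varphi$, and identify the interpolated spaces via the formulas of Section~\ref{9sec7} (Lemma~\ref{9lem7.3} together with Proposition~\ref{9prop6.3}), finishing by density of $C^{\infty}_{+}(\overline{\Omega})$. However, there is one genuine gap at the base of your argument: you invoke the Sobolev-case isomorphism $(A,B):H^{\sigma_{\mp},\sigma_{\mp}/(2b)}_{+}(\Omega)\leftrightarrow \mathcal{H}^{\sigma_{\mp}-2m,(\sigma_{\mp}-2m)/(2b)}_{+}(\Omega,S)$ for \emph{arbitrary} real endpoints $\sigma_{\mp}=\sigma\mp\varepsilon>\sigma_0$, claiming this is ``the content of Section~\ref{9sec5}.'' It is not: Section~\ref{9sec5} deduces the Sobolev case from \cite[Theorem~12.1]{AgranovichVishik64} only under the restriction $\sigma/(2b)\in\mathbb{Z}$ (the trace and extension arguments around Lemma~\ref{9lem5.1}, and the Agranovich--Vishik theorem itself, are set up for these values). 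For generic $\varepsilon$ your endpoints $\sigma\mp\varepsilon$ violate this integrality, so the two isomorphisms you propose to interpolate between are themselves instances of the theorem being proved ($\varphi\equiv1$, non-integral $\sigma/(2b)$), and the argument is circular as written.

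The repair is simple and is what the paper does: interpolate asymmetrically between $\sigma_0$ (which satisfies $\sigma_0/(2b)\in\mathbb{Z}$ by its definition) and an integer $\sigma_1>\sigma$ with $\sigma_1/(2b)\in\mathbb{Z}$, using the parameter \eqref{9f7.2} with $s:=\sigma$, $s_0:=\sigma_0$, $s_1:=\sigma_1$, i.e. $\psi(r)=r^{(\sigma-\sigma_0)/(\sigma_1-\sigma_0)}\varphi\bigl(r^{1/(\sigma_1-\sigma_0)}\bigr)$; note that with these endpoints the hypothesis \eqref{9f7.11} of Lemma~\ref{9lem7.3} holds automatically, since $s_j\gamma-1/2$ is then a half-integer. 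Everything else in your write-up --- the regular variation of $\psi$ of index in $(0,1)$, Proposition~\ref{9prop6.3} for the direct sum, the uniform shift $s_1-s_0$ across all boundary summands so that one $\psi$ serves throughout, and the flattening/sewing reduction to $\Pi$ --- then goes through verbatim. Alternatively, your symmetric-window scheme can be salvaged by a two-step argument (first derive the Sobolev case for all real $\sigma>\sigma_0$ by power-parameter interpolation between integral endpoints, then apply your $\psi$), but this needs a reiteration property in addition and buys nothing over the one-step asymmetric choice.
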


In the Sobolev case of $\varphi(r)\equiv1$ and $\sigma/(2b)\in\mathbb{Z}$, this theorem follows from the result by M.~S.~Agranovich and M.~I.~Vishik \cite[Theorem 12.1]{AgranovichVishik64}, the limiting case of $\sigma=\sigma_0$ being included. This will be demonstrated in Section~\ref{9sec5}. In the general situation, we will deduce Theorem~\ref{9th4.1} from the Sobolev case with the help of the interpolation with a function parameter between Hilbert spaces. This will be done in Section~\ref{9sec8} after we investigate the necessary interpolation properties of the H\"ormander spaces appearing in \eqref{9f4.1} and \eqref{9f4.2}.

As has just been mentioned, the mapping \eqref{9f2.4} extends by continuity to an isomorphism
\begin{equation}\label{9f4.3}
(A,B):H^{\sigma_{0},\sigma_{0}/(2b)}_{+}(\Omega)\leftrightarrow
\mathcal{H}^{\sigma_{0}-2m,(\sigma_{0}-2m)/(2b)}_{+}(\Omega,S),
\end{equation}
which acts between some anisotropic Sobolev spaces. All the isomorphisms
\eqref{9f4.1}, with $\sigma>\sigma_0$ and $\varphi\in\nobreak\mathcal{M}$, are restrictions of \eqref{9f4.3}. This results from the embeddings \eqref{9f3.6} being valid for $V=\Omega$ and $V=S$.

Every vector
\begin{equation}\label{9f4.4}
(f,g_{1},...,g_{m})\in
\mathcal{H}^{\sigma_{0}-2m,(\sigma_{0}-2m)/(2b)}_{+}(\Omega,S)
\end{equation}
has a unique preimage $u\in H^{\sigma_{0},\sigma_{0}/(2b)}_{+}(\Omega)$ relative to the one-to-one mapping \eqref{9f4.3}. The function $u$ is said to be a (strong) generalized solution to the parabolic problem \eqref{9f2.1}--\eqref{9f2.3} with the right-hand sides \eqref{9f4.4}.

Let us now discuss the regularity properties of this solution in H\"ormander spaces. The next result follows immediately from Theorem~\ref{9th4.1}.

\begin{corollary}\label{9cor4.2}
Assume that  $u\in H^{\sigma_{0},\sigma_{0}/(2b)}_{+}(\Omega)$ is a generalized solution to the problem \eqref{9f2.1}--\eqref{9f2.3} whose right-hand sides satisfy the condition
\begin{equation*}
(f,g_{1},...,g_{m})\in
\mathcal{H}^{\sigma-2m,(\sigma-2m)/(2b);\varphi}_{+}(\Omega,S)
\end{equation*}
for some $\sigma>\sigma_{0}$ and $\varphi\in\nobreak\mathcal{M}$. Then
$u\in H^{\sigma,\sigma/(2b);\varphi}_{+}(\Omega)$.
\end{corollary}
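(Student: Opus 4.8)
The plan is to read off the result from the isomorphism Theorem~\ref{9th4.1}, using the observation made immediately after it that every isomorphism \eqref{9f4.1} is a restriction of the Sobolev isomorphism \eqref{9f4.3}. First I would recall what the hypothesis means. By the definition of a generalized solution, $u$ is the (unique) preimage of the vector
\begin{equation*}
F:=(f,g_{1},\ldots,g_{m})\in
\mathcal{H}^{\sigma_{0}-2m,(\sigma_{0}-2m)/(2b)}_{+}(\Omega,S)
\end{equation*}
under the isomorphism \eqref{9f4.3}; that is, $(A,B)u=F$ in this Sobolev setting. The additional assumption of the corollary is precisely that $F$ belongs to the finer space $\mathcal{H}^{\sigma-2m,(\sigma-2m)/(2b);\varphi}_{+}(\Omega,S)$ with $\sigma>\sigma_{0}$ and $\varphi\in\mathcal{M}$. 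The continuous embedding of this space into $\mathcal{H}^{\sigma_{0}-2m,(\sigma_{0}-2m)/(2b)}_{+}(\Omega,S)$, which follows from \eqref{9f3.6} applied to each summand in \eqref{9f4.2} (for $V=\Omega$, and for $V=S$, the latter being admissible since $\sigma_{0}-m_{j}-1/2\geq1/2>0$), guarantees that the two readings of the assumption are consistent.

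Next I would invoke Theorem~\ref{9th4.1} for this $\sigma$ and $\varphi$. It supplies the isomorphism \eqref{9f4.1}, and hence a unique element $v\in H^{\sigma,\sigma/(2b);\varphi}_{+}(\Omega)$ with $(A,B)v=F$. The key step is to identify $v$ with the given solution $u$. By the embedding \eqref{9f3.6} for $V=\Omega$ with $s_{0}=\sigma_{0}<s=\sigma$, we have $v\in H^{\sigma_{0},\sigma_{0}/(2b)}_{+}(\Omega)$; moreover, since \eqref{9f4.1} is a restriction of \eqref{9f4.3}, the equality $(A,B)v=F$ continues to hold in the Sobolev setting of \eqref{9f4.3}. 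Thus both $u$ and $v$ are preimages of $F$ under the injective mapping \eqref{9f4.3}, whence $u=v$. Consequently $u\in H^{\sigma,\sigma/(2b);\varphi}_{+}(\Omega)$, as claimed.

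There is no serious obstacle here: the whole argument is a bookkeeping of how the finer H\"ormander isomorphism sits inside the coarser Sobolev one. The only point requiring care --- and the reason the restriction property recorded after Theorem~\ref{9th4.1} is needed --- is to ensure that the finer-space solution $v$ furnished by Theorem~\ref{9th4.1} actually coincides with the originally given coarse-space solution $u$, rather than merely solving the same problem in a formally different space. This coincidence rests on the compatibility of the embeddings \eqref{9f3.6} with the action of $(A,B)$ and on the uniqueness of preimages under \eqref{9f4.3}, which together make the corollary an immediate consequence of the isomorphism theorem.
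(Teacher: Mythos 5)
Your proposal is correct and follows essentially the same route as the paper: the paper derives Corollary~\ref{9cor4.2} immediately from Theorem~\ref{9th4.1} together with the observation, recorded right after that theorem, that each isomorphism \eqref{9f4.1} is a restriction of the Sobolev isomorphism \eqref{9f4.3} via the embeddings \eqref{9f3.6}. Your write-up merely makes explicit the uniqueness-of-preimage bookkeeping (including the check $\sigma_{0}-m_{j}-1/2\geq1/2>0$ needed for \eqref{9f3.6} over $S$) that the paper leaves implicit.
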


We observe that the supplementary regularity $\varphi$ of the right-hand sides is inherited by the solution.

Let us formulate a local version of this result. Let $U$ be an open set in $\mathbb{R}^{n+1}$, and let $\omega:=U\cap\Omega\neq\varnothing$,
$\pi_{1}:=U\cap\partial\Omega$, and $\pi_{2}:=U\cap S$. We need to introduce local analogs of the spaces $H^{s,s\gamma;\varphi}_{+}(\Omega)$ and $H^{s,s\gamma;\varphi}_{+}(S)$ with $s>0$, $\gamma=1/(2b)$, and $\varphi\in\mathcal{M}$.

We let $H^{s,s\gamma;\varphi}_{+,\mathrm{loc}}(\omega,\pi_1)$ denote the linear space of all distributions $u$ on $\Omega$ such that $\chi u\in\nobreak H^{s,s\gamma;\varphi}_{+}(\Omega)$ for each function $\chi\in C^\infty (\overline\Omega)$ with $\mathrm{supp}\,\chi\subset\omega\cup\pi_1$. The topology in this space is given by the seminorms
$$
u\mapsto\|\chi u\|_{H^{s,s\gamma;\varphi}_{+}(\Omega)},
$$
where $\chi$ is an arbitrary above-mentioned function. Analogously, we let $H^{s,s\gamma;\varphi}_{+,\mathrm{loc}}(\pi_2)$ denote the linear space of all distributions $v$ on $S$ such that $\chi v\in H^{s,s\gamma;\varphi}_{+}(S)$ for each function $\chi\in C^{\infty}(\overline S)$ with $\mathrm{supp}\,\chi\subset\pi_2$. The topology in this space is given by the seminorms
$$
v\mapsto\|\chi v\|_{H^{s,s\gamma;\varphi}_{+}(S)},
$$
where $\chi$ is an arbitrary function mentioned just now.

\begin{theorem}\label{9th4.3}
Let $u\in H^{\sigma_0,\sigma_0/(2b)}_+(\Omega)$ be a generalized solution to the parabolic problem \eqref{9f2.1}--\eqref{9f2.3} with the right-hand sides \eqref{9f4.4}. Assume that
\begin{gather}\label{9f4.5}
f\in H^{\sigma-2m,(\sigma-2m)/(2b);\varphi}_{+,\mathrm{loc}}
(\omega,\pi_1),\\
g_{j}\in H^{\sigma-m_j-1/2,(\sigma-m_j-1/2)/(2b);\varphi}_{+,\mathrm{loc}}
(\pi_2),\quad\mbox{with}\quad j=1,\dots,m,
\label{9f4.6}
\end{gather}
for some $\sigma>\sigma_0$ and $\varphi\in\mathcal{M}$. Then $u\in H^{\sigma,\sigma/(2b);\varphi}_{+,\mathrm{loc}}(\omega,\pi_1)$.
\end{theorem}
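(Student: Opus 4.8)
The plan is to localise the global isomorphism of Theorem~\ref{9th4.1} --- more precisely its consequence Corollary~\ref{9cor4.2} --- by multiplying $u$ by cutoff functions, commuting these through the differential expressions $A$ and $B_{j}$, and then bootstrapping the regularity in finitely many steps with a strictly sublinear gain. Fix a function $\chi\in C^{\infty}(\overline\Omega)$ with $\mathrm{supp}\,\chi\subset\omega\cup\pi_{1}$; by the definition of $H^{\sigma,\sigma/(2b);\varphi}_{+,\mathrm{loc}}(\omega,\pi_{1})$ it suffices to show that $\chi u\in H^{\sigma,\sigma/(2b);\varphi}_{+}(\Omega)$.

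First I would record the commutator identities. For $\eta\in C^{\infty}(\overline\Omega)$ set $A_{\eta}:=[A,\eta]$ and $B_{j,\eta}:=[B_{j},\eta]$. These are linear partial differential expressions with $C^{\infty}$ coefficients supported in $\mathrm{supp}\,\eta$, and they have parabolic orders $\le 2m-1$ and $\le m_{j}-1$ respectively, since forming a commutator transfers at least one derivative onto $\eta$ and thereby lowers the weight $|\alpha|+2b\beta$ by at least one. Hence, for $\lambda>0$ and $\psi\in\mathcal{M}$, they act continuously
\begin{gather*}
A_{\eta}\colon H^{\lambda,\lambda/(2b);\psi}_{+}(\Omega)\to
H^{\lambda-2m+1,(\lambda-2m+1)/(2b);\psi}_{+}(\Omega),\\
B_{j,\eta}\colon H^{\lambda,\lambda/(2b);\psi}_{+}(\Omega)\to
H^{\lambda-m_{j}+1/2,(\lambda-m_{j}+1/2)/(2b);\psi}_{+}(S).
\end{gather*}
These mapping properties are the lower-order counterparts of the continuity of the components of \eqref{9f2.4} contained in Theorem~\ref{9th4.1}; they are established for Sobolev spaces and then transferred to the H\"ormander spaces by the same interpolation argument. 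Since $\chi u$ again has support in $\{t\ge0\}$ and inherits the zero Cauchy data \eqref{9f2.3} from $u$ (the Leibniz rule preserves the vanishing of $\partial^{k}_{t}$ at $t=0$), it is itself a generalised solution of a parabolic problem of the form \eqref{9f2.1}--\eqref{9f2.3}, and for any such $\eta$ the Leibniz rule gives $A(\eta u)=\eta f+A_{\eta}u$ and $B_{j}(\eta u)=\eta g_{j}+B_{j,\eta}u$. The key point is that if $\zeta\in C^{\infty}(\overline\Omega)$ equals $1$ on a neighbourhood of $\mathrm{supp}\,\eta$, then $A_{\eta}u=A_{\eta}(\zeta u)$ and $B_{j,\eta}u=B_{j,\eta}(\zeta u)$, because the coefficients of these commutators are supported in $\mathrm{supp}\,\eta$, where $\zeta u=u$.

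Now for the bootstrap. Choose an integer $p\ge1$ with $(\sigma-\sigma_{0})/p<1$, put $\lambda_{l}:=\sigma_{0}+l(\sigma-\sigma_{0})/p$ for $l=0,\dots,p$ (so $\lambda_{0}=\sigma_{0}$, $\lambda_{p}=\sigma$, and every gap $\lambda_{l+1}-\lambda_{l}$ is strictly less than $1$), and set $\psi_{l}:=1$ for $l<p$ and $\psi_{p}:=\varphi$. Next choose cutoffs $\chi_{0},\dots,\chi_{p}\in C^{\infty}(\overline\Omega)$ with supports in $\omega\cup\pi_{1}$, with $\chi_{p}=\chi$, and with $\chi_{l}\equiv1$ on a neighbourhood of $\mathrm{supp}\,\chi_{l+1}$. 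I claim that $\chi_{l}u\in H^{\lambda_{l},\lambda_{l}/(2b);\psi_{l}}_{+}(\Omega)$ for all $l$. For $l=0$ this holds because $u\in H^{\sigma_{0},\sigma_{0}/(2b)}_{+}(\Omega)$ and $\chi_{0}$ is smooth. Assuming it for $l$, the identities above with $\eta=\chi_{l+1}$ and $\zeta=\chi_{l}$ give
\[
A(\chi_{l+1}u)=\chi_{l+1}f+A_{\chi_{l+1}}(\chi_{l}u),\qquad
B_{j}(\chi_{l+1}u)=\chi_{l+1}g_{j}+B_{j,\chi_{l+1}}(\chi_{l}u).
\]
By \eqref{9f4.5} and \eqref{9f4.6} (the support of $\chi_{l+1}$ lies in $\omega\cup\pi_{1}$, and its trace on $S$ in $\pi_{2}$) we have $\chi_{l+1}f\in H^{\sigma-2m,(\sigma-2m)/(2b);\varphi}_{+}(\Omega)$ and $\chi_{l+1}g_{j}\in H^{\sigma-m_{j}-1/2,(\sigma-m_{j}-1/2)/(2b);\varphi}_{+}(S)$, while the commutator terms lie in $H^{\lambda_{l}-2m+1,(\lambda_{l}-2m+1)/(2b);\psi_{l}}_{+}(\Omega)$ and $H^{\lambda_{l}-m_{j}+1/2,(\lambda_{l}-m_{j}+1/2)/(2b);\psi_{l}}_{+}(S)$ by the inductive hypothesis and the continuity recorded above. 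Because $\lambda_{l+1}-\lambda_{l}<1$, the embeddings \eqref{9f3.6} (valid for $V=\Omega$ and, via Lemma~\ref{9lem3.1}, for $V=S$) absorb both the surplus in the order and the function parameter, so the full right-hand side lies in $\mathcal{H}^{\lambda_{l+1}-2m,(\lambda_{l+1}-2m)/(2b);\psi_{l+1}}_{+}(\Omega,S)$. Since $\lambda_{l+1}>\sigma_{0}$ and $\chi_{l+1}u\in H^{\sigma_{0},\sigma_{0}/(2b)}_{+}(\Omega)$ is the generalised solution with this right-hand side, Corollary~\ref{9cor4.2} (applied with $\sigma$ replaced by $\lambda_{l+1}$ and $\varphi$ by $\psi_{l+1}$) yields $\chi_{l+1}u\in H^{\lambda_{l+1},\lambda_{l+1}/(2b);\psi_{l+1}}_{+}(\Omega)$. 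This closes the induction, and the case $l=p$ is exactly $\chi u\in H^{\sigma,\sigma/(2b);\varphi}_{+}(\Omega)$, as required.

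The main obstacle I expect is not the scheme itself but the justification of the two commutator mapping properties in the anisotropic H\"ormander scale, with the correct exponents (the time weight $1/(2b)$ and the loss of $1/2$ in the trace onto $S$); these have to be reduced to the anisotropic Sobolev case and then carried across by interpolation, exactly as for the operator $(A,B)$ of Theorem~\ref{9th4.1}. The device that makes the function-parameter bookkeeping painless is the choice of strictly sublinear gaps $\lambda_{l+1}-\lambda_{l}<1$: it guarantees that at each intermediate step the higher Sobolev regularity of the commutator term dominates, so that the embeddings \eqref{9f3.6} swallow any function parameter, and $\varphi$ need only be switched on at the final step. A minor technical point to check is that traces on $\overline S$ of functions supported in $\omega\cup\pi_{1}$ are governed by the boundary hypothesis \eqref{9f4.6} only on $\pi_{2}$, so the cutoffs (and if need be the set $U$) must be arranged so that no mass is carried by the edge $\Gamma\times\{0,\tau\}$.
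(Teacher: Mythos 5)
Your proposal is correct and follows the paper's own strategy in all essentials: your decomposition of $(A,B)(\chi u)$ into the localized data plus a strictly lower-order commutator applied to a wider cutoff is exactly the paper's identity \eqref{9f8.4} with the operators \eqref{9f8.5}--\eqref{9f8.6}; your commutator mapping properties are the paper's bounded operator \eqref{9f8.7}, established there precisely as you indicate (anisotropic Sobolev estimates plus the interpolation Lemma~\ref{9lem7.3}); and the bootstrap engine is Corollary~\ref{9cor4.2} in both cases. The one genuine difference is the bookkeeping of the induction. The paper iterates the implication \eqref{9f8.3} in steps of length exactly $1$, carrying the parameter $\varphi$ through every step; this forces the base point to satisfy $\sigma-\lambda_{0}<\sigma_{0}<\sigma-\lambda_{0}+1$ strictly and hence requires a separate treatment of the case $\sigma\in\mathbb{Z}$ via a preliminary half-step to $\sigma-1/2$. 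You instead take $p$ equal steps of length $(\sigma-\sigma_{0})/p<1$, keep plain Sobolev regularity ($\psi_{l}\equiv1$) at the intermediate stages, and switch on $\varphi$ only at the last step, where the strict surplus $\lambda_{p-1}+1>\sigma$ and the embeddings \eqref{9f3.6} absorb the function parameter; this uniformizes the argument and eliminates the paper's case distinction --- a small but real simplification. Two remarks: your blanket claim that the commutator estimates hold for all $\lambda>0$ is looser than what can be asserted (the paper states \eqref{9f8.7} only for $s>\sigma_{0}-1$), but your application only uses $\lambda_{l}\geq\sigma_{0}$, so nothing breaks; and the edge technicality you flag --- a cutoff supported in $\omega\cup\pi_{1}$ may have trace on $\overline{S}$ touching $\Gamma\times\{0,\tau\}$, whereas \eqref{9f4.6} formally controls only cutoffs supported in $\pi_{2}$ --- is genuinely present and is passed over silently in the paper's proof as well, so noticing it is to your credit rather than a gap in your argument relative to the paper's.
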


In the special case where $\omega=\Omega$ and $\pi_1=\partial\Omega$ (then $\pi_2=S$), Theorem~\ref{9th4.3} says about the global increasing in smoothness, so that we arrive at Corollary~\ref{9cor4.2}. If $\pi_1=\varnothing$, then this theorem asserts that the regularity increases in neighbourhoods of interior points of the closed domain~$\overline{\Omega}$.

Using H\"ormander spaces, we can obtain fine sufficient conditions under which the generalized solution $u$ and its generalized derivatives of a prescribed order are continuous on $\omega\cup\pi_1$.

\begin{theorem}\label{9th4.4}
Let an integer $p\geq0$ be such that $p+b+n/2>\sigma_{0}$, and let $u\in H^{\sigma_0,\sigma_0/(2b)}_+(\Omega)$ be a generalized solution to the parabolic problem \eqref{9f2.1}--\eqref{9f2.3} with the right-hand sides \eqref{9f4.4}. Suppose that they satisfy conditions \eqref{9f4.5}, \eqref{9f4.6} for $\sigma:=p+b+n/2$ and some function parameter $\varphi\in\mathcal{M}$ subject to
\begin{equation}\label{9f4.7}
\int\limits_{1}^{\infty}\;\frac{dr}{r\varphi^2(r)}<\infty.
\end{equation}
Then the solution $u(x,t)$ and all its generalized derivatives
$D_{x}^{\alpha}\partial_{t}^{\beta}u(x,t)$ with $|\alpha|+2b\beta\leq p$ are continuous on $\omega\cup\pi_1$.
\end{theorem}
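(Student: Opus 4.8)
The plan is to combine the local regularity statement of Theorem~\ref{9th4.3} with an anisotropic Sobolev-type embedding into spaces of continuous functions, the borderline case of the latter being governed precisely by the convergence condition \eqref{9f4.7}. First I would apply Theorem~\ref{9th4.3} with $\sigma:=p+b+n/2$: the hypotheses \eqref{9f4.5} and \eqref{9f4.6} are assumed for exactly this $\sigma$ and the chosen $\varphi\in\mathcal{M}$, and $\sigma>\sigma_0$ holds by assumption, so Theorem~\ref{9th4.3} gives $u\in H^{\sigma,\sigma/(2b);\varphi}_{+,\mathrm{loc}}(\omega,\pi_1)$. Next I would localize. Fix any point $z_0\in\omega\cup\pi_1$ and choose $\chi\in C^{\infty}(\overline\Omega)$ with $\mathrm{supp}\,\chi\subset\omega\cup\pi_1$ and $\chi\equiv1$ on a neighbourhood of $z_0$. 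By the definition of the local space, $\chi u\in H^{\sigma,\sigma/(2b);\varphi}_{+}(\Omega)$; by \eqref{9f3.3} it is the restriction to $\Omega$ of some $w\in H^{\sigma,\sigma/(2b);\varphi}(\mathbb{R}^{n+1})$ with $\mathrm{supp}\,w\subseteq\mathbb{R}^{n}\times[0,\infty)$. Since $\chi\equiv1$ near $z_0$, the derivatives $D_x^{\alpha}\partial_t^{\beta}u$ and $D_x^{\alpha}\partial_t^{\beta}w$ coincide near $z_0$, so it suffices to prove that every $D_x^{\alpha}\partial_t^{\beta}w$ with $|\alpha|+2b\beta\leq p$ is continuous on all of $\mathbb{R}^{n+1}$.

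The decisive step is the embedding claim: if $w\in H^{\sigma,\sigma\gamma;\varphi}(\mathbb{R}^{n+1})$ with $\gamma=1/(2b)$, $\sigma=p+b+n/2$, and $\varphi\in\mathcal{M}$ satisfies \eqref{9f4.7}, then $\xi^{\alpha}\eta^{\beta}\widetilde w\in L_{1}(\mathbb{R}^{n+1})$ whenever $|\alpha|+2b\beta\leq p$; consequently $D_x^{\alpha}\partial_t^{\beta}w$, being the inverse Fourier transform of an $L_1$-function, is continuous and bounded. To obtain the $L_1$-bound I would use the Cauchy--Schwarz inequality,
\begin{equation*}
\begin{gathered}
\int_{\mathbb{R}^{n+1}}|\xi^{\alpha}\eta^{\beta}\,\widetilde w(\xi,\eta)|\,d\xi\,d\eta\\
\leq\|w\|_{H^{\sigma,\sigma\gamma;\varphi}(\mathbb{R}^{n+1})}
\Bigl(\int_{\mathbb{R}^{n+1}}
\frac{|\xi|^{2|\alpha|}\,|\eta|^{2\beta}}
{r_{\gamma}^{2\sigma}(\xi,\eta)\,\varphi^{2}(r_{\gamma}(\xi,\eta))}\,d\xi\,d\eta\Bigr)^{1/2},
\end{gathered}
\end{equation*}
together with the pointwise bound $|\xi|^{2|\alpha|}|\eta|^{2\beta}\leq C\,r_{\gamma}^{2(|\alpha|+2b\beta)}(\xi,\eta)$, which follows from $|\xi|\leq r_{\gamma}$ and $|\eta|\leq r_{\gamma}^{1/\gamma}=r_{\gamma}^{2b}$. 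Writing $q:=|\alpha|+2b\beta\leq p$, the last integral is dominated by $\int_{\mathbb{R}^{n+1}}r_{\gamma}^{-2(\sigma-q)}(\xi,\eta)\,\varphi^{-2}(r_{\gamma}(\xi,\eta))\,d\xi\,d\eta$.

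The main obstacle is the convergence of this integral at the borderline order $q=p$, and this is exactly where \eqref{9f4.7} is needed. The weight $r_{\gamma}$ is homogeneous of degree one under the anisotropic dilation $(\xi,\eta)\mapsto(\lambda\xi,\lambda^{2b}\eta)$, whose Jacobian is $\lambda^{n+2b}$; hence passing to anisotropic polar coordinates reduces the integral over $\{r_{\gamma}\geq2\}$ (the region $\{r_{\gamma}<2\}$ being bounded, with bounded integrand by condition~(i) in the definition of $\mathcal{M}$, thus contributing finitely) to a constant multiple of
\begin{equation*}
\int_{1}^{\infty}\frac{\rho^{\,n+2b-1}}{\rho^{2(\sigma-q)}\,\varphi^{2}(\rho)}\,d\rho
=\int_{1}^{\infty}\frac{d\rho}{\rho^{\,2(\sigma-q)-(n+2b)+1}\,\varphi^{2}(\rho)}.
\end{equation*}
With $\sigma=p+b+n/2$ and $q\leq p$ one has $\sigma-q\geq b+n/2=(n+2b)/2$, so the exponent $2(\sigma-q)-(n+2b)+1\geq1$. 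For $q<p$ this exponent exceeds $1$, and the integral converges for every $\varphi\in\mathcal{M}$ by the sub-power bounds on $\varphi$ recalled in Section~\ref{9sec3}; for the critical value $q=p$ the exponent equals $1$ and the integral becomes precisely $\int_{1}^{\infty}(\rho\,\varphi^{2}(\rho))^{-1}\,d\rho$, finite by \eqref{9f4.7}. Hence $\xi^{\alpha}\eta^{\beta}\widetilde w\in L_{1}(\mathbb{R}^{n+1})$ for all $|\alpha|+2b\beta\leq p$, each $D_x^{\alpha}\partial_t^{\beta}w$ is continuous on $\mathbb{R}^{n+1}$, and therefore $D_x^{\alpha}\partial_t^{\beta}u$ is continuous near $z_0$. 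As $z_0\in\omega\cup\pi_1$ was arbitrary, the asserted continuity on $\omega\cup\pi_1$ follows.
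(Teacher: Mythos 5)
Your proof is correct and follows essentially the same route as the paper: Theorem~\ref{9th4.3} with $\sigma=p+b+n/2$, localization by a cut-off $\chi$ and extension to some $w\in H^{\sigma,\sigma/(2b);\varphi}(\mathbb{R}^{n+1})$, then the Fourier-side $L_{1}$ criterion via Cauchy--Schwarz, reducing continuity of each $D_{x}^{\alpha}\partial_{t}^{\beta}w$ to the convergence of a weighted integral whose borderline case $|\alpha|+2b\beta=p$ is exactly \eqref{9f4.7} --- which is precisely the paper's Lemma~\ref{9lem8.1}(i). The only (harmless) deviation is that you bound the weight integral from above using $|\xi|^{2|\alpha|}|\eta|^{2\beta}\le r_{\gamma}^{2(|\alpha|+2b\beta)}$ and anisotropic scaling (implicitly invoking the uniform-convergence property of slowly varying functions to replace $\varphi(r_{\gamma})$ by $\varphi(\rho)$), whereas the paper evaluates it exactly by the substitutions $\eta=\eta_{1}^{2b}$ and $r=(1+\varrho^{2})^{1/2}$, an identity it needs again only for the sharpness claim of Remark~\ref{9rem4.5}, not for the theorem itself.
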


\begin{remark}\label{9rem4.5}\rm
Condition \eqref{9f4.7} in Theorem~\ref{9th4.4} is sharp. If each solution $u\in\nobreak H^{\sigma_0,\sigma_0/(2b)}_+(\Omega)$ to the problem \eqref{9f2.1}--\eqref{9f2.3} whose right-hand sides satisfy conditions
\eqref{9f4.5}, \eqref{9f4.6} for $\sigma:=p+b+n/2$ and given $\varphi\in\mathcal{M}$ complies with the conclusion of Theorem~\ref{9th4.4}, then $\varphi$ meets condition \eqref{9f4.7}.
\end{remark}

\begin{remark}\label{9rem4.6}\rm
If we formulate an analog of Theorem~\ref{9th4.4} for the Sobolev case of $\varphi\equiv\nobreak1$, we have to change the condition of this theorem for a stronger one. Namely, we have to claim that the right-hand sides of the problem \eqref{9f2.1}--\eqref{9f2.3} satisfy conditions \eqref{9f4.5}, \eqref{9f4.6} for certain $\sigma>p+b+n/2$.
\end{remark}

In Section~\ref{9sec8}, we will deduce Theorem \ref{9th4.3} from Isomorphism Theorem~\ref{9th4.1} and will show that Theorem~\ref{9th4.4} is a consequence of Theorem~\ref{9th4.3} and a version of H\"ormander's embedding theorem \cite[Theorem 2.2.7]{Hormander63}. We will also justify Remark~\ref{9rem4.5}.

\section{Isomorphism Theorem in the Sobolev case}\label{9sec5}

The goal of this section is to show that Theorem \ref{9th4.1} follows from the above-mentioned result by M.~S.~Agranovich and M.~I.~Vishik \cite[Theorem 12.1]{AgranovichVishik64} in the Sobolev case where $\varphi(r)\equiv1$ and $\sigma/(2b)\in\mathbb{Z}$. Beforehand, we will prove a lemma about a description of the spaces
$H^{s,s\gamma}_{+}(\Omega)$ and $H^{s,s\gamma}_{+}(S)$ in terms of the Hilbert spaces $H^{s,s\gamma}(\Omega)$ and $H^{s,s\gamma}(S)$ used in this result. The latter two are defined quite similarly to the first two, with no restrictions on support of distributions being imposed. For the reader's convenience, we give the relevant definitions.

Let real $s>0$ and  $\gamma>0$. Suppose that $V$ is an open nonempty set in $\mathbb{R}^{k+1}$, with $k\geq1$. We put
\begin{equation}\label{9f5.1}
H^{s,s\gamma}(V):=\bigl\{w\!\upharpoonright\!V:\,
w\in H^{s,s\gamma}(\mathbb{R}^{k+1})\bigl\}.
\end{equation}
The norm in the linear space \eqref{9f5.1} is defined by the formula
\begin{equation}\label{9f5.2}
\|u\|_{H^{s,s\gamma}(V)}:=
\inf\bigl\{\,\|w\|_{H^{s,s\gamma}(\mathbb{R}^{k+1})}:\\
w\in H^{s,s\gamma}(\mathbb{R}^{k+1}),\;\,
u=w\!\upharpoonright\!V\bigl\},
\end{equation}
with $u\in H^{s,s\gamma}(V)$. This space is Hilbert with respect to the norm \eqref{9f5.2}. We are interested in the case where $V=\Omega$, with $k=n$, and also in the case where $V=\Pi$, with $\Pi:=\mathbb{R}^{n-1}\times(0,\tau)$ and $k=n-1$. Using $H^{s,s\gamma}(\Pi)$, we now define the Hilbert space $H^{s,s\gamma}(S)$ by formulas \eqref{9f3.7}, \eqref{9f3.8} in which we omit the subscript~$+$. The anisotropic Sobolev spaces just defined are well known in the theory of parabolic equations \cite{AgranovichVishik64, EidelmanZhitarashu98, LionsMagenes72ii, Slobodeckii58}.

We have the continuous embeddings
\begin{equation}\label{9f5.3}
H^{s,s\gamma}_{+}(\Omega)\hookrightarrow H^{s,s\gamma}(\Omega)
\quad\mbox{and}\quad
H^{s,s\gamma}_{+}(S)\hookrightarrow H^{s,s\gamma}(S).
\end{equation}
They follow immediately from the definitions of the spaces appearing in~\eqref{9f5.3}.

Besides, we let $H^{\theta}(U)$ denote the isotropic Sobolev inner product space of order $\theta$ over a Euclidean domain~$U$; specifically, $H^{0}(U)=L_{2}(U)$.

We need the following version of the result by M.~S.~Agranovich and M.~I.~Vishik \cite[Proposition~8.1]{AgranovichVishik64}.

\begin{lemma}\label{9lem5.1}
Let $s>0$, $\gamma>0$, and $s\gamma-1/2\notin\mathbb{Z}$. Then the space
$H^{s,s\gamma}_{+}(\Omega)$ consists of all functions $u\in H^{s,s\gamma}(\Omega)$ such that
\begin{equation}\label{9f5.4}
\begin{aligned}
\partial^k_t u(x,t)\big|_{t=0}=0\quad&\mbox{for almost all}\quad x\in G\\
&\mbox{whenever}\quad k\in\mathbb{Z}
\quad\mbox{and}\quad 0\leq k<s\gamma-1/2.
\end{aligned}
\end{equation}
Moreover, the norm in $H^{s,s\gamma}_{+}(\Omega)$ is equivalent to the norm in $H^{s,s\gamma}(\Omega)$. This lemma remains valid if we replace $\Omega$ by $S$ and $G$ by $\Gamma$.
\end{lemma}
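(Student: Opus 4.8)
The plan is to read the $\Omega$-statement as a trace characterization and then transfer it to $S$ by localization. The core ingredient is a half-space model lemma: for $s>0$, $\gamma>0$ with $s\gamma-1/2\notin\mathbb{Z}$, a function in $H^{s,s\gamma}(\mathbb{R}^{k}\times(0,\infty))$ extends by zero to a distribution in $H^{s,s\gamma}(\mathbb{R}^{k+1})$ with support in $\{t\geq0\}$ if and only if all its time-traces $\partial_t^j u|_{t=0}$ vanish for integers $0\leq j<s\gamma-1/2$, with equivalence of the two norms. This is precisely the anisotropic zero-extension statement underlying \cite[Prop.~8.1]{AgranovichVishik64}; in the present Fourier normalization it follows by treating $t$ as a one-dimensional variable of weighted order $s\gamma$ through $r_{\gamma}(\xi,\eta)$ and invoking the classical one-variable trace theory, where the hypothesis $s\gamma-1/2\notin\mathbb{Z}$ keeps every relevant trace away from the borderline half-integer order.

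First I would use this model lemma, together with the anisotropic trace theorem, to establish that for $u\in H^{s,s\gamma}(\Omega)$ the traces $\partial_t^k u|_{t=0}$ are well defined and depend continuously on $u$ for each integer $k$ with $0\leq k<s\gamma-1/2$. The inclusion of $H^{s,s\gamma}_{+}(\Omega)$ in the set $\{u:\partial_t^k u|_{t=0}=0\}$ is then immediate: if $u=w\!\upharpoonright\!\Omega$ with $\mathrm{supp}\,w\subseteq\mathbb{R}^n\times[0,\infty)$, then $w$ vanishes for $t<0$, so its two-sided traces at $t=0$ are zero. This simultaneously yields the inequality $\|u\|_{H^{s,s\gamma}(\Omega)}\leq\|u\|_{H^{s,s\gamma}_{+}(\Omega)}$ from \eqref{9f5.3} and the infimum in \eqref{9f5.2}.

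For the reverse inclusion, given $u\in H^{s,s\gamma}(\Omega)$ with the stated vanishing traces, I would first apply a bounded linear extension operator $E$ from $H^{s,s\gamma}(\Omega)$ into $H^{s,s\gamma}(\mathbb{R}^n\times(0,\infty))$, constructed so that $Eu$ coincides with $u$ near $t=0$; hence the conditions $\partial_t^k(Eu)|_{t=0}=0$ are preserved. The model lemma then permits extension of $Eu$ by zero across $t=0$ to a function $w\in H^{s,s\gamma}(\mathbb{R}^{n+1})$ with $\mathrm{supp}\,w\subseteq\mathbb{R}^n\times[0,\infty)$ and $w\!\upharpoonright\!\Omega=u$, with $\|w\|_{H^{s,s\gamma}(\mathbb{R}^{n+1})}\leq C\,\|u\|_{H^{s,s\gamma}(\Omega)}$. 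This gives both $u\in H^{s,s\gamma}_{+}(\Omega)$ and the complementary norm inequality, completing the equivalence.

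Finally I would transfer the result to $S$ through the defining formulas \eqref{9f3.7}--\eqref{9f3.8} and Lemma~\ref{9lem3.1}: the special charts $\theta_j^{*}$ and the partition of unity $\chi_j^{*}$ act only in the $x$-variable at fixed $t$, so $\partial_t^k$ commutes with the pullbacks $(\chi_j^{*}v)\circ\theta_j^{*}$ and the vanishing-trace conditions pass between $S$ and $\Pi=\mathbb{R}^{n-1}\times(0,\tau)$ unchanged. The $S$-statement thus reduces to the already-proved case $V=\Pi$ with $k=n-1$. The main obstacle is the model lemma itself: verifying that zero-extension across $t=0$ is bounded on the anisotropic scale exactly when the traces vanish, and that the exceptional half-integer orders excluded by $s\gamma-1/2\notin\mathbb{Z}$ are precisely the ones where this fails; the localization to $\Omega$ and $S$ and the trace estimates are then routine.
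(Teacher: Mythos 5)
Your skeleton coincides with the paper's proof at every structural joint (the anisotropic trace theorem gives well-posedness of \eqref{9f5.4} and the forward inclusion; an extension followed by zero-extension across $t=0$ gives the converse; the $S$-case is reduced chart-wise to $\Pi=\mathbb{R}^{n-1}\times(0,\tau)$), but the load-bearing step --- your ``half-space model lemma'' for general real $s>0$ and $\gamma>0$ --- is asserted rather than proved, and the citation you lean on does not cover it. The paper itself points out that Proposition~8.1 of \cite{AgranovichVishik64} is restricted to $s\in\mathbb{Z}$ and $\gamma=1/(2b)$, so at the generality of Lemma~\ref{9lem5.1} it cannot simply be invoked; and ``treating $t$ as a one-dimensional variable of weighted order $s\gamma$ through $r_{\gamma}(\xi,\eta)$'' is not an argument, because the anisotropic norm does not factor in the Fourier variables. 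The paper's actual mechanism is the tensor-product decomposition $H^{s,s\gamma}=H^{s}\otimes L_{2}\cap L_{2}\otimes H^{s\gamma}$ (formulas \eqref{9f5.11}, \eqref{9f5.12}), which reduces the whole matter to the genuinely one-dimensional zero-extension isomorphism $O:\Upsilon^{s\gamma}((0,\infty))\leftrightarrow H^{s\gamma}_{+}(\mathbb{R})$ taken from \cite[Theorems 2.9.3(a) and 2.10.3(b)]{Triebel95}; this is exactly where $s\gamma-1/2\notin\mathbb{Z}$ enters. Without this (or an equivalent) reduction, your model lemma is an unproven claim of essentially the same depth as the lemma you are trying to prove --- you even flag it as ``the main obstacle,'' but the sketch you give for it does not close it.

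There is a second, smaller gap in the converse direction: an extension operator $E:H^{s,s\gamma}(\Omega)\to H^{s,s\gamma}(\mathbb{R}^{n}\times(0,\infty))$ with ``$Eu$ coinciding with $u$ near $t=0$'' cannot exist as stated, since $Eu$ is defined for all $x\in\mathbb{R}^{n}$ while $u$ is not; and for a generic anisotropic extension operator there is no reason that $\partial_t^k u|_{t=0}=0$ a.e.\ on $G$ should force $\partial_t^k(Eu)|_{t=0}=0$ a.e.\ on $\mathbb{R}^{n}$. The paper sidesteps this by tensorizing: it uses $T_{G}\otimes(OT_{\tau})$, where $T_{\tau}$ extends in $t$ alone (so $T_\tau v=v$ on $(0,\tau)$ and the time-traces at $0$ are literally unchanged) and $T_{G}$ acts only in $x$ (so it commutes with time-traces), making the preservation automatic and reducing boundedness to the isotropic extension theorems \cite[Theorems 4.2.2 and 4.2.3]{Triebel95}. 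With the model lemma proved by this tensorization and your $E$ replaced by $T_{G}\otimes(OT_{\tau})$, your outline becomes the paper's proof; your transfer to $S$ via the charts $\theta_j^{*}$, with $\partial_t$ commuting with the pullbacks, is correct and matches the paper, as is your observation that one gets explicit two-sided norm bounds rather than the paper's softer appeal to the Banach inverse-operator theorem.
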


Lemma~\ref{9lem5.1} is close to the result by M.~S.~Agranovich and M.~I.~Vishik \cite[Proposition~8.1]{AgranovichVishik64}. They found necessary and sufficient conditions under which the extension of every  function $u\in H^{s,s\gamma}(G\times(0,\theta))$ by zero belongs to $H^{s,s\gamma}(G\times(-\infty,\theta))$ with $0<\theta\leq\infty$, they restricting themselves to the case where $s\in\mathbb{Z}$ and $\gamma=1/(2b)$. These conditions are tantamount to \eqref{9f5.4} and imply equivalence of the norms of $u$ and its extension by zero. M.~S.~Agranovich and M.~I.~Vishik also considered the case of functions given on $\Gamma\times(0,\theta)$

\begin{proof}[Proof of Lemma $\ref{9lem5.1}$.]
We note first that condition \eqref{9f5.4} is well posed by virtue of the trace theorem for anisotropic Sobolev spaces (see, e.g., \cite[Part~II, Theorem~4]{Slobodeckii58}). Let $\Upsilon^{s,s\gamma}(\Omega)$ denote the linear manifold of all functions $u\in H^{s,s\gamma}(\Omega)$ that satisfy~\eqref{9f5.4}. According to this trace theorem, we may and will consider $\Upsilon^{s,s\gamma}(\Omega)$ as a (closed) subspace of $H^{s,s\gamma}(\Omega)$. It follows directly from \eqref{9f5.3} that we have the continuous embedding $H^{s,s\gamma}_{+}(\Omega)\hookrightarrow \Upsilon^{s,s\gamma}(\Omega)$. Therefore (in view of the Banach theorem on inverse operator) it remains to prove the converse inclusion $\Upsilon^{s,s\gamma}(\Omega)\subset H^{s,s\gamma}_{+}(\Omega)$.

Let $u\in\Upsilon^{s,s\gamma}(\Omega)$. We must prove that $u=w$ on $\Omega$ for a certain function $w\in H^{s,s\gamma}_{+}(\mathbb{R}^{n+1})$. To this end, we use the following three extension operators $O$, $T_{\tau}$, and $T_{G}$ acting between some isotropic Sobolev spaces.

Given a function $v\in L_{2}((0,\infty))$, we define the function $Ov\in L_{2}(\mathbb{R})$ by the formulas $(Ov)(t):=v(t)$ for $t>0$ and $(Ov)(t):=0$ for $t\leq0$. Thus, we introduce a bounded linear operator
\begin{equation}\label{9f5.5}
O:L_{2}((0,\infty))\to L_{2}(\mathbb{R}).
\end{equation}
Let $\Upsilon^{s\gamma}((0,\infty))$ denote the linear manifold of all functions $v\in H^{s\gamma}((0,\infty)$ such that $v^{(k)}(0)=\nobreak0$ whenever $k\in\mathbb{Z}$ satisfies $0\leq k<s\gamma-1/2$. By the trace theorem, $\Upsilon^{s\gamma}((0,\infty))$ is a subspace of
$H^{s\gamma}((0,\infty)$. It follows directly from \cite[Theorems 2.9.3(a) and 2.10.3(b)]{Triebel95} and the condition $s\gamma-1/2\notin\mathbb{Z}$ that the restriction of \eqref{9f5.5} to $H^{s\gamma}((0,\infty))$ defines an isomorphism
\begin{equation}\label{9f5.6}
O:\Upsilon^{s\gamma}((0,\infty))\leftrightarrow H^{s\gamma}_{+}(\mathbb{R}).
\end{equation}
Here, $H^{s\gamma}_{+}(\mathbb{R})$ consists, by definition, of all functions $v\in H^{s\gamma}(\mathbb{R})$ with $\mathrm{supp}\,v\subseteq[0,\infty)$ and is regarded as a subspace of $H^{s\gamma}(\mathbb{R})$.

We consider a bounded linear operator
\begin{equation}\label{9f5.7}
T_{\tau}:L_{2}((0,\tau))\rightarrow L_{2}((0,\infty))
\end{equation}
such that $T_{\tau}v=v$ on $(0,\tau)$ for every function $v\in L_{2}((0,\tau))$ and that
the restriction of the mapping $T_{\tau}$ to the space $H^{s\gamma}((0,\tau))$ is a bounded operator
\begin{equation}\label{9f5.8}
T_{\tau}:H^{s\gamma}((0,\tau))\rightarrow H^{s\gamma}((0,\infty)).
\end{equation}
We also consider a bounded linear operator
\begin{equation}\label{9f5.9}
T_{G}:L_{2}(G)\rightarrow L_{2}(\mathbb{R}^{n})
\end{equation}
such that $T_{G}h=h$ on $G$ for every function $h\in L_{2}(G)$ and that
the restriction of the mapping $T_{G}$ to the space $H^{s}(G)$ is a bounded operator
\begin{equation}\label{9f5.10}
T_{G}:H^{s}(G)\rightarrow H^{s}(\mathbb{R}^{n}).
\end{equation}
Operators of this kind exist \cite[Theorems 4.2.2 and 4.2.3]{Triebel95}.

It is known that
\begin{equation}\label{9f5.11}
H^{s,s\gamma}(\Omega)=H^{s}(G)\otimes L_{2}((0,\tau))\cap
L_{2}(G)\otimes H^{s\gamma}((0,\tau))
\end{equation}
and
\begin{equation}\label{9f5.12}
H^{s,s\gamma}(\mathbb{R}^{n+1})=
H^{s}(\mathbb{R}^{n})\otimes L_{2}(\mathbb{R})\cap
L_{2}(\mathbb{R}^{n})\otimes H^{s\gamma}(\mathbb{R})
\end{equation}
up to equivalence of norms; see, e.g., \cite[\S~8, Subection~1]{AgranovichVishik64}. (As usual, $E\otimes F$ denotes the tensor product of arbitrary Hilbert spaces $E$ and $F$. Besides, their intersection $E\cap F$ is considered as a Hilbert space endowed with the inner product $(v_{1},v_{2})_{E\cap F}:=
(v_{1},v_{2})_{E}+(v_{1},v_{2})_{F}$ of vectors $v_{1},v_{2}\in E\cap F$.)

It follows directly from \eqref{9f5.7}, \eqref{9f5.8}, \eqref{9f5.11} and the inclusion $u\in\Upsilon^{s,s\gamma}(\Omega)$ that
\begin{equation*}
(I\otimes T_{\tau})u\in H^{s}(G)\otimes L_{2}((0,\infty))\cap
L_{2}(G)\otimes \Upsilon^{s\gamma}((0,\infty)).
\end{equation*}
Here, $I$ is the identity operator on $L_{2}(G)$. Then
\begin{equation}\label{9f5.13}
\begin{aligned}
w&:=(T_{G}\otimes(OT_{\tau}))u=(T_{G}\otimes O)(I\otimes T_{\tau})w\\
&\in H^{s}(\mathbb{R}^{n})\otimes L_{2}(\mathbb{R})\cap L_{2}(\mathbb{R}^{n})\otimes H^{s\gamma}_{+}(\mathbb{R})=
H^{s,s\gamma}_{+}(\mathbb{R}^{n+1})
\end{aligned}
\end{equation}
in view of formulas \eqref{9f5.5}, \eqref{9f5.6}, \eqref{9f5.9}, \eqref{9f5.10}, and \eqref{9f5.12}. Besides, $w=u$ on $\Omega$. Thus, $u\in H^{s,s\gamma}_{+}(\Omega)$.

The same reasoning shows that Lemma~\ref{9lem5.1} remains valid if we replace $\Omega$ by $\Pi:=\mathbb{R}^{n-1}\times(0,\tau)$ and $G$ by $\mathbb{R}^{n-1}$. (Of course, we take $\mathbb{R}^{n}$ instead of $\mathbb{R}^{n+1}$ and need not use the extension operator $T_{G}$ in this case.) It follows directly from this fact and the definitions of $H^{s,s\gamma}_{+}(S)$ and $H^{s,s\gamma}(S)$ that Lemma~\ref{9lem5.1} also remains valid if we replace $\Omega$ by $S$ and $G$ by~$\Gamma$.
\end{proof}

Discussing Theorem \ref{9th4.1} in this section, we restrict ourselves to the Sobolev case where $\varphi(r)\equiv1$ and $\sigma/(2b)\in\mathbb{Z}$ and suppose that $\sigma\geq\sigma_{0}$. Let us show that Theorem \ref{9th4.1} in this case follows from M.~S.~Agranovich and M.~I.~Vishik's result \cite[Theorem 12.1]{AgranovichVishik64}.

We consider the parabolic initial-boundary value problem \eqref{9f2.1}--\eqref{9f2.3} for
arbitrarily chosen right-hand sides
\begin{equation}\label{9f5.14}
(f,g_1,\ldots,g_m)\in
\mathcal{H}_{+}^{\sigma-2m,(\sigma-2m)/(2b)}(\Omega,S).
\end{equation}
Of course, the equalities and partial derivatives appearing in this problem are interpreted in the sense of the theory of distributions. The vector \eqref{9f5.14} satisfies the compatibility condition \cite[\S~11]{AgranovichVishik64} in the case of zero initial data \eqref{9f2.3}. M.~S.~Agranovich and M.~I.~Vishik's theorem \cite[Theorem~12.1]{AgranovichVishik64} asserts in view of \eqref{9f5.3} that the problem \eqref{9f2.1}--\eqref{9f2.3} has a unique solution $u\in H^{\sigma,\sigma/(2b)}(\Omega)$ and that this solution satisfies the two-sided estimate
\begin{equation}\label{9f5.15}
\begin{aligned}
\|u\|_{H^{\sigma,\sigma/(2b)}(\Omega)}&\leq c_1
\|(f,g_1,\ldots,g_m)\|_
{\mathcal{H}^{\sigma-2m,(\sigma-2m)/(2b)}(\Omega,S)}\\
&\leq c_2\,\|u\|_{H^{\sigma,\sigma/(2b)}(\Omega)}
\end{aligned}
\end{equation}
with some positive numbers $c_1$ and $c_2$ being independent of \eqref{9f5.14} and $u$. Here, we put
\begin{gather*}
\mathcal{H}^{\sigma-2m,(\sigma-2m)/(2b)}(\Omega,S)\\
:=H^{\sigma-2m,(\sigma-2m)/(2b)}(\Omega)\oplus
\bigoplus_{j=1}^{m}H^{\sigma-m_j-1/2,(\sigma-m_j-1/2)/(2b)}(S).
\end{gather*}

Let us show that $u\in H^{\sigma,\sigma/(2b)}_{+}(\Omega)$. To this end, we use Lemma~\ref{9lem5.1} with $s:=\sigma$ and $\gamma:=1/(2b)$. According to \eqref{9f2.3}, the function $u$ satisfies condition \eqref{9f5.4} provided that $0\leq k\leq\varkappa-1$. Here,
\begin{equation*}
\varkappa-1=\frac{2m}{2b}-1<\frac{\sigma}{2b}-\frac{1}{2}.
\end{equation*}
The fulfilment of \eqref{9f5.4} for the rest values of the integer $k$ (when $\varkappa-1<k<\sigma/(2b)-1/2$) is proved (if these values exist) in the following way.

Let the number of these values is $l\geq1$; then
\begin{equation}\label{9f5.16}
\varkappa+l-1<\frac{\sigma}{2b}-\frac{1}{2}<\varkappa+l.
\end{equation}
The parabolicity Condition \ref{9cond2.1} in the case of $\xi=0$ and $p=1$ means that the coefficient $a^{(0,\ldots,0),\varkappa}(x,t)\neq0$ for all $x\in\overline{G}$ and $t\in[0,\tau]$. Therefore we can resolve the parabolic equation \eqref{9f2.1} with respect to $\partial^\varkappa_t
u(x,t)$; namely, we can write
\begin{equation}\label{9f5.17}
\partial^\varkappa_t u(x,t)=
\sum_{\substack{|\alpha|+2b\beta\leq 2m,\\ \beta\leq\varkappa-1}}
a_{0}^{\alpha,\beta}(x,t)\,D^\alpha_x\partial^\beta_t
u(x,t)+f(x,t)
\end{equation}
for some functions $a_{0}^{\alpha,\beta}\in C^{\infty}(\overline{\Omega})$. If $l\geq2$, then we differentiate the equality \eqref{9f5.17} $l-1$ times with respect to $t$ and obtain $l-1$ equalities
\begin{equation}\label{9f5.18}
\begin{gathered}
\partial^{\varkappa+j}_{t}u(x,t)=
\sum_{\substack{|\alpha|+2b\beta\leq 2m+2bj,\\
|\alpha|\leq2m,\;\beta\leq\varkappa+j-1}}
a_{j}^{\alpha,\beta}(x,t)\,D^\alpha_x\partial^\beta_t u(x,t)+ \partial^{j}_{t}f(x,t),\\
\mbox{with}\quad j=1,\ldots,l-1.
\end{gathered}
\end{equation}
Here, each $a_{j}^{\alpha,\beta}(x,t)$ is a certain function from $C^{\infty}(\overline{\Omega})$. The equalities \eqref{9f5.17} and
\eqref{9f5.18} are considered on $\Omega=\{(x,t):x\in G,0<t<\tau\}$.
Since $f\in H_{+}^{\sigma-2m,(\sigma-2m)/(2b)}(\Omega)$, then
$\partial^{j}_{t}f(x,t)\big|_{t=0}=0$ for almost all $x\in G$ and for each integer $j\in\{0,\ldots,l-1\}$ by virtue of Lemma~\ref{9lem5.1} and \eqref{9f5.16}. Using these equalities, we deduce successively from \eqref{9f5.17} and \eqref{9f5.18} that $\partial^{\varkappa+j}_{t}u(x,t)\big|_{t=0}=0$ for the same $x$ and $j$.

Thus, the function $u\in H^{\sigma,\sigma/(2b)}(\Omega)$ satisfies condition \eqref{9f5.4}. Therefore $u\in H_{+}^{\sigma,\sigma/(2b)}(\Omega)$ due to Lemma~\ref{9lem5.1}. Moreover, according to this lemma and formulas \eqref{9f5.14} and \eqref{9f5.15}, we can write
\begin{equation}\label{9f5.19}
\begin{aligned}
\|u\|_{H_{+}^{\sigma,\sigma/(2b)}(\Omega)}&\leq c_3
\|(f,g_1,\ldots,g_m)\|_
{\mathcal{H}_{+}^{\sigma-2m,(\sigma-2m)/(2b)}(\Omega,S)}\\
&\leq c_4\,\|u\|_{H_{+}^{\sigma,\sigma/(2b)}(\Omega)}.
\end{aligned}
\end{equation}
Here, $c_3$ and $c_4$ are some positive numbers that do not depend on \eqref{9f5.14} and~$u$.

Thus, we conclude that for an arbitrary vector \eqref{9f5.14} there exists a unique solution $u\in H_{+}^{\sigma,\sigma/(2b)}(\Omega)$ of the parabolic problem \eqref{9f2.1}--\eqref{9f2.3} and that this solution satisfies~\eqref{9f5.19}. Evidently, this conclusion is equivalent to Theorem \ref{9th4.1} in the Sobolev case under consideration. Therefore Theorem \ref{9th4.1} in this case is a consequence of M.~S.~Agranovich and M.~I.~Vishik's result \cite[Theorem 12.1]{AgranovichVishik64}.

Completing this section, we will show that the mapping \eqref{9f2.4} is   a bijection
\begin{equation}\label{9f5.20}
(A,B):C^{\infty}_{+}(\overline{\Omega})\leftrightarrow
C^{\infty}_{+}(\overline{\Omega})\times
\bigl(C^{\infty}_{+}(\overline{S})\bigr)^{m},
\end{equation}
as we asserted at the end of Section~\ref{9sec2}. Let us use Theorem \ref{9th4.1} in the Sobolev case just considered, namely, where $\sigma=2bl>\sigma_{0}$ with $l\in\mathbb{Z}$. The mapping \eqref{9f2.4} is injective by this theorem. It remains to show that \eqref{9f2.4} is surjective. Let a vector $(f,g_1,\ldots,g_m)$ satisfy~\eqref{9f5.14}. According to this theorem, there exists a unique function
\begin{equation}\label{9f5.21}
u\in\bigcap_{l\in\mathbb{Z},\,2bl>\sigma_{0}}H_{+}^{2bl,l}(\Omega)
\end{equation}
such that $(A,B)u=(f,g_1,\ldots,g_m)$. To deduce the desired inclusion $u\in C^{\infty}_{+}(\overline{\Omega})$ from \eqref{9f5.21}, we use the extension operators $O$, $T_{\tau}$, and $T_{G}$ from the proof of Lemma~\ref{9lem5.1}. We can suppose that the mappings $T_{\tau}$ and $T_{G}$ are independent of $s>0$. The extension operators of this kind are constructed, e.g., in \cite{Rychkov99}. Then, according to \eqref{9f5.13} (with $s=2bl$ and $\gamma=1/(2b)$) and \eqref{9f5.21}, we can write
\begin{equation}\label{9f5.22}
w:=(T_{G}\otimes(OT_{\tau}))u\in
\bigcap_{l\in\mathbb{Z},\,2bl>\sigma_{0}}
H^{2bl,l}_{+}(\mathbb{R}^{n+1})\subset C^{\infty}(\mathbb{R}^{n+1}).
\end{equation}
Here, we note that $u\in\Upsilon^{2bl,l}(\Omega)$ by Lemma~\ref{9lem5.1} for $l$ indicated in \eqref{9f5.22}, the space $\Upsilon^{2bl,l}(\Omega)$ being defined in the proof of this lemma. We also remark that the letter inclusion in \eqref{9f5.22} holds true by the Sobolev embedding theorem (see also \cite[Part~II, Theorem~13]{Slobodeckii58}). Hence, $u=\nobreak w\!\upharpoonright\!\Omega\in C^{\infty}_{+}(\overline{\Omega})$. Thus, the mapping \eqref{9f2.4} is surjective. We have justified the bijection~\eqref{9f5.20}.

\section{Interpolation with a function parameter}\label{9sec6}

In this section we discuss the method of the interpolation with a function parameter between Hilbert spaces, which was introduced by C.~Foia\c{s} and J.-L.~Lions \cite[p.~278]{FoiasLions61}. This interpolation is a natural generalization of the classical interpolation method by S.~G.~Krein and J.-L.~Lions to the case where a sufficiently general function is used instead of a number as an interpolation parameter (see, e.g., monographs \cite[Chapter~IV, Section~1, Subsection~10]{KreinPetuninSemenov82} and \cite[Chapter~1, Sections 2 and 5]{LionsMagenes72i}). Interpolation with a function parameter will play a key role in the proof of Isomorphism Theorem. It is sufficient for our purposes to restrict the discussion to the case of separable complex Hilbert spaces. We follow the monograph \cite[Section~1.1]{MikhailetsMurach14}, which   systematically sets forth this interpolation (see also \cite[Section~2]{MikhailetsMurach08MFAT1}).

Let $X:=[X_{0},X_{1}]$ be an ordered pair of separable complex Hilbert spaces such that $X_{1}\subseteq X_{0}$ and this embedding is continuous and dense. This pair is said to be admissible. There exists a positive-definite self-adjoint operator $J$ on $X_{0}$ that has the domain $X_{1}$ and that satisfies the condition $\|Jv\|_{X_{0}}=\|v\|_{X_{1}}$ for every $v\in X_{1}$. This operator is uniquely determined by the pair $X$ and is called a generating operator for~$X$ (see, e.g., \cite[Chapter~IV, Theorem~1.12]{KreinPetuninSemenov82}). It defines an isometric isomorphism $J:X_{1}\leftrightarrow X_{0}$.

Let $\mathcal{B}$ denote the set of all Borel measurable functions $\psi:(0,\infty)\rightarrow(0,\infty)$ such that $\psi$ is bounded on each compact interval $[a,b]$, with $0<a<b<\infty$, and that $1/\psi$ is bounded on every semiaxis $[a,\infty)$, with $a>0$.

Given a function $\psi\in\mathcal{B}$, we consider the (generally, unbounded) operator $\psi(J)$, which is defined on $X_{0}$ as the Borel function $\psi$ of $J$. This operator is built with the help of Spectral Theorem applied to the self-adjoint operator $J$. Let $[X_{0},X_{1}]_{\psi}$ or, simply, $X_{\psi}$ denote the domain of the operator $\psi(J)$ endowed with the inner product
\begin{equation*}
(v_{1},v_{2})_{X_{\psi}}:=(\psi(J)v_{1},\psi(J)v_{2})_{X_{0}}.
\end{equation*}
The linear space $X_{\psi}$ is Hilbert and separable with respect to this inner product. The latter induces the norm $\|v\|_{X_{\psi}}:=\|\psi(J)v\|_{X_{0}}$.

A function $\psi\in\mathcal{B}$ is called an interpolation parameter if the following condition is fulfilled for all admissible pairs $X=[X_{0},X_{1}]$ and $Y=[Y_{0},Y_{1}]$ of Hilbert spaces and for an arbitrary linear mapping $T$ given on $X_{0}$: if the restriction of $T$ to $X_{j}$ is a bounded operator $T:X_{j}\rightarrow Y_{j}$ for each $j\in\{0,1\}$, then the restriction of $T$ to
$X_{\psi}$ is also a bounded operator $T:X_{\psi}\rightarrow Y_{\psi}$.

If $\psi$ is an interpolation parameter, then we say that the Hilbert space $X_{\psi}$ is obtained by the interpolation with the function parameter $\psi$ of the pair $X=\nobreak[X_{0},X_{1}]$ (or, otherwise speaking, between the spaces $X_{0}$ and $X_{1}$). In this case, we have the dense and continuous embeddings $X_{1}\hookrightarrow
X_{\psi}\hookrightarrow X_{0}$.

It is known that a function $\psi\in\mathcal{B}$ is an interpolation parameter if and only if $\psi$ is pseudoconcave in a neighbourhood of infinity, i.e. there exists a concave positive function $\psi_{1}(r)$ of $r\gg1$ such that both the functions $\psi/\psi_{1}$ and $\psi_{1}/\psi$ are bounded in some neighbourhood of infinity. This criterion follows from J.~Peetre's \cite{Peetre66, Peetre68} description of all interpolation functions for the weighted $L_{p}(\mathbb{R}^{n})$-type spaces (this result of J.~Peetre is also set forth in the monograph \cite[Theorem 5.4.4]{BerghLefstrem76}). The proof of the criterion is given in \cite[Section 1.1.9]{MikhailetsMurach14}.

We will use the next consequence of this criterion \cite[Theorem
1.11]{MikhailetsMurach14}.

\begin{proposition}\label{9prop6.1}
Suppose that a function $\psi\in\mathcal{B}$ varies regularly of index $\theta$ at infinity, with $0<\theta<1$, i.e.
$$
\lim_{r\rightarrow\infty}\;\frac{\psi(\lambda r)}{\psi(r)}=
\lambda^{\theta}\quad\mbox{for every}\quad\lambda>0.
$$
Then $\psi$ is an interpolation parameter.
\end{proposition}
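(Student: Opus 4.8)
The plan is to reduce Proposition~\ref{9prop6.1} to the pseudoconcavity criterion stated just before it, so the entire task becomes showing that a function $\psi\in\mathcal{B}$ which varies regularly of index $\theta$ at infinity, with $0<\theta<1$, is pseudoconcave in a neighbourhood of infinity. By the criterion, once this is established, $\psi$ is automatically an interpolation parameter. Thus no direct verification of the interpolation property (boundedness of $T:X_\psi\to Y_\psi$) is needed; the work is entirely a statement about the asymptotic shape of $\psi$.

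First I would record the structure theorem for regularly varying functions (Karamata's representation): a function varying regularly of index $\theta$ can be written as $\psi(r)=r^{\theta}\,\ell(r)$, where $\ell$ is slowly varying at infinity, i.e. $\ell\in\mathcal{M}$ after restriction to $[1,\infty)$. The key quantitative input is the bound already quoted in the paper for slowly varying functions: for every $\varepsilon>0$ there is $c=c(\varepsilon)\geq1$ with $c^{-1}r^{-\varepsilon}\leq\ell(r)\leq c\,r^{\varepsilon}$ for all large $r$. The plan is to choose $\varepsilon$ small enough that $\theta-\varepsilon>0$ and $\theta+\varepsilon<1$; this is possible precisely because $0<\theta<1$, and this is where the hypothesis on the index is used essentially.

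Next I would exhibit an explicit concave comparison function. The natural candidate is $\psi_1(r):=r^{\theta_1}$ for some exponent $\theta_1$ with $\theta<\theta_1<1$ (or, symmetrically, a power just below $\theta$); a single power $r^{\theta_1}$ with $0<\theta_1<1$ is concave on $(0,\infty)$. The task is then to show that $\psi/\psi_1$ and $\psi_1/\psi$ are bounded near infinity. Writing $\psi(r)/\psi_1(r)=r^{\theta-\theta_1}\ell(r)$ and applying the slowly-varying bound with $\varepsilon<\theta_1-\theta$ gives an upper bound of the form $c\,r^{\theta-\theta_1+\varepsilon}$ with a negative exponent, hence boundedness; the reciprocal ratio is handled the same way with the opposite inequality. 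One can symmetrize by working with two powers bracketing $\theta$, or simply argue that the product/quotient estimates deliver both directions. I would then invoke the pseudoconcavity criterion to conclude.

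The main obstacle is essentially bookkeeping rather than conceptual: one must be careful that the Karamata representation and the $\varepsilon$-bound are applied only for large $r$ (consistent with the ``neighbourhood of infinity'' in the criterion), and that the chosen comparison power $\psi_1$ is genuinely concave, which forces its exponent to lie in $(0,1]$ and hence forces the use of $\theta<1$. The delicate point to state cleanly is that both ratios $\psi/\psi_1$ and $\psi_1/\psi$ are bounded \emph{simultaneously}; this is where I would take $\varepsilon$ strictly smaller than $\min\{\theta_1-\theta,\,\theta\}$ so that both resulting exponents have the correct sign. Since the criterion is quoted as already proved in \cite{MikhailetsMurach14}, the proof of the proposition is short and amounts to producing this single concave majorant-minorant $\psi_1$.
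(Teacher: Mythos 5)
Your reduction to the pseudoconcavity criterion is the right frame --- the paper itself offers no in-text proof of Proposition~\ref{9prop6.1}, stating it as a consequence of the criterion and citing \cite[Theorem~1.11]{MikhailetsMurach14} --- but your verification of pseudoconcavity fails at its central step. The criterion demands a \emph{single} concave positive function $\psi_{1}$ with \emph{both} $\psi/\psi_{1}$ and $\psi_{1}/\psi$ bounded near infinity, i.e.\ a two-sided equivalence $\psi\asymp\psi_{1}$. Your candidate $\psi_{1}(r)=r^{\theta_{1}}$ with $\theta<\theta_{1}<1$ delivers only one of the two bounds: writing $\psi(r)=r^{\theta}\ell(r)$ with $\ell$ slowly varying and taking $\varepsilon<\theta_{1}-\theta$, you correctly get $\psi(r)/\psi_{1}(r)\leq c\,r^{\theta-\theta_{1}+\varepsilon}\rightarrow0$, but then the reciprocal satisfies $\psi_{1}(r)/\psi(r)\geq c^{-1}r^{\theta_{1}-\theta-\varepsilon}\rightarrow\infty$, so it is unbounded. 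A power below $\theta$ fails symmetrically, and $\theta_{1}=\theta$ fails too, since then the ratio is $\ell(r)$ or $1/\ell(r)$, which is in general unbounded (take $\ell(r)=\log r$). ``Symmetrizing with two powers bracketing $\theta$'' does not repair this: the bracketing $r^{\theta_{0}}\lesssim\psi\lesssim r^{\theta_{1}}$ is strictly weaker than pseudoconcavity, which requires the comparison function to absorb the slowly varying factor $\ell$ itself; and the estimate $c^{-1}r^{-\varepsilon}\leq\ell(r)\leq c\,r^{\varepsilon}$ cannot be sharpened to boundedness of $\ell$, so no choice of $\varepsilon$ rescues the argument.

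The standard repair is a smoothing construction rather than a power comparison. By the smooth variation theorem (see \cite[Theorem~1.8.2]{BinghamGoldieTeugels89}, obtained from the Karamata integral representation), any $\psi$ varying regularly of index $\theta$ is equivalent near infinity --- in fact $\psi(r)/\psi_{1}(r)\rightarrow1$ --- to a $C^{\infty}$ function $\psi_{1}$, regularly varying of the same index, with $r^{n}\psi_{1}^{(n)}(r)/\psi_{1}(r)\rightarrow\theta(\theta-1)\cdots(\theta-n+1)$ for every $n$. Taking $n=2$ gives $r^{2}\psi_{1}''(r)/\psi_{1}(r)\rightarrow\theta(\theta-1)<0$ precisely because $0<\theta<1$, so $\psi_{1}''(r)<0$ for $r\gg1$ and $\psi_{1}$ is concave in a neighbourhood of infinity (the case $n=1$ gives $\psi_{1}'>0$ there, consistent with positivity), while both ratios $\psi/\psi_{1}$ and $\psi_{1}/\psi$ are bounded since their limit is~$1$. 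Note that this is where the hypothesis $0<\theta<1$ genuinely enters --- through the sign of $\theta(\theta-1)$ --- not through fitting an auxiliary exponent $\theta_{1}$ into $(0,1)$. With this substitution your outline becomes a correct proof and coincides with the route taken in the monograph the paper cites.
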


The notion of a regularly varying function belongs to J.~Karamata~\cite{Karamata30a}. It is evident that a function $\psi:(r_{0},\infty)\rightarrow(0,\infty)$, with $r_{0}\in\mathbb{R}$, varies regularly of index $\theta\in\mathbb{R}$ at infinity if and only if $\psi(r)\equiv r^{\theta}\psi_{0}(r)$ for a certain function $\psi_{0}:\nobreak(r_{0},\infty)\rightarrow(0,\infty)$ that varies slowly at infinity, both functions being assumed to be Borel measurable.

Note that, in the case of power functions, Proposition~\ref{9prop6.1} leads us to the above-mentioned classical result by J.-L.~Lions and S.~G.~Krein. Namely, they proved that the function $\psi(r)\equiv\nobreak r^{\theta}$ is an interpolation parameter whenever $0<\theta<1$. In this case, the exponent $\theta$ is regarded as a number parameter of the interpolation.

We end this section with two properties of the interpolation, which
will be used in our proofs. The first of them enables us to reduce the interpolation of subspaces or factor spaces to the interpolation of initial spaces (see \cite[Sec. 1.1.6]{MikhailetsMurach14} or \cite[Sec. 1.17]{Triebel95}). Note that subspaces (of Hilbert spaces) are assumed to be closed and that we generally consider nonorthogonal projectors onto subspaces.

\begin{proposition}\label{9prop6.2}
Let $X=[X_{0},X_{1}]$ be an admissible pair of Hilbert spaces, and let $Y_{0}$ be
a subspace of $X_{0}$. Then $Y_{1}:=X_{1}\cap Y_{0}$ is a subspace of $X_{1}$.
Suppose that there exists a linear mapping $P:X_{0}\rightarrow X_{0}$ such that $P$
is a projector of the space $X_{j}$ onto its subspace $Y_{j}$ for every
$j\in\{0,\,1\}$. Then the pairs $[Y_{0},Y_{1}]$ and $[X_{0}/Y_{0},X_{1}/Y_{1}]$
are admissible, and
\begin{gather}\label{9f6.1}
[Y_{0},Y_{1}]_{\psi}=X_{\psi}\cap Y_{0},\\
[X_{0}/Y_{0},X_{1}/Y_{1}]_{\psi}=X_{\psi}/(X_{\psi}\cap Y_{0}) \label{9f6.2}
\end{gather}
with equivalence of norms for an arbitrary interpolation parameter~$\psi\in\mathcal{B}$. Here, $X_{\psi}\cap Y_{0}$ is a subspace of $X_{\psi}$.
\end{proposition}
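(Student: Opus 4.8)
The plan is to exploit the single hypothesis that does all the work: $P$ is simultaneously a bounded projector of $X_0$ onto $Y_0$ and of $X_1$ onto $Y_1$. This exhibits $Y=[Y_0,Y_1]$ as a retract of the pair $X$, while the complementary projector $Q:=I-P$ identifies the quotient pair with a complementary subspace pair; the interpolation property of $\psi$ then transports both facts to the $\psi$-level. First I would settle admissibility. That $Y_1=X_1\cap Y_0$ is closed in $X_1$ follows since $X_1\hookrightarrow X_0$ is continuous and $Y_0$ is closed in $X_0$; density of $Y_1$ in $Y_0$ I would obtain from $P$: for $y\in Y_0$ choose $x_n\in X_1$ with $x_n\to y$ in $X_0$ (density of $X_1$), and note $Px_n\in Y_1$ with $Px_n\to Py=y$ in $X_0$. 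For the quotient pair, the natural map $X_1/Y_1\to X_0/Y_0$ is injective (if $x\in X_1\cap Y_0=Y_1$ its class vanishes), continuous (the $X_0/Y_0$-quotient norm is dominated by the $X_1/Y_1$-quotient norm, using $Y_1\subseteq Y_0$ and the embedding constant of $X_1\hookrightarrow X_0$), and has dense range (the image of the dense set $X_1$ under the continuous surjection $X_0\to X_0/Y_0$).

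Next I would extend $P$ to the interpolation level. Applying the interpolation property of $\psi$ to $P$, regarded as a morphism of the pair $X$ into itself, gives a bounded projector $P\colon X_\psi\to X_\psi$; since $Pv\in Y_0$ always and $Pv=v$ precisely when $v\in Y_0$, the range of this projector is $X_\psi\cap Y_0$, which is therefore a closed, complemented subspace of $X_\psi$, as asserted in the final sentence. To prove \eqref{9f6.1} I would run the retract argument. The inclusion $\iota\colon Y_0\to X_0$ restricts to bounded operators $Y_j\to X_j$, and $P$, corestricted to its range, gives bounded operators $X_j\to Y_j$, with $P\circ\iota=\mathrm{id}$ on each $Y_j$. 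Applying the interpolation property once to $\iota$ (source pair $Y$, target pair $X$) and once to $P$ (source pair $X$, target pair $Y$) yields bounded maps $\iota\colon Y_\psi\to X_\psi$ and $P\colon X_\psi\to Y_\psi$. The first gives $Y_\psi\subseteq X_\psi\cap Y_0$ continuously; the second, together with $Pv=v$ for $v\in X_\psi\cap Y_0$, gives the reverse inclusion continuously. Hence $Y_\psi=X_\psi\cap Y_0$ with equivalent norms.

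For \eqref{9f6.2} I would pass to $Q:=I-P$, a bounded projector of $X_j$ onto $Z_j:=(I-P)X_j$ with kernel $Y_j$. Thus $Q$ induces an isomorphism of pairs $[X_0/Y_0,X_1/Y_1]\leftrightarrow[Z_0,Z_1]$, whose level-wise inverse is induced by $Z_j\hookrightarrow X_j\to X_j/Y_j$; applying the interpolation property of $\psi$ to this isomorphism and to its inverse yields $[X_0/Y_0,X_1/Y_1]_\psi=[Z_0,Z_1]_\psi$ up to equivalence of norms. Since $Q$ satisfies the very hypotheses of the proposition with $Z_0$ in place of $Y_0$, the already proved \eqref{9f6.1} gives $[Z_0,Z_1]_\psi=X_\psi\cap Z_0$. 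Finally, on $X_\psi$ the bounded projector $P$ produces the topological direct sum $X_\psi=(X_\psi\cap Y_0)\oplus(X_\psi\cap Z_0)$, so the quotient $X_\psi/(X_\psi\cap Y_0)$ is isomorphic, with equivalent norms, to the complement $X_\psi\cap Z_0$. Chaining these identifications gives \eqref{9f6.2}.

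The hard part will not be any single estimate but the bookkeeping of identifications: I must ensure the claimed equalities hold as equalities of subspaces (inside $X_\psi$, and inside $X_0/Y_0$) with equivalent norms, not merely as abstract isomorphisms. Concretely, I need to check that the map $\iota\colon Y_\psi\to X_\psi$ delivered by the interpolation property is literally the set-theoretic inclusion, and that the isomorphism induced by $Q$ is compatible with the natural embeddings of both sides of \eqref{9f6.2} into $X_0/Y_0$. Once these identifications are pinned down, the whole proposition reduces to one careful application of the interpolation property of $\psi$ in each direction.
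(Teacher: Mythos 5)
Your proposal is correct and is essentially the same argument as the one behind the paper's statement: the paper gives no proof of Proposition~\ref{9prop6.2}, citing \cite[Sec.~1.1.6]{MikhailetsMurach14} and \cite[Sec.~1.17]{Triebel95}, and the proof there is exactly your retract/coretract scheme --- interpolate the inclusion $Y_{0}\hookrightarrow X_{0}$ and the projector $P$ to obtain \eqref{9f6.1}, then use the complementary projector $I-P$ to identify the quotient pair with the complemented subspace pair and reduce \eqref{9f6.2} to \eqref{9f6.1}. Your attention to the bookkeeping (that the interpolated maps are the set-theoretic inclusions and that the identifications are compatible with the embeddings into $X_{0}/Y_{0}$) matches what the cited proofs require, so there is no gap.
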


The second property reduces the interpolation of ortogonal sums of Hilbert spaces to the interpolation of their summands.

\begin{proposition}\label{9prop6.3}
Let $[X_{0}^{(j)},X_{1}^{(j)}]$, with $j=1,\ldots,q$, be a finite collection of admissible pairs of Hilbert spaces. Then
$$
\biggl[\,\bigoplus_{j=1}^{q}X_{0}^{(j)},\,
\bigoplus_{j=1}^{q}X_{1}^{(j)}\biggr]_{\psi}=\,
\bigoplus_{j=1}^{q}\bigl[X_{0}^{(j)},\,X_{1}^{(j)}\bigr]_{\psi}
$$
with equality of norms for every function $\psi\in\mathcal{B}$.
\end{proposition}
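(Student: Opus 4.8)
The plan is to reduce everything to the spectral calculus of the generating operators, since by definition the interpolation space $X_\psi$ is the domain of $\psi(J)$ equipped with the norm $\|v\|_{X_\psi}=\|\psi(J)v\|_{X_0}$. First I would introduce, for each index $j$, the generating operator $J_j$ of the admissible pair $[X_0^{(j)},X_1^{(j)}]$; thus $J_j$ is positive-definite and self-adjoint on $X_0^{(j)}$, has domain $X_1^{(j)}$, and satisfies $\|J_jv\|_{X_0^{(j)}}=\|v\|_{X_1^{(j)}}$ for all $v\in X_1^{(j)}$. Setting $X_0:=\bigoplus_{j=1}^q X_0^{(j)}$ and $X_1:=\bigoplus_{j=1}^q X_1^{(j)}$, I would first check that $[X_0,X_1]$ is admissible: the continuity and density of the embedding $X_1\hookrightarrow X_0$ follow componentwise from the admissibility of each $[X_0^{(j)},X_1^{(j)}]$.

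The central step is to identify the generating operator of $[X_0,X_1]$ with the orthogonal direct sum $J:=\bigoplus_{j=1}^q J_j$, acting by $J(v_1,\ldots,v_q)=(J_1v_1,\ldots,J_qv_q)$ on the domain $\bigoplus_{j=1}^q X_1^{(j)}=X_1$. This operator is positive-definite and self-adjoint, and
\[
\|Jv\|_{X_0}^2=\sum_{j=1}^q\|J_jv_j\|_{X_0^{(j)}}^2=\sum_{j=1}^q\|v_j\|_{X_1^{(j)}}^2=\|v\|_{X_1}^2,
\]
so by the uniqueness of the generating operator $J$ is indeed the one associated to $[X_0,X_1]$. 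The key computation is then that the Borel functional calculus respects the direct sum: since the resolution of the identity of $J$ is the direct sum of those of the $J_j$, one has $\psi(J)=\bigoplus_{j=1}^q\psi(J_j)$ for every $\psi\in\mathcal{B}$, with domain $\bigoplus_{j=1}^q\mathrm{dom}\,\psi(J_j)$. Consequently
\[
X_\psi=\mathrm{dom}\,\psi(J)=\bigoplus_{j=1}^q\mathrm{dom}\,\psi(J_j)=\bigoplus_{j=1}^q\bigl[X_0^{(j)},X_1^{(j)}\bigr]_\psi,
\]
and for $v=(v_1,\ldots,v_q)$ in this space,
\[
\|v\|_{X_\psi}^2=\|\psi(J)v\|_{X_0}^2=\sum_{j=1}^q\|\psi(J_j)v_j\|_{X_0^{(j)}}^2=\sum_{j=1}^q\|v_j\|_{[X_0^{(j)},X_1^{(j)}]_\psi}^2,
\]
which is exactly the squared norm of the orthogonal sum. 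This yields both the equality of the underlying spaces and the equality (not merely equivalence) of norms asserted in the proposition.

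The only genuinely technical point is the identity $\psi(J)=\bigoplus_j\psi(J_j)$, which I expect to be the main obstacle to state cleanly. I would justify it from the spectral theorem: writing $E_j$ for the resolution of the identity of $J_j$, the family $E:=\bigoplus_j E_j$ is the resolution of the identity of $J$ (because $J$ commutes with the orthogonal projectors onto the summands and restricts to $J_j$ on each), so $\psi(J)=\int_{(0,\infty)}\psi\,dE=\bigoplus_j\int_{(0,\infty)}\psi\,dE_j=\bigoplus_j\psi(J_j)$. In particular the domains match, since $v\in\mathrm{dom}\,\psi(J)$ means $\int_{(0,\infty)}|\psi|^2\,d(Ev,v)_{X_0}<\infty$ with the scalar measure $(E(\cdot)v,v)_{X_0}=\sum_j(E_j(\cdot)v_j,v_j)_{X_0^{(j)}}$, and a finite sum of nonnegative integrals is finite precisely when each summand is, i.e. when each $v_j\in\mathrm{dom}\,\psi(J_j)$. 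Note that, unlike the interpolation results for subspaces and factor spaces, this argument never uses that $\psi$ is an interpolation parameter: the identity holds for every $\psi\in\mathcal{B}$, exactly as the statement requires.
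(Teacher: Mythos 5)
Your proof is correct and complete: identifying the generating operator of the direct-sum pair $\bigl[\bigoplus_{j}X_{0}^{(j)},\bigoplus_{j}X_{1}^{(j)}\bigr]$ as $\bigoplus_{j}J_{j}$ via the uniqueness of generating operators, and then observing that the Borel functional calculus respects finite orthogonal sums (so that $\psi(J)=\bigoplus_{j}\psi(J_{j})$ with matching domains), gives both the equality of spaces and the equality of norms for every $\psi\in\mathcal{B}$, with no interpolation-parameter hypothesis needed. The paper itself states Proposition~\ref{9prop6.3} without proof, quoting it as a known property from the monograph of Mikhailets and Murach, and your argument is essentially the standard proof given in that source, so there is nothing to add.
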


\section{The interpolation between anisotropic Sobolev spaces}\label{9sec7}

The purpose of this section is to prove that the H\"ormander spaces appearing in Theorem~\ref{9th4.1} can be obtained by means of the interpolation with a function parameter between their Sobolev analogs.

We assume that
\begin{equation}\label{9f7.1}
s,s_{0},s_{1},\gamma\in\mathbb{R},\quad s_{0}<s<s_{1},\quad\gamma>0,
\quad\mbox{and}\quad\varphi\in\mathcal{M}.
\end{equation}
Consider the function
\begin{equation}\label{9f7.2}
\psi(r):=
\begin{cases}
\;r^{(s-s_{0})/(s_{1}-s_{0})}\,\varphi(r^{1/(s_{1}-s_{0})})&\text{for}\quad r\geq1, \\
\;\varphi(1) & \text{for}\quad0<r<1.
\end{cases}
\end{equation}
By Proposition~\ref{9prop6.1}, this function is an interpolation parameter  because $\psi$ varies regularly of index $\theta:=(s-s_{0})/(s_{1}-s_{0})$ at infinity, with $0<\theta<1$. We will interpolate pairs of Sobolev spaces with the function parameter $\psi$.

Beforehand, we will derive the necessary interpolation formulas for the basic H\"ormander spaces $H^{s,s\gamma;\varphi}(\mathbb{R}^{k+1})$ and $H^{s,s\gamma;\varphi}_{+}(\mathbb{R}^{k+1})$, with the integer $k\geq1$. All the results of this section are formulated as lemmas.

\begin{lemma}\label{9lem7.1}
On the assumption \eqref{9f7.1} we have
\begin{equation}\label{9f7.3}
H^{s,s\gamma;\varphi}(\mathbb{R}^{k+1})=
\bigl[H^{s_{0},s_{0}\gamma}(\mathbb{R}^{k+1}),
H^{s_{1},s_{1}\gamma}(\mathbb{R}^{k+1})\bigr]_{\psi}
\end{equation}
with equality of norms.
\end{lemma}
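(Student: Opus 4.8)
The plan is to realize both sides of \eqref{9f7.3} explicitly on the Fourier side and then show they carry the same norm. The key observation is that the space $H^{s,s\gamma;\varphi}(\mathbb{R}^{k+1})$ is a weighted $L_2$-space under the Fourier transform, with weight $r_{\gamma}^{s}(\xi,\eta)\,\varphi(r_{\gamma}(\xi,\eta))$, and the two endpoint Sobolev spaces correspond to the weights $r_{\gamma}^{s_0}(\xi,\eta)$ and $r_{\gamma}^{s_1}(\xi,\eta)$. Thus everything reduces to identifying the generating operator of the admissible pair $X:=[H^{s_0,s_0\gamma}(\mathbb{R}^{k+1}),H^{s_1,s_1\gamma}(\mathbb{R}^{k+1})]$ and computing $\psi(J)$.

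First I would verify that the pair $X$ is admissible: the continuous dense embedding $H^{s_1,s_1\gamma}(\mathbb{R}^{k+1})\hookrightarrow H^{s_0,s_0\gamma}(\mathbb{R}^{k+1})$ follows from $s_0<s_1$ and $\gamma>0$, since $r_{\gamma}(\xi,\eta)\geq1$. Next I would exhibit the generating operator $J$. Because $X_0$ is $L_2$ with weight $r_{\gamma}^{s_0}$ and $X_1$ is $L_2$ with weight $r_{\gamma}^{s_1}$, passing to the unitarily equivalent picture via $\mathcal{F}$ and the weight $r_{\gamma}^{s_0}$, the operator $J$ acts as multiplication by the function
\begin{equation*}
\rho(\xi,\eta):=r_{\gamma}^{\,s_1-s_0}(\xi,\eta),
\quad\xi\in\mathbb{R}^{k},\ \eta\in\mathbb{R}.
\end{equation*}
Indeed this $\rho$ is positive, self-adjoint as a multiplier, has domain exactly $X_1$, and satisfies $\|Jv\|_{X_0}=\|v\|_{X_1}$; by uniqueness it is the generating operator. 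I would then apply the Borel functional calculus: for the interpolation parameter $\psi$ from \eqref{9f7.2}, the operator $\psi(J)$ is multiplication by $\psi(\rho(\xi,\eta))=\psi\bigl(r_{\gamma}^{\,s_1-s_0}(\xi,\eta)\bigr)$.

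The heart of the computation is then to substitute $r=r_{\gamma}^{\,s_1-s_0}(\xi,\eta)\geq1$ into the top branch of \eqref{9f7.2}, which gives
\begin{equation*}
\psi\bigl(r_{\gamma}^{\,s_1-s_0}\bigr)=
r_{\gamma}^{\,(s_1-s_0)\cdot(s-s_0)/(s_1-s_0)}\,
\varphi\bigl(r_{\gamma}^{\,(s_1-s_0)/(s_1-s_0)}\bigr)
=r_{\gamma}^{\,s-s_0}\,\varphi(r_{\gamma}).
\end{equation*}
Hence the norm on $X_\psi$ of a distribution $w$ equals the $L_2$-norm of $r_{\gamma}^{s_0}\cdot r_{\gamma}^{\,s-s_0}\,\varphi(r_{\gamma})\cdot\widetilde{w}=r_{\gamma}^{s}\,\varphi(r_{\gamma})\,\widetilde{w}$, which is precisely $\|w\|_{H^{s,s\gamma;\varphi}(\mathbb{R}^{k+1})}$. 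This yields \eqref{9f7.3} with equality of norms, the domains of $\psi(J)$ and the H\"ormander space coinciding because the defining integrability conditions are identical.

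I do not expect a serious obstacle here; the argument is essentially a verification. The one point requiring mild care is confirming that $r_{\gamma}^{\,s_1-s_0}(\xi,\eta)\geq1$ always, so that only the top branch of \eqref{9f7.2} is ever used and the lower branch $0<r<1$ is irrelevant on the spectrum of $J$; this holds since $s_1-s_0>0$ and $r_{\gamma}\geq1$. A second routine check is that $\psi\in\mathcal{B}$ and is an interpolation parameter, but this has already been recorded right after \eqref{9f7.2} via Proposition~\ref{9prop6.1}, so I may simply invoke it. The whole proof is thus the identification of $J$ as a Fourier multiplier followed by a one-line substitution into $\psi$.
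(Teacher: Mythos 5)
Your proposal is correct and follows essentially the same route as the paper's proof: both identify the generating operator $J$ of the pair as the Fourier multiplier $r_{\gamma}^{s_1-s_0}$, apply the functional calculus to get $\psi(J)$ as multiplication by $r_{\gamma}^{s-s_0}\varphi(r_{\gamma})$ (using $r_{\gamma}\geq1$ so only the top branch of \eqref{9f7.2} is active), and match the norms. The only cosmetic difference is at the end: the paper verifies the norm identity on the dense set $C^{\infty}_{0}(\mathbb{R}^{k+1})$ and concludes by density, whereas you identify the domain of $\psi(J)$ directly via the spectral description of multiplication operators; both closings are valid.
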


The proof of this lemma is analogous to the proof of \cite[Lemma~5.1]{LosMurach13MFAT2}, where the $k=1$ case was examined. Nevertheless, we give this proof for the reader's convenience.

\begin{proof}
According to \eqref{9f3.1}, the pair
\begin{equation*}
X:=\bigl[H^{s_{0},s_{0}\gamma}(\mathbb{R}^{k+1}),
H^{s_{1},s_{1}\gamma}(\mathbb{R}^{k+1})\bigr]
\end{equation*}
of anisotropic Sobolev spaces is admissible. It follows immediately from their definition that the generating operator for $X$ is given by the formula
$$
J:\,w\mapsto\mathcal{F}^{-1}[r_{\gamma}^{s_{1}-s_{0}}\,\mathcal{F}w\,],
\quad\mbox{with}\quad w\in H^{s_{1},s_{1}\gamma}(\mathbb{R}^{k+1}).
$$
Here, $\mathcal{F}$ [$\mathcal{F}^{-1}$ respectively] denotes the operator of the direct [inverse] Fourier transform in all variables of tempered distributions given in $\mathbb{R}^{k+1}$.

The generating operator $J$ is reduced to the operator of multiplication by $r_{\gamma}^{s_{1}-s_{0}}$ by means of the Fourier transform which sets an isometric isomorphism
$$
\mathcal{F}:\,H^{s_{0},s_{0}\gamma}(\mathbb{R}^{k+1})\leftrightarrow
L_{2}\bigl(\mathbb{R}^{k+1},r_{\gamma}^{2s_{0}}(\xi,\eta)d\xi d\eta\bigr).
$$
Here, of course, the second Hilbert space consists of all functions (of $\xi\in\mathbb{R}^{k}$ and $\eta\in\mathbb{R}$) that are square integrable over $\mathbb{R}^{k+1}$ with respect to the Radon measure  $r_{\gamma}^{2s_{0}}(\xi,\eta)d\xi d\eta$. Hence, $\mathcal{F}$ reduces $\psi(J)$ to the operator of multiplication by the function
$$
\psi(r_{\gamma}^{s_{1}-s_{0}}(\xi,\eta))\equiv
r_{\gamma}^{s-s_{0}}(\xi,\eta)\,\varphi(r_{\gamma}(\xi,\eta)),
$$
the identity being due to \eqref{9f7.2}.

Therefore, given $w\in C^{\infty}_{0}(\mathbb{R}^{k+1})$, we can write
\begin{align*}
\|w\|_{X_{\psi}}^{2}&=
\|\psi(J)w\|_{H^{s_{0},s_{0}\gamma}(\mathbb{R}^{k+1})}^{2}\\
&=\int\limits_{\mathbb{R}^{k}}\,\int\limits_{\mathbb{R}}
r_{\gamma}^{2s_{0}}(\xi,\eta)\,
|\psi(r_{\gamma}^{s_{1}-s_{0}}(\xi,\eta))\,(\mathcal{F}w)(\xi,\eta)|^{2}\,
d\xi d\eta\\&=\|w\|_{H^{s,s\gamma;\varphi}(\mathbb{R}^{k+1})}^{2}.
\end{align*}
This implies the equality of spaces \eqref{9f7.3} as the set $C^{\infty}_{0}(\mathbb{R}^{k+1})$
is dense in both of them. Here, we remark that this set is dense in the second space denoted by $X_{\psi}$ because $C^{\infty}_{0}(\mathbb{R}^{k+1})$ is
dense in the space $H^{s_{1},s_{1}\gamma}(\mathbb{R}^{k+1})$ embedded continuously and densely in $X_{\psi}$.
\end{proof}

\begin{lemma}\label{9lem7.2}
Assume in addition to \eqref{9f7.1} that $s_{0}\geq0$. Then
\begin{equation}\label{9f7.4}
H^{s,s\gamma;\varphi}_{+}(\mathbb{R}^{k+1})=
\bigl[H^{s_{0},s_{0}\gamma}_{+}(\mathbb{R}^{k+1}),
H^{s_{1},s_{1}\gamma}_{+}(\mathbb{R}^{k+1})\bigr]_{\psi}
\end{equation}
with equivalence of norms.
\end{lemma}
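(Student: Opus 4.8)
The plan is to deduce \eqref{9f7.4} from Lemma~\ref{9lem7.1} by means of Proposition~\ref{9prop6.2}, which reduces the interpolation of the subspaces $H^{s_j,s_j\gamma}_{+}(\mathbb{R}^{k+1})$ to that of the ambient Sobolev spaces. First I would put
$$
X:=\bigl[X_{0},X_{1}\bigr]:=
\bigl[H^{s_{0},s_{0}\gamma}(\mathbb{R}^{k+1}),
H^{s_{1},s_{1}\gamma}(\mathbb{R}^{k+1})\bigr],
$$
so that Lemma~\ref{9lem7.1} gives $X_{\psi}=H^{s,s\gamma;\varphi}(\mathbb{R}^{k+1})$ with equality of norms. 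Taking $Y_{0}:=H^{s_{0},s_{0}\gamma}_{+}(\mathbb{R}^{k+1})$, which is a closed subspace of $X_{0}$ (see Section~\ref{9sec3}), one checks directly from the definitions that $Y_{1}:=X_{1}\cap Y_{0}=H^{s_{1},s_{1}\gamma}_{+}(\mathbb{R}^{k+1})$: a distribution lying in $H^{s_{1},s_{1}\gamma}(\mathbb{R}^{k+1})$ and supported in $\mathbb{R}^{k}\times[0,\infty)$ is exactly an element of the larger Sobolev space subject to the same support restriction.

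To invoke Proposition~\ref{9prop6.2} it then remains to produce a single linear mapping $P:X_{0}\to X_{0}$ that is a projector of $X_{j}$ onto $Y_{j}$ for each $j\in\{0,1\}$, and this is the step I expect to be the main obstacle. The naive candidate, multiplication by the indicator of $\{t\geq0\}$, is bounded on $X_{0}$ but fails on $X_{1}$ because of the trace obstruction when $s_{1}\gamma>1/2$. Here the hypothesis $s_{0}\geq0$ becomes essential: it guarantees that every element of $X_{0}$ is a genuine $L_{2}$ function whose restriction to $\{t<0\}$ is well defined. Working in the variable $t$ alone, I would fix a universal extension operator $E$ from $(-\infty,0)$ to $\mathbb{R}$ that is bounded $H^{\sigma}((-\infty,0))\to H^{\sigma}(\mathbb{R})$ \emph{simultaneously} for $\sigma=s_{0}\gamma$ and $\sigma=s_{1}\gamma$ (such operators exist; cf. \cite[Theorems 4.2.2 and 4.2.3]{Triebel95} and \cite{Rychkov99}), let $\rho_{-}$ denote restriction to $\{t<0\}$, and set $P_{t}:=I-E\rho_{-}$ and $P:=I\otimes P_{t}$. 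A short computation shows that $P_{t}v$ vanishes for $t<0$, that $P_{t}$ fixes every function supported in $[0,\infty)$, and that $P_{t}^{2}=P_{t}$; hence $P$ is idempotent with range consisting of distributions supported in $\mathbb{R}^{k}\times[0,\infty)$.

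The verification that $P$ is bounded on $X_{0}$ and on $X_{1}$ with the required ranges $Y_{0}$ and $Y_{1}$ I would carry out using the description
$$
H^{\sigma,\sigma\gamma}(\mathbb{R}^{k+1})=
H^{\sigma}(\mathbb{R}^{k})\otimes L_{2}(\mathbb{R})\,\cap\,
L_{2}(\mathbb{R}^{k})\otimes H^{\sigma\gamma}(\mathbb{R})
$$
(the analog of \eqref{9f5.12}): since $P_{t}$ is bounded on $L_{2}(\mathbb{R})$ and on $H^{\sigma\gamma}(\mathbb{R})$ for $\sigma\in\{s_{0},s_{1}\}$, the operator $I\otimes P_{t}$ is bounded on each of the two intersected tensor products, hence on their intersection $X_{j}$; the range identification follows from the facts about $P_{t}$ noted above. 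With $P$ in hand, Proposition~\ref{9prop6.2} yields $[Y_{0},Y_{1}]_{\psi}=X_{\psi}\cap Y_{0}$ with equivalence of norms. Finally, since $X_{\psi}=H^{s,s\gamma;\varphi}(\mathbb{R}^{k+1})$ is continuously embedded in $X_{0}$, the intersection $X_{\psi}\cap Y_{0}$ consists precisely of those $w\in H^{s,s\gamma;\varphi}(\mathbb{R}^{k+1})$ with $\mathrm{supp}\,w\subseteq\mathbb{R}^{k}\times[0,\infty)$, that is, of the space $H^{s,s\gamma;\varphi}_{+}(\mathbb{R}^{k+1})$. This gives \eqref{9f7.4} with equivalence of norms.
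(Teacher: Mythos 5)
Your proposal is correct and takes essentially the same route as the paper: the authors likewise reduce \eqref{9f7.4} to Lemma~\ref{9lem7.1} via Proposition~\ref{9prop6.2}, constructing the common projector as $P:w\mapsto w-(I\otimes T)\bigl(w\!\upharpoonright\!(\mathbb{R}^{k}\times(-\infty,0))\bigr)$ with a Sobolev extension operator $T$ in the $t$-variable (which is precisely your $I\otimes(I-E\rho_{-})$), and they verify its boundedness on $X_{0}$ and $X_{1}$ through the same tensor-product description of the anisotropic spaces. Your support and idempotence checks, the use of $s_{0}\geq0$ to make restriction to $\{t<0\}$ meaningful, and the final identification $X_{\psi}\cap Y_{0}=H^{s,s\gamma;\varphi}_{+}(\mathbb{R}^{k+1})$ all match the paper's argument.
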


\begin{proof}
We will deduce this lemma from Lemma~\ref{9lem7.1} with the help of Proposition~\ref{9prop6.2}. To this end, we need to present a linear mapping $P$ on $L_{2}(\mathbb{R}^{k+1})$ that the restriction of $P$ to each space $H^{s_{j},s_{j}\gamma}(\mathbb{R}^{k+1})$, with $j\in\{0,1\}$, is a projector of $H^{s_{j},s_{j}\gamma}(\mathbb{R}^{k+1})$ onto $H^{s_{j},s_{j}\gamma}_{+}(\mathbb{R}^{k+1})$.

Let us consider a bounded linear operator
\begin{equation}\label{9f7.5}
T:L_{2}((-\infty,0))\rightarrow L_{2}(\mathbb{R})
\end{equation}
such that $Th=h$ on $(-\infty,0)$ for every function $h\in L_{2}((-\infty,0))$ and that the restriction of the mapping $T$ to the Sobolev space $H^{s_{j}\gamma}((-\infty,0))$ is a bounded operator
\begin{equation}\label{9f7.6}
T:H^{s_{j}\gamma}((-\infty,0))\rightarrow H^{s_{j}\gamma}(\mathbb{R})\quad
\mbox{for each}\quad j\in\{0,1\}.
\end{equation}
The operator $T$ exists \cite[Lemma 2.9.3]{Triebel95}. Using the tensor product of bounded operators on Hilbert spaces, we obtain the bounded linear operator
\begin{equation}\label{9f7.7}
I\otimes T:L_{2}(\mathbb{R}^{k}\times(-\infty,0))\to
L_{2}(\mathbb{R}^{k+1})
\end{equation}
such that $(I\otimes T)v=v$ on
$\mathbb{R}^{k}\times(-\infty,0)$ for every function $v\in L_{2}(\mathbb{R}^{k}\times(-\infty,0))$. Here, $I$ is the identity operator on $L_{2}(\mathbb{R}^{k})$.

Given $j\in\{0,1\}$, we can write
\begin{equation}\label{9f7.8}
\begin{aligned}
&H^{s_{j},s_{j}\gamma}(\mathbb{R}^{k}\times(-\infty,0))\\
=&H^{s_{j}}(\mathbb{R}^{k})\otimes L_{2}((-\infty,0))\cap
L_{2}(\mathbb{R}^{k})\otimes H^{s_{j}\gamma}((-\infty,0))
\end{aligned}
\end{equation}
and
\begin{equation}\label{9f7.9}
H^{s_{j},s_{j}\gamma}(\mathbb{R}^{k+1})=
H^{s_{j}}(\mathbb{R}^{k})\otimes L_{2}(\mathbb{R})\cap
L_{2}(\mathbb{R}^{k})\otimes H^{s_{j}\gamma}(\mathbb{R}).
\end{equation}
These equalities of spaces hold true up to equivalence of norms. Being based on \eqref{9f7.5}, \eqref{9f7.6}, \eqref{9f7.8}, and \eqref{9f7.9},  we conclude that the restriction of \eqref{9f7.7} to the space $H^{s_{j},s_{j}\gamma}(\mathbb{R}^{k}\times(-\infty,0))$ is a bounded operator
\begin{equation}\label{9f7.10}
I\otimes T:H^{s_{j},s_{j}\gamma}(\mathbb{R}^{k}\times(-\infty,0))\to
H^{s_{j},s_{j}\gamma}(\mathbb{R}^{k+1}).
\end{equation}

Consider the linear mapping
\begin{equation*}
P:w\mapsto w-(I\otimes T)\bigl(w\!\upharpoonright\!(\mathbb{R}^{k}\times(-\infty,0))\bigr),\quad
\mbox{with}\quad w\in L_{2}(\mathbb{R}^{k+1}).
\end{equation*}
It is easy to see that $\mathrm{supp}\,Pw\subseteq\mathbb{R}^{k}\times[0,\infty)$ and that the inclusion $\mathrm{supp}\,w\subseteq\mathbb{R}^{k}\times[0,\infty)$ implies the equality $Pw=w$ on $\mathbb{R}^{k+1}$. Using these properties of $P$ and the boundedness of the operator \eqref{9f7.10}, we conclude that the mapping $P$ is required.

Now, by virtue of Proposition~\ref{9prop6.2} (formula \eqref{9f6.1}) and Lemma~\ref{9lem7.1}, we can write
\begin{align*}
\bigl[H&^{s_{0},s_{0}\gamma}_{+}(\mathbb{R}^{k+1}),
H^{s_{1},s_{1}\gamma}_{+}(\mathbb{R}^{k+1})\bigr]_{\psi}\\
=&\bigl[H^{s_{0},s_{0}\gamma}(\mathbb{R}^{k+1}),
H^{s_{1},s_{1}\gamma}(\mathbb{R}^{k+1})\bigr]_{\psi}\cap
H^{s_{0},s_{0}\gamma}_{+}(\mathbb{R}^{k+1})\\
=&H^{s,s\gamma;\varphi}(\mathbb{R}^{k+1})\cap
H^{s_{0},s_{0}\gamma}_{+}(\mathbb{R}^{k+1})\\
=&H^{s,s\gamma;\varphi}_{+}(\mathbb{R}^{k+1}).
\end{align*}
These equalities of spaces hold true up to equivalence of norms. (Note that the first pair is admissible by Proposition~\ref{9prop6.2}.) Thus, we have proved \eqref{9f7.4}.
\end{proof}

\begin{lemma}\label{9lem7.3}
Assume in addition to \eqref{9f7.1} that $s_{0}\geq0$ and
\begin{equation}\label{9f7.11}
s_{j}\gamma-1/2\notin\mathbb{Z}\quad\mbox{for each}\quad j\in\{0,1\}.
\end{equation}
Then
\begin{equation}\label{9f7.12}
H^{s,s\gamma;\varphi}_{+}(\Omega)=
\bigl[H^{s_{0},s_{0}\gamma}_{+}(\Omega),
H^{s_{1},s_{1}\gamma}_{+}(\Omega)\bigr]_{\psi}
\end{equation}
and
\begin{equation}\label{9f7.13}
H^{s,s\gamma;\varphi}_{+}(S)=
\bigl[H^{s_{0},s_{0}\gamma}_{+}(S),
H^{s_{1},s_{1}\gamma}_{+}(S)\bigr]_{\psi}
\end{equation}
with equivalence of norms.
\end{lemma}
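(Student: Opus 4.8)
The plan is to derive both formulas \eqref{9f7.12} and \eqref{9f7.13} from Lemma~\ref{9lem7.2} by means of Propositions~\ref{9prop6.2} and~\ref{9prop6.3}, exactly as Lemma~\ref{9lem7.2} was itself obtained from Lemma~\ref{9lem7.1}. The whole difficulty is concentrated in constructing, for each of the two geometric situations, a single bounded projector (resp. straightening operator) that works simultaneously at the two Sobolev endpoints $s_0$ and $s_1$ and that respects the support condition $\mathrm{supp}\subseteq\{t\geq0\}$.

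First I would treat $\Omega$. Set $X_j:=H^{s_j,s_j\gamma}_+(\mathbb{R}^{n+1})$ and $Y_j:=H^{s_j,s_j\gamma}_+(\mathbb{R}^{n+1},\Omega)$ for $j\in\{0,1\}$, so that $X_j/Y_j=H^{s_j,s_j\gamma}_+(\Omega)$ by definition and $Y_1=X_1\cap Y_0$. To invoke formula \eqref{9f6.2} of Proposition~\ref{9prop6.2} I must exhibit a linear map $P$ on $X_0$ whose restriction to each $X_j$ is a bounded projector onto $Y_j$. I would construct $P:=I-ER$, where $R$ is the restriction operator $w\mapsto w\!\upharpoonright\!\Omega$ onto the factor space and $E$ is a common bounded right inverse, i.e. an extension operator $E:H^{s_j,s_j\gamma}_+(\Omega)\to X_j$ with $(Eu)\!\upharpoonright\!\Omega=u$ for $j=0,1$. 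Granting such an $E$, one checks at once that $RE=I$, that $P$ is idempotent, that $Pw=0$ in $\Omega$ (so $Pw\in Y_j$), and that $Pw=w$ whenever $w\in Y_j$; hence $P$ is the required projector at both endpoints. Then \eqref{9f6.2} together with Lemma~\ref{9lem7.2}, which gives $X_\psi=H^{s,s\gamma;\varphi}_+(\mathbb{R}^{n+1})$ and therefore $X_\psi\cap Y_0=H^{s,s\gamma;\varphi}_+(\mathbb{R}^{n+1},\Omega)$, identifies the right-hand side of \eqref{9f7.12} with $X_\psi/(X_\psi\cap Y_0)=H^{s,s\gamma;\varphi}_+(\Omega)$, which is exactly \eqref{9f7.12}.

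The extension operator $E$ is the heart of the matter, and this is the step I expect to be the main obstacle. I would build it from the one-dimensional and spatial extension operators $O$, $T_\tau$, $T_G$ of Section~\ref{9sec5}, taking $E:=T_G\otimes(OT_\tau)$ as in \eqref{9f5.13}, but with $T_\tau$ and $T_G$ chosen independent of the smoothness index (such universal operators exist, cf. \cite{Rychkov99}, and were already used at the end of Section~\ref{9sec5}). By Lemma~\ref{9lem5.1} the factor space $H^{s_j,s_j\gamma}_+(\Omega)$ coincides, with equivalent norms, with the subspace of $H^{s_j,s_j\gamma}(\Omega)$ cut out by the vanishing trace conditions \eqref{9f5.4}; on that subspace the computation \eqref{9f5.13} shows that $E$ lands in $H^{s_j,s_j\gamma}_+(\mathbb{R}^{n+1})$ and is bounded. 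It is precisely here that the hypotheses $s_0\geq0$ and $s_j\gamma-1/2\notin\mathbb{Z}$ are used: the former keeps all orders nonnegative, while the latter makes the zero-extension operator $O$ an isomorphism \eqref{9f5.6} and legitimizes Lemma~\ref{9lem5.1}. Since $T_\tau$, $T_G$, and $O$ do not depend on $j$, the single operator $E$ is bounded at both endpoints, as required.

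Finally I would pass to $S$. The argument above applies verbatim with $k=n$ replaced by $k=n-1$ and $\Omega$ replaced by $\Pi=\mathbb{R}^{n-1}\times(0,\tau)$ (now no spatial extension $T_G$ is needed, exactly as in the second paragraph of the proof of Lemma~\ref{9lem5.1}), yielding $H^{s,s\gamma;\varphi}_+(\Pi)=[H^{s_0,s_0\gamma}_+(\Pi),H^{s_1,s_1\gamma}_+(\Pi)]_\psi$. To descend to the manifold $S$ I would use the local charts and the partition of unity fixed in Section~\ref{9sec3}. Let $Lv:=((\chi_j^{*}v)\circ\theta_j^{*})_{j=1}^{\lambda}$ be the straightening map; by the very definition \eqref{9f3.8} it is an isometry of $H^{s,s\gamma;\varphi}_+(S)$ into $\bigoplus_{j=1}^{\lambda}H^{s,s\gamma;\varphi}_+(\Pi)$, and it is bounded at each Sobolev endpoint. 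Choosing $\eta_j\in C^\infty(\Gamma)$ with $\eta_j\equiv1$ on $\mathrm{supp}\,\chi_j$ and $\mathrm{supp}\,\eta_j\subset\Gamma_j$, the reconstruction map $K$, which pushes each component back by $(\theta_j^{*})^{-1}$, multiplies by $\eta_j^{*}$, and sums over $j$, is a bounded left inverse of $L$ at both endpoints because $\sum_j\chi_j=1$ and $\eta_j\chi_j=\chi_j$. Applying Proposition~\ref{9prop6.3} to turn the interpolant of the direct sums into $\bigoplus_j H^{s,s\gamma;\varphi}_+(\Pi)$, and then using that bounded operators interpolate together with $KL=I$, one obtains the two-sided estimate between $\|v\|_{[H^{s_0,s_0\gamma}_+(S),H^{s_1,s_1\gamma}_+(S)]_\psi}$ and $\|v\|_{H^{s,s\gamma;\varphi}_+(S)}$, which proves \eqref{9f7.13}.
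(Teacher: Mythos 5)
Your treatment of $\Omega$ is correct and is, in substance, the paper's own proof: your projector $P=I-ER$ with $E=T_{G}\otimes(OT_{\tau})$ is exactly the paper's mapping $w\mapsto w-T_{+}(w\!\upharpoonright\!\Omega)$ built from the operator \eqref{9f7.16}, the identification of $H^{s_j,s_j\gamma}_{+}(\Omega)$ with the trace subspace $\Upsilon^{s_j,s_j\gamma}(\Omega)$ via Lemma~\ref{9lem5.1} is where \eqref{9f7.11} enters just as in the paper, and the conclusion via formula \eqref{9f6.2} of Proposition~\ref{9prop6.2} together with Lemma~\ref{9lem7.2} matches the paper line by line. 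The reduction of \eqref{9f7.13} to the $\Pi$-analog \eqref{9f7.17} and the flattening/sewing pair $L$, $K$ are likewise the paper's construction.

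There is, however, a genuine gap in the manifold step: you assert that $K$ \emph{is a bounded left inverse of $L$ at both endpoints because $\sum_j\chi_j=1$ and $\eta_j\chi_j=\chi_j$}. Those identities prove only the algebraic relation $KL=I$ on $L_{2}(S)$; they give no bound whatsoever on $K$ in the chart-defined norms. The point is that the $l$-th localized component of $Kh$ mixes the charts: $(\chi_l^{*}Kh)\circ\theta_l^{*}=\sum_{k}Q_{k,l}h_{k}$, where $Q_{k,l}$ multiplies by $\eta_{k,l}:=(\chi_l\circ\theta_k)\eta_k$ and composes with the transition diffeomorphism $\beta_{k,l}=\theta_k^{-1}\circ\theta_l$ in the $x$-variables. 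Bounding each $Q_{k,l}$ on $H^{s_j,s_j\gamma}_{+}(\Pi)$ is the real content of the endpoint boundedness of $K$ — it is essentially the chart-independence of the spaces $H^{s_j,s_j\gamma}_{+}(S)$, i.e.\ part of what is being established — and the paper proves it via the tensor-product representation
\begin{equation*}
H^{s_{j},s_{j}\gamma}(\mathbb{R}^{n})=H^{s_{j}}(\mathbb{R}^{n-1})\otimes L_{2}(\mathbb{R})\cap L_{2}(\mathbb{R}^{n-1})\otimes H^{s_{j}\gamma}(\mathbb{R})
\end{equation*}
combined with H\"ormander's invariance theorem \cite[Theorem~B.1.8]{Hormander85} for the operator $\omega\mapsto(\eta_{k,l}\,\omega)\circ\beta_{k,l}$, checking the support condition separately; and for the H\"ormander-level bound \eqref{9f7.23} it invokes the interpolation formula \eqref{9f7.17} once more. (In your scheme the H\"ormander-level bound on $K$ can indeed be bypassed, since $\|v\|_{H^{s,s\gamma;\varphi}_{+}(S)}=\|Lv\|$ by the very definition \eqref{9f3.8}, but the Sobolev endpoint bound \eqref{9f7.26} cannot.) A second, smaller omission: before interpolating the pair on the right of \eqref{9f7.13} you must know it is admissible, i.e.\ that $H^{s_j,s_j\gamma}_{+}(S)$ is complete with a continuous dense embedding; the paper secures this through the Sobolev case of Lemma~\ref{9lem3.1}, proved independently of Lemma~\ref{9lem7.3} via Lemma~\ref{9lem5.1} precisely to avoid circularity, and your proposal is silent on it.
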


\begin{proof}
We will first prove \eqref{9f7.12}. Recall that, by definition,
\begin{equation}\label{9f7.14}
H^{s,s\gamma;\varphi}_{+}(\Omega)=
H^{s,s\gamma;\varphi}_{+}(\mathbb{R}^{n+1})/
H^{s,s\gamma;\varphi}_{+}(\mathbb{R}^{n+1},\Omega)
\end{equation}
and
\begin{equation}\label{9f7.15}
H^{s_{j},s_{j}\gamma}_{+}(\Omega)=
H^{s_{j},s_{j}\gamma}_{+}(\mathbb{R}^{n+1})/
H^{s_{j},s_{j}\gamma}_{+}(\mathbb{R}^{n+1},\Omega)
\end{equation}
for each $j\in\{0,1\}$. Here, the denominators are defined by \eqref{9f3.5}. We will deduce formula \eqref{9f7.12} from Lemma~\ref{9lem7.2} with the help of Proposition~\ref{9prop6.2}, the interpolation of factor spaces being used. For this purpose, we need to present a linear mapping $P$ on $H^{s_{0},s_{0}\gamma}_{+}(\mathbb{R}^{n+1})$ that the restriction of $P$ to each $H^{s_{j},s_{j}\gamma}_{+}(\mathbb{R}^{n+1})$, with $j\in\{0,1\}$, is a projector of the space $H^{s_{j},s_{j}\gamma}_{+}(\mathbb{R}^{n+1})$ onto its subspace $H^{s_{j},s_{j}\gamma}_{+}(\mathbb{R}^{n+1},\Omega)$.

Let us make use of the reasoning and notation given in the proof of Lemma~\ref{9lem5.1}. The justification of \eqref{9f5.13} presented in this proof shows also that we have the bounded operator
\begin{equation}\label{9f7.16}
T_{+}:=T_{G}\otimes(OT_{\tau}):\Upsilon^{s_{j},s_{j}\gamma}(\Omega)\to
H^{s_{j},s_{j}\gamma}_{+}(\mathbb{R}^{n+1})
\end{equation}
for each $j\in\{0,1\}$. Here, the operators $T_{G}$ and $T_{\tau}$ respectively are restrictions of the mappings \eqref{9f5.7} and \eqref{9f5.9}, which do not depend on $j$ (see \cite[Theorems 4.2.2 and 4.2.3]{Triebel95}). Therefore the operator \eqref{9f7.16} with $j=1$ is a restriction of its counterpart with $j=0$. Besides, as we have mentioned just after \eqref{9f5.13}, the equality $T_{+}u=u$ holds on $\Omega$ for every $u\in\Upsilon^{s_{j},s_{j}\gamma}(\Omega)$. Note also that $\Upsilon^{s_{j},s_{j}\gamma}(\Omega)=
H^{s_{j},s_{j}\gamma}_{+}(\Omega)$ due to Lemma~\ref{9lem5.1} and the condition \eqref{9f7.11}.

Let us consider the linear mapping
\begin{equation*}
P:w\mapsto w-T_{+}(w\!\upharpoonright\!\Omega),\quad\mbox{with}\quad
w\in H^{s_{0},s_{0}\gamma}_{+}(\mathbb{R}^{n+1}).
\end{equation*}
Note that $Pw=0$ on $\Omega$ and that the condition $w=0$ on $\Omega$ implies the equality $Pw=w$ on $\mathbb{R}^{n+1}$. Taking into account these properties of $P$ and the boundedness of the operator~\eqref{9f7.16}, we conclude that the mapping $P$ is required.

Now, using successively \eqref{9f7.15}, Proposition~\ref{9prop6.2} (formula \eqref{9f6.2}), Lemma~\ref{9lem7.2}, and \eqref{9f7.14}, we write the following:
\begin{align*}
\bigl[H&^{s_{0},s_{0}\gamma}_{+}(\Omega),
H^{s_{1},s_{1}\gamma}_{+}(\Omega)\bigr]_{\psi}\\
=&\bigl[H^{s_{0},s_{0}\gamma}_{+}(\mathbb{R}^{n+1})/
H^{s_{0},s_{0}\gamma}_{+}(\mathbb{R}^{n+1},\Omega),
H^{s_{1},s_{1}\gamma}_{+}(\mathbb{R}^{n+1})/
H^{s_{1},s_{1}\gamma}_{+}(\mathbb{R}^{n+1},\Omega)\bigr]_{\psi}\\
=&X_{\psi}/(X_{\psi}\cap H^{s_{0},s_{0}\gamma}_{+}(\mathbb{R}^{n+1},\Omega))\\
=&H^{s,s\gamma;\varphi}_{+}(\mathbb{R}^{n+1})/
(H^{s,s\gamma;\varphi}_{+}(\mathbb{R}^{n+1})\cap
H^{s_{0},s_{0}\gamma}_{+}(\mathbb{R}^{n+1},\Omega))\\
=&H^{s,s\gamma;\varphi}_{+}(\mathbb{R}^{n+1})/
H^{s,s\gamma;\varphi}_{+}(\mathbb{R}^{n+1},\Omega)\\
=&H^{s,s\gamma,\varphi}_{+}(\Omega).
\end{align*}
Here,
\begin{equation*}
X_{\psi}:=\bigl[H^{s_{0},s_{0}\gamma}_{+}(\mathbb{R}^{n+1}),
H^{s_{1},s_{1}\gamma}_{+}(\mathbb{R}^{n+1})\bigr]_{\psi}=
H^{s,s\gamma;\varphi}_{+}(\mathbb{R}^{n+1}).
\end{equation*}
These equalities of spaces hold true up to equivalence of norms. (Remark also that the first pair is admissible by Proposition~\ref{9prop6.2}.) Thus, we have proved \eqref{9f7.12}.

Let us prove the interpolation formula \eqref{9f7.13}. The pair of spaces on the right of \eqref{9f7.13} is admissible due to Lemma~\ref{9lem3.1}. (Its proof given at the end of this section does not use this lemma in the case where $\varphi(r)\equiv1$ and $s\gamma-1/2\notin\mathbb{Z}$.) We will deduce formula \eqref{9f7.13} from its analog
\begin{equation}\label{9f7.17}
H^{s,s\gamma;\varphi}_{+}(\Pi)=
\bigl[H^{s_{0},s_{0}\gamma}_{+}(\Pi),
H^{s_{1},s_{1}\gamma}_{+}(\Pi)\bigr]_{\psi}
\end{equation}
for $\Pi:=\mathbb{R}^{n-1}\times(0,\tau)$. The proof of \eqref{9f7.17} is the same as that of \eqref{9f7.12}, but with $\Pi$, $\mathbb{R}^{n}$, and the identity operator instead of $\Omega$, $\mathbb{R}^{n+1}$, and $T_{G}$ respectively.

Using the definition of anisotropic H\"ormander spaces over $S$ given in \eqref{9f3.7} and \eqref{9f3.8}, we will deduce \eqref{9f7.13} from \eqref{9f7.17} with the help of certain operators of flattening and sewing of the manifold $S$. We define the flattening operator by the formula
\begin{equation*}
L:v\mapsto((\chi^*_{1}v)\circ\theta_{1}^*,\ldots,
(\chi^*_{\lambda}v)\circ\theta_{\lambda}^*),
\quad\mbox{with}\quad v\in L_2(S).
\end{equation*}
Its restrictions to the spaces $H^{s,s\gamma;\varphi}_{+}(S)$ and $H^{s_j,s_j\gamma}_{+}(S)$ are isometric linear operators
\begin{equation}\label{9f7.18}
L:H^{s,s\gamma;\varphi}_{+}(S)\rightarrow
\bigl(H^{s,s\gamma;\varphi}_{+}(\Pi)\bigr)^{\lambda}
\end{equation}
and
\begin{equation}\label{9f7.19}
L:H^{s_j,s_j\gamma}_{+}(S)\rightarrow
\bigl(H^{s_j,s_j\gamma}_{+}(\Pi)\bigr)^{\lambda}
\quad\mbox{for each}\quad j\in\{0,1\}.
\end{equation}
This follows immediately from the definition of these spaces. Interpolating with the function parameter $\psi$ between the spaces in \eqref{9f7.19}, we obtain a bounded operator
\begin{equation}\label{9f7.20}
L:\bigl[H^{s_0,s_0\gamma}_{+}(S),H^{s_1,s_1\gamma}_{+}(S)\bigr]_{\psi}\to
\bigl[\bigl(H^{s_0,s_0\gamma}_{+}(\Pi)\bigr)^{\lambda},
\bigl(H^{s_1,s_1\gamma}_{+}(\Pi)\bigr)^{\lambda}\bigr]_{\psi}.
\end{equation}
The latter interpolation space is equal to $(H^{s,s\gamma;\varphi}_{+}(\Pi))^{\lambda}$ up to equivalence of norms by virtue of Proposition~\ref{9prop6.3} and formula~\eqref{9f7.17}. Hence, \eqref{9f7.20} is a bounded operator
\begin{equation}\label{9f7.21}
L:\bigl[H^{s_0,s_0\gamma}_{+}(S),H^{s_1,s_1\gamma}_{+}(S)\bigr]_{\psi}\to
\bigl(H^{s,s\gamma;\varphi}_{+}(\Pi)\bigr)^{\lambda}.
\end{equation}

We define the sewing operator by the formula
\begin{equation*}
K:(h_{1},\ldots,h_{\lambda})\mapsto\sum_{k=1}^{\lambda}\,
O_{k}((\eta_{k}^{*}h_{k})\circ\theta_{k}^{*-1}),
\;\;\mbox{with}\;\;h_{1},\ldots,h_{\lambda}\in L_2(\Pi).
\end{equation*}
Here, each function $\eta_{k}\in C_{0}^{\infty}(\mathbb{R}^{n-1})$ is chosen so that $\eta_{k}=1$ on the set $\theta^{-1}_{k}(\mathrm{supp}\,\chi_{k})$; next $\eta^{*}_{k}(x,t):=\eta_{k}(x)$ for all $x\in\mathbb{R}^{n-1}$ and $t\in (0,\tau)$. Besides, $O_{k}$ denotes the operator of the extension (of functions) by zero from $\Gamma_k\times(0,\tau)$ to $S$; thus, for every $y\in\Gamma$ and $t\in (0,\tau)$, we have
\begin{equation*}
\bigl(O_{k}((\eta_{k}^{*}h_{k})\circ\theta_{k}^{*-1})\bigr)(y,t)=
\left\{
  \begin{array}{ll}
    \eta_{k}(x)h_{k}(x,t)&\mbox{if}\;\, y=\theta_{k}(x)\in\Gamma_{k}\;\,\mbox{with}\;\,x\in\mathbb{R}^{n-1};\\
    0&\hbox{otherwise.}
  \end{array}
\right.
\end{equation*}

The mapping $K$ is left inverse to $L$. Indeed,
\begin{align*}
KLv&=\sum_{k=1}^{\lambda}\,O_{k}\bigl(\bigl(\eta^*_{k}\,
((\chi_{k}^*v)\circ\theta_{k}^*)\bigr)\circ\theta_{k}^{*-1}\bigr)\\
&=\sum_{k=1}^{\lambda}\,O_{k}
\bigl((\chi_{k}^*v)\circ\theta_{k}^*\circ\theta_{k}^{*-1}\bigr)=
\sum_{k=1}^{\lambda}\,\chi_{k}^*v=v,
\end{align*}
that is
\begin{equation}\label{9f7.22}
KLv=v\quad\mbox{for every}\quad v\in L_2(S).
\end{equation}

Let us show that the restriction of the linear mapping $K$ to the space $(H^{s,s\gamma;\varphi}_{+}(\Pi))^{\lambda}$ is a bounded operator
\begin{equation}\label{9f7.23}
K:\bigl(H^{s,s\gamma;\varphi}_{+}(\Pi)\bigr)^{\lambda}
\rightarrow H^{s,s\gamma;\varphi}_{+}(S).
\end{equation}
Given a vector-valued function
\begin{equation*}
h:=(h_{1},\ldots,h_{\lambda})\in
\bigl(H^{s,s\gamma;\varphi}_{+}(\Pi)\bigr)^{\lambda},
\end{equation*}
we can write
\begin{align}\label{9f7.24}\notag
\bigl\|Kh\bigr\|^{2}_{H^{s,s\gamma;\varphi}_{+}(S)}
&=\sum_{l=1}^{\lambda}\;\bigl\|(\chi_{l}^*\,Kh)
\circ\theta_{l}^*\bigr\|_{H^{s,s\gamma;\varphi}_{+}(\Pi)}^{2}\\ \notag
&=\sum_{l=1}^{\lambda}\,\Bigl\|\Bigl(\chi_{l}^*\,
\sum_{k=1}^{\lambda}\,
O_{k}\left((\eta_{k}^{*}h_{k})\circ\theta_{k}^{*-1}\right)
\Bigr)
\circ\theta_{l}^*\,\Bigr\|_{H^{s,s\gamma;\varphi}_{+}(\Pi)}^{2}\\
&=\sum_{l=1}^{\lambda}\;\Bigl\|\,
\sum_{k=1}^{\lambda}Q_{k,l}\,h_{k}\,
\Bigr\|_{H^{s,s\gamma;\varphi}_{+}(\Pi)}^{2}.
\end{align}
Here, we define
\begin{equation}\label{9f7.25}
(Q_{k,l}\,w)(x,t):=\eta_{k,l}(\beta_{k,l}(x))\,w(\beta_{k,l}(x),t)
\end{equation}
for all $w\in L_{2}(\Pi)$, $x\in\mathbb{R}^{n-1}$, and $t\in(0,\tau)$, where
\begin{equation*}
\eta_{k,l}:=(\chi_{l}\circ\theta_{k})\eta_{k}\in C_{0}^{\infty}(\mathbb{R}^{n-1})
\end{equation*}
and, moreover, $\beta_{k,l}:\mathbb{R}^{n-1}\leftrightarrow\mathbb{R}^{n-1}$ is an infinitely smooth diffeomorphism such that $\beta_{k,l}=\nobreak\theta_{k}^{-1}\circ\theta_{l}$ in a neighbourhood of $\mathrm{supp}\,\eta_{k,l}$. As is known \cite[Theorem~B.1.8]{Hormander85}, the operator $\omega\mapsto(\eta_{k,l}\,\omega)\circ\beta_{k,l}$ is bounded on every Sobolev space $H^{\sigma}(\mathbb{R}^{n-1})$ with $\sigma\in\mathbb{R}$. Therefore the operator $w\mapsto Q_{k,l}\,w$ defined by formula \eqref{9f7.25} for all $w\in L_{2}(\mathbb{R}^{n})$, $x\in\mathbb{R}^{n-1}$, and $t\in\mathbb{R}$ is bounded on each space
\begin{equation*}
H^{s_{j},s_{j}\gamma}(\mathbb{R}^{n})=H^{s_{j}}(\mathbb{R}^{n-1})\otimes L_{2}(\mathbb{R})\cap
L_{2}(\mathbb{R}^{n-1})\otimes H^{s_{j}\gamma}(\mathbb{R})
\end{equation*}
with $j\in\{0,1\}$. Hence, the restriction of the mapping $w\mapsto Q_{k,l}w$, with $w\in L_{2}(\Pi)$, to each space $H^{s_{j},s_{j}\gamma}_{+}(\Pi)$ is a bounded operator on this space.
Then, owing to the interpolation formula \eqref{9f7.17}, the restriction of this mapping to the space $H^{s,s\gamma;\varphi}_{+}(\Pi)$ is a bounded operator on this space. Combining the latter conclusion with \eqref{9f7.24}, we can write
\begin{equation*}
\bigl\|Kh\bigr\|^{2}_{H^{s,s\gamma;\varphi}_{+}(S)}=
\sum_{l=1}^{\lambda}\;\Bigl\|\,
\sum_{k=1}^{\lambda}Q_{k,l}\,h_{k}\,
\Bigr\|_{H^{s,s\gamma;\varphi}_{+}(\Pi)}^{2}\leq
c\sum_{k=1}^{\lambda}\|h_{k}\|_{H^{s,s\gamma;\varphi}_{+}(\Pi)}^{2}
\end{equation*}
for some number $c>0$ that does not depend on $h$. Thus, we have proved the boundedness of the operator \eqref{9f7.23}.

The same reasoning also proves that the restriction of the mapping $K$ to the space $(H^{s_{j},s_{j}\gamma}_{+}(\Pi))^{\lambda}$ is a bounded operator
\begin{equation}\label{9f7.26}
K:\bigl(H^{s_{j},s_{j}\gamma}_{+}(\Pi)\bigr)^{\lambda}
\rightarrow H^{s_{j},s_{j}\gamma}_{+}(S)
\quad\mbox{for each}\quad j\in\{0,1\}.
\end{equation}
Interpolating between the spaces in \eqref{9f7.26} with the function parameter $\psi$ and using Proposition~\ref{9prop6.3} and formula~\eqref{9f7.17}, we obtain a bounded operator
\begin{equation}\label{9f7.27}
K:\bigl(H^{s,s\gamma;\varphi}_{+}(\Pi)\bigr)^{\lambda}\to
\bigl[H^{s_0,s_0\gamma}_{+}(S),H^{s_1,s_1\gamma}_{+}(S)\bigr]_{\psi}.
\end{equation}

Now it follows directly from \eqref{9f7.18}, \eqref{9f7.27}, and \eqref{9f7.22} that the identity operator $KL$ realizes a continuous embedding of the space $H^{s,s\gamma,\varphi}_{+}(S)$ in the interpolation space
\begin{equation*}
\bigl[H^{s_0,s_0\gamma}_{+}(S),H^{s_1,s_1\gamma}_{+}(S)\bigr]_{\psi}.
\end{equation*}
Moreover, it follows immediately from \eqref{9f7.21} and \eqref{9f7.23} that the identity operator $KL$ establishes the inverse continuous embedding. Thus, we have proved that the equality \eqref{9f7.13} is true up to equivalence of norms.
\end{proof}

\begin{remark}\label{9rem7.4}\rm
Lemma \ref{9lem7.3} remains valid without assumption \eqref{9f7.11}. Indeed, if  $s_{j}\gamma-1/2\notin\mathbb{Z}$ for some $j\in\{0,1\}$, then the interpolation formulas \eqref{9f7.12} and \eqref{9f7.13} can be deduced from this lemma with the help of the reiteration property of the interpolation \cite[Theorem~1.3]{MikhailetsMurach14}.
\end{remark}

At the end of this section, we will prove Lemma \ref{9lem3.1}.

\begin{proof}[Proof of Lemma $\ref{9lem3.1}$]
We first examine the case where $\varphi(r)\equiv1$ and $s\gamma-1/2\notin\mathbb{Z}$. Lemma~\ref{9lem5.1} implies that $H^{s,s\gamma}_{+}(S)$ is equal up to equivalence of norms to a certain subspace of $H^{s,s\gamma}(S)$. Therefore, assertion~(i) is a known property of the anisotropic Sobolev space $H^{s,s\gamma}(S)$; see \cite[Chapter~I, \S~5]{Slobodeckii58}.

Let us prove assertion~(ii) with the help of the flattening operator $L$ and sewing operator $K$ used in the proof of Lemma~\ref{9lem7.3}. As has been stated in Section~\ref{9sec3}, the set
\begin{equation*}
\Upsilon^{\infty}_{0}(\overline{\Pi}):=\bigl\{w\!\upharpoonright\!\overline{\Pi}:
w\in C^{\infty}_{0}(\mathbb{R}^{n-1}\times(0,\infty))\bigr\}
\end{equation*}
is dense in $H^{s,s\gamma}_{+}(\Pi)$. Given a function $v\in H^{s,s\gamma}_{+}(S)$, we approximate the vector $Lv$ by the sequence of vectors $h^{(j)}\in(\Upsilon^{\infty}_{0}(\overline{\Pi}))^{\lambda}$ in the norm of the space $(H^{s,s\gamma}_{+}(\Pi)\bigr)^{\lambda}$. Then the sequence of functions $Kh^{(j)}\in C^{\infty}_{+}(\overline{S})$ approximates the function $KLv=v$ in the norm of the space $H^{s,s\gamma}_{+}(S)$. Assertion (ii) is proved in the case examined.

In the general situation, Lemma~$\ref{9lem3.1}$ follows from this case with the help of Lemma~\ref{9lem7.3}. Indeed, the space $H^{s,s\gamma;\varphi}_{+}(S)$ is complete and separable due to properties of the interpolation used in formula~\eqref{9f7.13}. Moreover, let $\mathcal{A}_{1}$ and $\mathcal{A}_{2}$ be two pairs each of which consists of an atlas on $\Gamma$ and a partition of unity used in the definitions \eqref{9f3.7} and \eqref{9f3.8}. Let $H^{s,s\gamma;\varphi}_{+}(S,\mathcal{A}_{k})$ and $H^{s_{j},s_{j}\gamma}_{+}(S,\mathcal{A}_{k})$, with $j\in\{0,1\}$, denote the spaces $H^{s,s\gamma;\varphi}_{+}(S)$ and
$H^{s_{j},s_{j}\gamma}_{+}(S)$ corresponding to the pair $\mathcal{A}_{k}$ with $k\in\{0,1\}$. As has been stated in the previous paragraph, the identity mapping $I$ is an isomorphism between the spaces $H^{s_{j},s_{j}\gamma}_{+}(S,\mathcal{A}_{0})$ and $H^{s_{j},s_{j}\gamma}_{+}(S,\mathcal{A}_{1})$ for each $j\in\{0,1\}$. Therefore, by virtue of the interpolation formula \eqref{9f7.13}, we have the isomorphism
\begin{equation*}
I:H^{s,s\gamma;\varphi}_{+}(S,\mathcal{A}_{0})\leftrightarrow
H^{s,s\gamma;\varphi}_{+}(S,\mathcal{A}_{1}).
\end{equation*}
This means that the space $H^{s,s\gamma;\varphi}_{+}(S)$ does not depend on the choice of an atlas on $\Gamma$ and a partition of unity. Finally, Assertion~(ii) in the general situation results from the density of $H^{s_{1},s_{1}\gamma}_{+}(S)$ in $H^{s,s\gamma;\varphi}_{+}(S)$. (However, this assertion can be proved in the same way as that in the previous paragraph.)
\end{proof}

\section{Proofs of the main results}\label{9sec8}

In this section, we will prove Theorems \ref{9th4.1}, \ref{9th4.3}, and
\ref{9th4.4}. We will also justify Remark~\ref{9rem4.5}.

\begin{proof}[Proof of Theorem $\ref{9th4.1}$.] Let $\sigma>\sigma_0$ and $\varphi\in\mathcal{M}$. We choose an integer $\sigma_1>\sigma$ so that $\sigma_1/(2b)\in\mathbb{Z}$. According to M.~S.~Agranovich and M.~I.~Vishik's result \cite[Theorem~12.1]{AgranovichVishik64}, the mapping \eqref{9f2.4} extends uniquely (by continuity) to isomorphisms
\begin{equation}\label{9f8.1}
(A,B):H^{\sigma_k,\sigma_k/(2b)}_{+}(\Omega)\leftrightarrow
\mathcal{H}^{\sigma_k-2m,(\sigma_k-2m)/(2b)}_{+}(\Omega,S)
\quad\mbox{with}\quad k\in\{0,1\}.
\end{equation}
(Here, recall, the second space is defined by formula \eqref{9f4.2} with $\sigma:=\sigma_k$ and $\varphi\equiv1$.)

Let us define the interpolation parameter $\psi$ by formula \eqref{9f7.2} in which $s:=\sigma$, $s_{0}:=\sigma_{0}$, and $s_{1}:=\sigma_{1}$. Interpolating with the function parameter $\psi$ between the spaces in \eqref{9f8.1}, we obtain an isomorphism
\begin{equation}\label{9f8.2}
\begin{aligned}
(A,B)&:\bigl[H^{\sigma_0,\sigma_0/(2b)}_{+}(\Omega),
H^{\sigma_1,\sigma_1/(2b)}_{+}(\Omega)\bigr]_{\psi}\\
&\leftrightarrow
\bigl[\mathcal{H}^{\sigma_0-2m,(\sigma_0-2m)/(2b)}_{+}(\Omega,S),
\mathcal{H}^{\sigma_1-2m,(\sigma_1-2m)/(2b)}_{+}(\Omega,S)\bigr]_{\psi}.
\end{aligned}
\end{equation}
It is a restriction of the operator \eqref{9f8.1} with $k=0$.

According to Lemma~\ref{9lem7.3} and Proposition~\ref{9prop6.3} we can write
\begin{equation*}
\bigl[H^{\sigma_0,\sigma_0/(2b)}_{+}(\Omega),
H^{\sigma_1,\sigma_1/(2b)}_{+}(\Omega)\bigr]_{\psi}=
H^{\sigma,\sigma/(2b);\varphi}_{+}(\Omega)
\end{equation*}
and
\begin{align*}
[&\mathcal{H}^{\sigma_0-2m,(\sigma_0-2m)/(2b)}_{+}(\Omega,S),
\mathcal{H}^{\sigma_1-2m,(\sigma_1-2m)/(2b)}_{+}(\Omega,S)]_{\psi}\\
&=\bigl[H^{\sigma_0-2m,(\sigma_0-2m)/(2b)}_{+}(\Omega),
H^{\sigma_1-2m,(\sigma_1-2m)/(2b)}_{+}(\Omega)\bigr]_{\psi}\\
&\quad\oplus\bigoplus_{j=1}^{m}
\bigl[H^{\sigma_0-m_j-1/2,(\sigma_0-m_j-1/2)/(2b)}_{+}(S),
H^{\sigma_1-m_j-1/2,(\sigma_1-m_j-1/2)/(2b)}_{+}(S)\bigr]_{\psi}\\
&=H^{\sigma-2m,(\sigma-2m)/(2b);\varphi}_{+}(\Omega)\oplus
\bigoplus_{j=1}^{m}H^{\sigma-m_j-1/2,(\sigma-m_j-1/2)/(2b);\varphi}_{+}(S)\\
&=\mathcal{H}^{\sigma-2m,(\sigma-2m)/(2b);\varphi}_{+}(\Omega,S).
\end{align*}
These equalities of spaces hold up to equivalence of norms. Thus, the isomorphism \eqref{9f8.2} becomes \eqref{9f4.1}. This isomorphism is an extension by continuity of the mapping \eqref{9f2.4} because the set $C^{\infty}_{+}(\overline{\Omega})$ is dense in the space $H^{\sigma,\sigma/(2b);\varphi}_{+}(\Omega)$.
\end{proof}

\begin{proof}[Proof of Theorem $\ref{9th4.3}$.]
We will first prove that, under the conditions \eqref{9f4.5} and \eqref{9f4.6} of this theorem, the implication
\begin{equation}\label{9f8.3}
u\in H^{\sigma-\lambda,(\sigma-\lambda)/(2b);\varphi}_{+,\mathrm{loc}}
(\omega,\pi_1)\;\Rightarrow\;u\in H^{\sigma-\lambda+1,(\sigma-\lambda+1)/(2b);\varphi}_{+,\mathrm{loc}}
(\omega,\pi_1)
\end{equation}
holds for each integer $\lambda\geq1$ subject to $\sigma-\lambda+1>\sigma_{0}$.

We arbitrarily choose a function $\chi\in C^\infty(\overline\Omega)$ with $\mbox{supp}\,\chi\subseteq\omega\cup\pi_1$. For $\chi$ there exists a function $\eta\in C^\infty(\overline\Omega)$ such that $\mbox{supp}\,\eta\subseteq\omega\cup\pi_1$ and
$\eta=1$ in a neighbourhood of $\mbox{supp}\,\chi$. Interchanging each of the differential operators $A$ and $B_{j}$ with the operator of the multiplication by $\chi$, we can write
\begin{equation}\label{9f8.4}
\begin{aligned}
(A,B)(\chi u)&=(A,B)(\chi\eta u)=\chi\,(A,B)(\eta u)+ (A',B')(\eta u)\\
&=\chi\,(A,B)u+(A',B')(\eta u)=\chi\,(f,g_{1},...,g_{m})+(A',B')(\eta u).
\end{aligned}
\end{equation}
Here, $(A',B'):=(A',B'_{1},\ldots,B'_{m})$ is a differential operator
with components
\begin{equation}\label{9f8.5}
A'(x,t,D_x,\partial_t)=\sum_{|\alpha|+2b\beta\leq 2m-1}a^{\alpha,\beta}_{1}(x,t)\,D^\alpha_x\partial^\beta_t
\end{equation}
and
\begin{equation}\label{9f8.6}
B_{j}'(x,t,D_x,\partial_t)=\sum_{|\alpha|+2b\beta\leq m_j-1}
b_{j,1}^{\alpha,\beta}(x,t)\,D^\alpha_x\partial^\beta_t,\quad j=1,\ldots,m,
\end{equation}
where all $a^{\alpha,\beta}_{1}\in C^{\infty}(\overline{\Omega})$ and $b_{j,1}^{\alpha,\beta}\in C^{\infty}(\overline{S})$. This operator acts continuously between the spaces
\begin{equation}\label{9f8.7}
(A',B'):\,H^{s,s/(2b);\varphi}_{+}(\Omega)\rightarrow
\mathcal{H}^{s+1-2m,(s+1-2m)/(2b);\varphi}_{+}(\Omega,S)
\end{equation}
for every $s>\sigma_{0}-1$. In the case where $\varphi\equiv1$ and where the second superscripts are not half-integer, this follows directly from \eqref{9f8.5}, \eqref{9f8.6}, Lemma~\ref{9lem5.1} and the known properties of the anisotropic Sobolev space $H^{s,s/(2b)}(\Omega)$ (see, e.g., \cite[Chapter~I, Lemma~4, and Chapter~II, Theorems~3 and~7]{Slobodeckii58}). The boundedness of the operator \eqref{9f8.7} in the general situation is plainly deduced from this case with the help of the interpolation Lemma~\ref{9lem7.3}.

By the conditions \eqref{9f4.5} and \eqref{9f4.6}, we obtain the inclusion
$$
\chi\,(f,g_{1},...,g_{m})
\in\mathcal{H}^{\sigma-2m,(\sigma-2m)/(2b);\varphi}_{+}(\Omega,S).
$$
Besides, according to \eqref{9f8.7} with $s:=\sigma-\lambda$, we have the implication
\begin{align*}
u&\in H^{\sigma-\lambda,(\sigma-\lambda)/(2b);\varphi}_{+,\mathrm{loc}}
(\omega,\pi_{1})\\
&\Rightarrow\;(A',B')(\eta u)\in
\mathcal{H}^{\sigma-\lambda+1-2m,(\sigma-\lambda+1-2m)/(2b);\varphi}_{+}
(\Omega,S).
\end{align*}
Hence, using \eqref{9f8.4} and Corollary~\ref{9cor4.2}, we can write
\begin{align*}
u&\in H^{\sigma-\lambda,(\sigma-\lambda)/(2b);\varphi}_{+,\mathrm{loc}}
(\omega,\pi_{1})\\
&\Rightarrow\;(A,B)(\chi u)\in
\mathcal{H}^{\sigma-\lambda+1-2m,(\sigma-\lambda+1-2m)/(2b);\varphi}_{+}
(\Omega,S)\\
&\Rightarrow\;\chi u\in
H^{\sigma-\lambda+1,(\sigma-\lambda+1)/(2b);\varphi}_{+}(\Omega).
\end{align*}
Note that Corollary~\ref{9cor4.2} is applicable here because $\chi u\in H^{\sigma_0,\sigma_0/(2b)}_{+}(\Omega)$ by the condition of the theorem and because $\sigma-\lambda+1>\sigma_{0}$. Thus, owing to our choice of $\chi$, we have proved the implication \eqref{9f8.3}.

Let us use this implication in our proof of the inclusion $u\in H^{\sigma,\sigma/(2b);\varphi}_{+,\mathrm{loc}}(\omega,\pi_1)$. We will separately examine the case of $\sigma\notin\mathbb{Z}$ and the case of $\sigma\in\mathbb{Z}$.

Consider first the case of $\sigma\notin\mathbb{Z}$. In this case, there exists an integer $\lambda_{0}\geq1$ such that
\begin{equation}\label{9f8.8}
\sigma-\lambda_{0}<\sigma_{0}<\sigma-\lambda_{0}+1.
\end{equation}
Using the implication \eqref{9f8.3} successively with $\lambda:=\lambda_{0}$,
$\lambda:=\lambda_{0}-1$,..., and $\lambda:=1$, we deduce the desired inclusion in the following way:
\begin{align*}
u&\in H^{\sigma_0,\sigma_0/(2b)}_{+}(\Omega)\subset
H^{\sigma-\lambda_{0},(\sigma-\lambda_{0})/(2b);\varphi}_{+}(\Omega)\\
&\Rightarrow\;u\in
H^{\sigma-\lambda_{0}+1,(\sigma-\lambda_{0}+1)/(2b);\varphi}_{+}(\Omega)\\
&\Rightarrow\;\ldots\;\Rightarrow\;u\in H^{\sigma,\sigma/(2b);\varphi}_{+}(\Omega).
\end{align*}
Note that $u\in H^{\sigma_0,\sigma_0/(2b)}_{+}(\Omega)$ by the condition of the theorem.

Consider now the case of $\sigma\in\mathbb{Z}$. In this case, there is no integer $\lambda_{0}$ that satisfies~\eqref{9f8.8}. Nevertheless, since $\sigma-1/2\notin\mathbb{Z}$ and $\sigma-1/2>\sigma_{0}$, the inclusion
\begin{equation*}
u\in H^{\sigma-1/2,(\sigma-1/2)/(2b);\varphi}_{+}(\Omega)
\end{equation*}
holds true as we have proved in the previous paragraph. Hence,
using the implication \eqref{9f8.3} with $\lambda:=1$, we deduce the desired inclusion; namely:
\begin{align*}
u\in H^{\sigma-1/2,(\sigma-1/2)/(2b);\varphi}_{+}(\Omega)\subset
H^{\sigma-1,(\sigma-1)/(2b);\varphi}_{+}(\Omega)
\;\Rightarrow\;u\in H^{\sigma,\sigma/(2b);\varphi}_{+}(\Omega).
\end{align*}
\end{proof}

To deduce Theorem~\ref{9th4.4} from Theorem~\ref{9th4.3} and to justify Remark~\ref{9rem4.5}, we use a version of H\"ormander's embedding theorem \cite[Theorem~2.2.7]{Hormander63}.

\begin{lemma}\label{9lem8.1}
Let $p\in\mathbb{Z}$, $p\geq0$, $s:=p+b+n/2$, and $\varphi\in\mathcal{M}$. The following two assertions are true:
\begin{itemize}
\item[(i)] If $\varphi$ satisfies \eqref{9f4.7}, then every function $w\in H^{s,s/(2b);\varphi}(\mathbb{R}^{n+1})$ has the following property: all its generalized partial derivatives $D_{x}^{\alpha}\partial_{t}^{\beta}w(x,t)$ with $0\leq|\alpha|+2b\beta\leq p$ are continuous on $\mathbb{R}^{n+1}$.
\item[(ii)] Let $V$ be a nonempty open subset of $\mathbb{R}^{n+1}$, and let an integer $k$ be such that $1\leq k\leq n$. If every function $w\in H^{s,s/(2b);\varphi}(\mathbb{R}^{n+1})$ with $\mathrm{supp}\,w\subset V$ satisfies the condition $\partial_{k}^{j}w\in C(\mathbb{R}^{n+1})$ for each $j\in\mathbb{Z}$ with $0\leq j\leq p$, then $\varphi$ meets \eqref{9f4.7}. Here, $\partial_{k}^{j}w$ denotes the generalized partial derivative $(\partial^{k}w)/\partial{x_{k}^{j}}$ of the function $w=w(x_{1},\ldots,x_{n},t)$.
\end{itemize}
\end{lemma}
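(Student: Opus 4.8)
The plan is to recognize this statement as the anisotropic, $\varphi$-refined counterpart of H\"ormander's embedding theorem \cite[Theorem~2.2.7]{Hormander63} and to reduce both assertions to a single elementary integral computation. Throughout, write $\zeta=(\xi,\eta)$ and recall that here $\gamma=1/(2b)$, so that $r_\gamma(\xi,\eta)=(1+|\xi|^2+|\eta|^{1/b})^{1/2}$ and the weight of the space is $k(\zeta):=r_\gamma^{s}(\zeta)\,\varphi(r_\gamma(\zeta))$. The decisive geometric fact is anisotropic homogeneity: under the dilation $(\xi,\eta)\mapsto(R\xi,R^{2b}\eta)$ the quantity $r_\gamma$ scales like $R$, the Jacobian equals $R^{n+2b}$, and each monomial $\xi^\alpha\eta^\beta$ acquires the factor $R^{|\alpha|+2b\beta}$. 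Consequently $|\xi^\alpha\eta^\beta|\lesssim r_\gamma^{\,|\alpha|+2b\beta}\le r_\gamma^{\,p}$ whenever $|\alpha|+2b\beta\le p$, and for a nonnegative radial weight $F$ the anisotropic polar-coordinate reduction gives
\[
\int_{\mathbb{R}^{n+1}}F(r_\gamma(\zeta))\,|\xi^\alpha\eta^\beta|^{2}\,d\zeta
\asymp\int_{1}^{\infty}F(r)\,r^{\,2(|\alpha|+2b\beta)}\,r^{\,n+2b-1}\,dr .
\]
Establishing this reduction (and the boundedness of $\xi_k^{2p}$ on the anisotropic unit sphere) is the one genuinely computational point underlying everything.

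For assertion~(i) I would note that for $w\in H^{s,s/(2b);\varphi}(\mathbb{R}^{n+1})$ the transform $\widetilde w$ is locally integrable and $k\widetilde w\in L_2$. For $|\alpha|+2b\beta\le p$ the Fourier transform of $D_x^\alpha\partial_t^\beta w$ is $\xi^\alpha\eta^\beta\widetilde w$ up to unimodular factors, and Cauchy--Schwarz yields
\[
\int_{\mathbb{R}^{n+1}}|\xi^\alpha\eta^\beta\,\widetilde w|\,d\zeta
\le\Bigl(\int_{\mathbb{R}^{n+1}}\frac{|\xi^\alpha\eta^\beta|^{2}}{k^{2}(\zeta)}\,d\zeta\Bigr)^{1/2}
\|w\|_{H^{s,s/(2b);\varphi}(\mathbb{R}^{n+1})}.
\]
Using $|\xi^\alpha\eta^\beta|^2\lesssim r_\gamma^{2p}$ together with $2s=2p+2b+n$, the displayed reduction turns the first factor into $\int_1^\infty r^{2p-2s}\varphi^{-2}(r)\,r^{n+2b-1}\,dr=\int_1^\infty \frac{dr}{r\varphi^2(r)}$, which is finite by \eqref{9f4.7}. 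Hence $\xi^\alpha\eta^\beta\widetilde w\in L_1(\mathbb{R}^{n+1})$, so by Fourier inversion and the Riemann--Lebesgue lemma the distribution $D_x^\alpha\partial_t^\beta w$ has a continuous (indeed bounded and vanishing at infinity) representative, proving~(i).

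For assertion~(ii) I would argue by contraposition and build one counterexample. Assuming $\int_1^\infty \frac{dr}{r\varphi^2(r)}=\infty$, the key elementary step is to produce $\vartheta\ge0$ on $[1,\infty)$ with $\int_1^\infty \vartheta^2(r)\,\frac{dr}{r}<\infty$ but $\int_1^\infty \frac{\vartheta(r)}{r\varphi(r)}\,dr=\infty$; writing $\Phi(r):=\int_1^r \frac{dt}{t\varphi^2(t)}$ (so $\Phi\to\infty$ and $\Phi(1)=0$), the choice $\vartheta:=\bigl(\varphi\,(1+\Phi)\bigr)^{-1}$ works, since the two integrals become $\int_1^\infty \frac{\Phi'}{(1+\Phi)^2}=1$ and $\int_1^\infty \frac{\Phi'}{1+\Phi}=\infty$. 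I then define a model distribution $w_0$ by $\widetilde{w_0}(\xi,\eta):=(-i)^p\,\xi_k^{\,p}\,H(r_\gamma(\xi,\eta))$, where $H(r):=r^{-2p-n-2b}\varphi^{-1}(r)\vartheta(r)$ (note $r_\gamma\ge1$, so $H$ is defined everywhere). The polar reduction shows on one hand that $w_0\in H^{s,s/(2b);\varphi}(\mathbb{R}^{n+1})$, because $\int r_\gamma^{2s}\varphi^2\,\xi_k^{2p}H^2\,d\zeta\asymp\int_1^\infty \vartheta^2\,\frac{dr}{r}<\infty$, and on the other hand that $\widehat{\partial_k^{\,p}w_0}=\xi_k^{2p}H(r_\gamma)\ge0$ with $\int \xi_k^{2p}H(r_\gamma)\,d\zeta\asymp\int_1^\infty \frac{\vartheta}{r\varphi}\,dr=\infty$. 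Since a tempered distribution whose Fourier transform is nonnegative and nonintegrable cannot be continuous at the origin (its Gaussian regularizations satisfy $g_\varepsilon(0)\to\int\widehat g=\infty$, contradicting the approximate-identity limit at a point of continuity), the derivative $\partial_k^{\,p}w_0$ is discontinuous at $0$.

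Finally, to meet the support requirement I would translate so that $0\in V$, fix $\rho\in C_0^\infty(V)$ with $\rho\equiv1$ near $0$, and set $w:=\rho w_0$. Then $\mathrm{supp}\,w\subset V$; multiplication by $\rho$ is bounded on $H^{s,s/(2b);\varphi}(\mathbb{R}^{n+1})$ (it is bounded on any two anisotropic Sobolev endpoints $H^{s_0,s_0/(2b)}(\mathbb{R}^{n+1})$ and $H^{s_1,s_1/(2b)}(\mathbb{R}^{n+1})$, hence on the interpolation space they produce, which is $H^{s,s/(2b);\varphi}(\mathbb{R}^{n+1})$ by Lemma~\ref{9lem7.1}), so $w$ lies in the space; and since $\rho\equiv1$ near $0$, the derivative $\partial_k^{\,p}w$ coincides with $\partial_k^{\,p}w_0$ there and is therefore discontinuous at $0$. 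This contradicts the hypothesis of~(ii) taken with $j=p$, so $\varphi$ must satisfy \eqref{9f4.7}. I expect the construction of this counterexample --- in particular arranging nonnegativity of $\widehat{\partial_k^{\,p}w_0}$ simultaneously with membership in the space and the localization of support --- to be the main obstacle, whereas the continuity half~(i) is comparatively routine once the anisotropic polar reduction is in place.
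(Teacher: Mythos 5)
Your part~(i) coincides in substance with the paper's proof: Cauchy--Schwarz against the weight $k=r_\gamma^{s}\varphi(r_\gamma)$, then reduction of the weight integral to a one-dimensional integral in $r$. The paper carries out the reduction exactly (the substitution $\eta=\eta_1^{2b}$, spherical coordinates, $r=(1+\varrho^2)^{1/2}$), arriving at the identity \eqref{9f8.10} and the equivalence \eqref{9f8.11}, whereas you argue by anisotropic-dilation comparability; both are fine, and your crude bound $|\xi^\alpha\eta^\beta|^2\lesssim r_\gamma^{2p}$ suffices because the exponent bookkeeping $2p-2s+n+2b-1=-1$ is exactly the paper's computation in the worst case $\delta(\alpha,\beta)=0$.

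For part~(ii) you take a genuinely different route. The paper constructs nothing: it invokes the necessity half of H\"ormander's embedding theorem \cite[Theorem 2.2.7]{Hormander63} --- in essence a closed-graph argument applied to the point-evaluation functionals $w\mapsto(\partial_k^{p}w)(x_0)$ on elements supported in $V$ --- to conclude directly that $\xi_k^{p}/k\in L_2$, and then feeds this into \eqref{9f8.10} with $\delta(\alpha,\beta)=0$ to obtain \eqref{9f4.7} via \eqref{9f8.11}. Your contrapositive instead manufactures an explicit counterexample, and the main pieces are correct: the choice $\vartheta=(\varphi(1+\Phi))^{-1}$ does give $\int_1^\infty\vartheta^2\,dr/r=1$ while $\int_1^\infty\vartheta/(r\varphi)\,dr=\infty$; the model $w_0$ lies in $H^{s,s/(2b);\varphi}(\mathbb{R}^{n+1})$ and has $\widehat{\partial_k^{p}w_0}=\xi_k^{2p}H(r_\gamma)\geq0$ nonintegrable (the angular average of $\omega_k^{2p}$ against the weight $\omega_{n+1}^{2b-1}$ is strictly positive, so your lower bound in the polar reduction is legitimate); and localizing by $\rho$ is justified, since multiplication by a $C_0^\infty$ function is bounded on integer-order anisotropic Sobolev endpoints by Leibniz and hence, by Lemma~\ref{9lem7.1} and interpolation, on the H\"ormander space. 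What your route buys is a self-contained, constructive proof exhibiting an explicit singular solution rather than outsourcing the functional-analytic step; what it costs is the extra multiplier and positive-definiteness machinery.

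One step needs patching. Your justification of the obstruction --- ``Gaussian regularizations satisfy $g_\varepsilon(0)\to\int\widehat g=\infty$, contradicting the approximate-identity limit at a point of continuity'' --- is airtight only if $g$ is known to be a continuous function globally with controlled growth. In your setup the hypothesis of~(ii) yields global continuity of $\partial_k^{p}w=\partial_k^{p}(\rho w_0)$, whose Fourier transform is \emph{not} nonnegative, while nonnegativity holds for $g_0:=\partial_k^{p}w_0$, which you only know agrees with a continuous function \emph{near} the origin; a Gaussian test function sees $g_0$ globally, where you control nothing. The fix is standard: replace the Gaussian by a Fej\'er-type kernel. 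Take $\eta\in C_0^\infty$ with $\widehat\eta(0)\neq0$, set $h:=\eta*\eta^{-}$ with $\eta^{-}(x):=\overline{\eta(-x)}$, so that $h$ has compact support and $\widehat h=|\widehat\eta|^{2}\geq0$ with $\widehat h(0)>0$. Then for small $\varepsilon$ the pairing $\langle g_0,h_\varepsilon\rangle$ involves $g_0$ only on the neighbourhood where it is continuous, hence converges to a finite limit, while on the Fourier side Fatou's lemma gives $\liminf_{\varepsilon\to0}\langle g_0,h_\varepsilon\rangle\geq\widehat h(0)\int\widehat{g_0}=\infty$, a contradiction. With that substitution your argument is complete.
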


\begin{proof} (i) Given a function $w\in H^{s,s/(2b);\varphi}(\mathbb{R}^{n+1})$, we consider its arbitrary partial derivative $w_{\alpha,\beta}(x,t):=D_{x}^{\alpha}\partial_{t}^{\beta}w(x,t)$ with $0\leq|\alpha|+2b\beta\leq p$. The condition
\begin{equation}\label{9f8.9}
\int\limits_{\mathbb{R}^{n}}\int\limits_{\mathbb{R}}
\frac{|\xi^{\alpha}|^{2}\,|\eta|^{2\beta}\,d\xi\,d\eta}
{r_{\gamma}^{2s}(\xi,\eta)\,\varphi^{2}(r_{\gamma}(\xi,\eta))}<\infty
\end{equation}
implies the inclusion $w_{\alpha,\beta}\in C(\mathbb{R}^{n+1})$; here,  $\gamma:=1/(2b)$. Indeed, by the Schwarz inequality, we can write
\begin{equation*}
\int\limits_{\mathbb{R}^{n}}\int\limits_{\mathbb{R}}
|\widetilde{w_{\alpha,\beta}}(\xi,\eta)|\,d\xi\,d\eta\leq
\|w\|_{H^{s,s/(2b);\varphi}(\mathbb{R}^{n+1})}
\int\limits_{\mathbb{R}^{n}}\int\limits_{\mathbb{R}}
\frac{|\xi^{\alpha}|^{2}\,|\eta|^{2\beta}\,d\xi\,d\eta}
{r_{\gamma}^{2s}(\xi,\eta)\,\varphi^{2}(r_{\gamma}(\xi,\eta))}.
\end{equation*}
Hence, if the condition \eqref{9f8.9} is fulfilled, then the function $\widetilde{w_{\alpha,\beta}}$ is integrable over $\mathbb{R}^{n+1}$ and therefore its inverse Fourier transform $w_{\alpha,\beta}$ is continuous on $C(\mathbb{R}^{n+1})$.

Let us show that \eqref{9f4.7} implies this condition. In the multiple integral written in \eqref{9f8.9}, we change the variable $\eta=\eta_1^{2b}$, then pass to the spherical coordinates $(\varrho,\theta_{1},\ldots,\theta_{n})$ with $\varrho=(|\xi|^2+\eta_1^2)^{1/2}$, and finally change the variable $r=(1+\varrho^2)^{1/2}$. So, we write the following:
\begin{align*}
\int\limits_{\mathbb{R}^{n}}&\int\limits_{\mathbb{R}}
\frac{|\xi^{\alpha}|^{2}\,|\eta|^{2\beta}d\xi\,d\eta}
{r_{\gamma}^{2s}(\xi,\eta)\,\varphi^{2}(r_{\gamma}(\xi,\eta))}=
2^{n+1}\int\limits_{0}^{\infty}\dots
\int\limits_{0}^{\infty}\int\limits_{0}^{\infty}
\frac{|\xi^{\alpha}|^{2}\,\eta^{2\beta}d\xi\,d\eta}
{r_{\gamma}^{2s}(\xi,\eta)\,\varphi^{2}(r_{\gamma}(\xi,\eta))}\\
&=2^{n+1}\int\limits_{0}^{\infty}\dots
\int\limits_{0}^{\infty}\int\limits_{0}^{\infty}
\frac{2b\,|\xi^{\alpha}|^{2}\,\eta_1^{4b\beta+2b-1}\,d\xi\,d\eta_1}
{(1+|\xi|^2+\eta_1^2)^{s}
\varphi^2\bigl(\sqrt{1+|\xi|^2+\eta_1^2}\,\bigr)}\\
&=c_{\alpha,\beta}\int\limits_{0}^{\infty}
\frac{\varrho^{2|\alpha|+4b\beta+2b-1}\,\varrho^{n}\,d\varrho}
{(1+\varrho^2)^{s}\varphi^2(\sqrt{1+\varrho^2}\,)}=
c_{\alpha,\beta}\int\limits_{1}^{\infty}
\frac{(r^2-1)^{|\alpha|+2b\beta+b-1/2+n/2}\,r\,dr}{r^{2s}\,\varphi^2(r)\,
(r^2-1)^{1/2}}\\
&=c_{\alpha,\beta}\int\limits_{1}^{\infty}\;
\frac{(r^2-1)^{s-1-\delta(\alpha,\beta)}}{r^{2s-1}\,\varphi^2(r)}\,dr;
\end{align*}
here, $c_{\alpha,\beta}$ is a certain positive number, and
\begin{equation*}
\delta(\alpha,\beta):=p-|\alpha|-2b\beta\in[0,p].
\end{equation*}
Thus,
\begin{equation}\label{9f8.10}
\int\limits_{\mathbb{R}^{n}}\int\limits_{\mathbb{R}}
\frac{|\xi^{\alpha}|^{2}\,|\eta|^{2\beta}\,d\xi\,d\eta}
{r_{\gamma}^{2s}(\xi,\eta)\,\varphi^{2}(r_{\gamma}(\xi,\eta))}=
c_{\alpha,\beta}\int\limits_{1}^{\infty}\;
\frac{(r^2-1)^{s-1-\delta(\alpha,\beta)}}{r^{2s-1}\,\varphi^2(r)}\,dr.
\end{equation}
Note that
\begin{equation}\label{9f8.11}
\eqref{9f4.7}\;\;\Leftrightarrow\;\;\int\limits_{1}^{\infty}\;
\frac{(r^2-1)^{s-1}}{r^{2s-1}\,\varphi^2(r)}\,dr
<\infty
\end{equation}
Therefore \eqref{9f4.7} implies \eqref{9f8.9}. Assertion (i) is proved.

(ii) Let the assumption made in assertion (ii) be fulfilled. Following H\"ormander's proof of the necessity part of his embedding theorem \cite[Theorem 2.2.7]{Hormander63}, we conclude that
\begin{equation*}
\int\limits_{\mathbb{R}^{n}}\int\limits_{\mathbb{R}}
\frac{|\xi_{k}^{p}|^{2}\,d\xi\,d\eta}
{r_{\gamma}^{2s}(\xi,\eta)\,\varphi^{2}(r_{\gamma}(\xi,\eta))}<\infty.
\end{equation*}
This property together with \eqref{9f8.10} gives
\begin{equation}\label{9f8.12}
\int\limits_{1}^{\infty}\;\frac{(r^2-1)^{s-1}}
{r^{2s-1}\,\varphi^2(r)}\,dr<\infty.
\end{equation}
Here, we use \eqref{9f8.10} in the case where $\alpha_{k}=p$, $\alpha_{q}=0$ whenever $q\neq k$, and $\beta=0$, then $\delta(\alpha,\beta)=0$. Now \eqref{9f4.7} follows from \eqref{9f8.11} and \eqref{9f8.12}. Assertion (ii) is proved.
\end{proof}

\begin{proof}[Proof of Theorem $\ref{9th4.4}$.]
We arbitrarily choose a point $M\in\omega\cup\pi_1$. Let a function $\chi\in C^\infty(\overline\Omega)$ satisfy the following conditions:
$\mbox{supp}\,\chi\subseteq\omega\cup\pi_1$, and $\chi=1$ in a certain neighbourhood $V(M)\subseteq\overline{\Omega}$ of $M$. According to Theorem~\ref{9th4.3} we have the inclusion $\chi u\in H^{\sigma,\sigma/(2b);\varphi}_{+}(\Omega)$, with $\sigma=p+b+n/2$. Hence, there exists a function $w\in H^{\sigma,\sigma/(2b);\varphi}(\mathbb{R}^{n+1})$ such that $w=\chi
u=u$ on the set $V(M)$. According to Lemma~\ref{9lem8.1}(i), each generalized partial derivative $D_{x}^{\alpha}\partial_{t}^{\beta}w(x,t)$, with $0\leq|\alpha|+2b\beta\leq p$, is continuous on $\mathbb{R}^{n+1}$. Thus, the derivative $D_{x}^{\alpha}\partial_{t}u(x,t)$ is continuous in the neighbourhood $V(M)$ of $M$. Since the point $\nobreak{M\in\omega\cup\pi_1}$ is arbitrarily chosen, this derivative is continuous on $\omega\cup\pi_1$.
\end{proof}

At the end of this section, we will justify Remark~\ref{9rem4.5}. Let $\varphi\in\mathcal{M}$, and let an integer $p\geq0$ be subject to  $p+b+n/2>\sigma_{0}$. Assume that every solution $u\in\nobreak H^{\sigma_0,\sigma_0/(2b)}_{+}(\Omega)$ to the problem \eqref{9f2.1}--\eqref{9f2.3} whose right-hand sides satisfy conditions
\eqref{9f4.5} and \eqref{9f4.6} for $\sigma:=p+b+n/2$ complies with the conclusion of Theorem~\ref{9th4.4}. Then, for arbitrary function $u\in H^{\sigma,\sigma/(2b);\varphi}_{+}(\Omega)$, we put
\begin{equation*}
(f,g_{1},...,g_{m}):=(A,B)u\in
\mathcal{H}^{\sigma-2m,(\sigma-2m)/(2b);\varphi}_{+}(\Omega,S)
\end{equation*}
and can assert that $u$ meets the conclusion of Theorem~\ref{9th4.4}. Thus, if the function $u$ belongs to $H^{\sigma,\sigma/(2b);\varphi}_{+}(\Omega)$, then all its generalized derivatives $D_{x}^{\alpha}\partial_{t}^{\beta}u(x,t)$ with $|\alpha|+2b\beta\leq p$ are continuous on $\omega\cup\pi_1$. Specifically, each derivative $\partial_{1}^{j}u$ with $0\leq j\leq p$ is continuous on $\omega\cup\pi_1$.

Let $V$ be a nonempty open subset of $\mathbb{R}^{n+1}$ such that $\overline{V}\subset\omega$. We arbitrarily choose a function $w\in H^{\sigma,\sigma/(2b);\varphi}(\mathbb{R}^{n+1})$ such that $\mathrm{supp}\,w\subset V$, and we put
\begin{equation*}
u:=w\!\upharpoonright\!\Omega\in H^{\sigma,\sigma/(2b);\varphi}_{+}(\Omega).
\end{equation*}
The function $w$ and its generalized derivatives $\partial_{1}^{j}w$ with $1\leq j\leq p$ are continuous on $\mathbb{R}^{n+1}$ due to the property of $u$ deduced in the previous paragraph under the assumption made. Hence, $\varphi$ satisfies \eqref{9f4.7} due to Lemma~\ref{9lem8.1}(ii). Thus, we have justified Remark~\ref{9rem4.5}.

\end{document}